\renewcommand{\baselinestretch}{1.2}
\newcommand{\single}{\let\CS=\@currsize\renewcommand{\baselinestretch}{1.1}\tiny\CS}
\newcommand{\singb}{\let\CS=\@currsize\renewcommand{\baselinestretch}{1}\tiny\CS}
\newcommand{\singa}{\let\CS=\@currsize\renewcommand{\baselinestretch}{1.2}\tiny\CS}
\newcommand{\oneandahalfspacing}{\let\CS=\@currsize\renewcommand{\baselinestretch}{1.5}\tiny\CS}
\newcommand{\singlespacing}{\let\CS=\@currsize\renewcommand{\baselinestretch}{1.6}\large\CS}
\newcommand{\bc}{\begin{center}}
	\newcommand{\ec}{\end{center}}
\newcommand{\be}{\begin{eqnarray}}
	\newcommand{\ee}{\end{eqnarray}}
\newcommand{\Hom}{\operatorname{Hom}}
\newcommand{\Ext}{\operatorname{Ext}}
\newcommand{\Dim}{\operatorname{dim}}
\newcommand{\Irr}{\operatorname{Irr}}
\newcommand{\Ind}{\operatorname{Ind}}
\newcommand{\ind}{\operatorname{ind}}
\newcommand{\diag}{\operatorname{diag}}
\newcommand{\Alg}{\operatorname{Alg}}
\newcommand{\N}{\operatorname{N}}
\newcommand{\Te}{\operatorname{T}}
\newcommand{\beano}{\begin{eqnarray*}}
	\newcommand{\eeano}{\end{eqnarray*}}
\newcommand{\ba}{\begin{array}}
	\newcommand{\ea}{\end{array}}
\newtheorem{thm}{Theorem}[section]
\newtheorem{lem}[thm]{Lemma}
\newtheorem{prop}[thm]{Proposition}
\theoremstyle{definition}
\newtheorem{defn}{Definition}[section]
\newtheorem{rem}{Remark}[section]
\newtheorem{exa}{Example}[section]
\numberwithin{equation}{section}
\DeclareMathOperator{\D}{D}
\DeclareMathOperator{\K}{k}
\DeclareMathOperator{\GL}{GL}
\DeclareMathOperator{\Sp}{Sp}
\DeclareMathOperator{\Rad}{Rad}
\begin{document}
	
	\title[SYMPLECTIC PERIOD FOR A REPRESENTATION OF $\GL_n(\D)$]{SYMPLECTIC PERIOD FOR A REPRESENTATION OF $\GL_n(\D)$}
	
	
	\author[HARIOM SHARMA]{HARIOM SHARMA}
	\address{Department of Mathematics, Indian Institute of Technology Roorkee, Uttarakhand, 247667, Bharat (India)}
	\email{hariom\textunderscore s@ma.iitr.ac.in}

	
	\author[MAHENDRA KUMAR VERMA]{MAHENDRA KUMAR VERMA}
	\address{Department of Mathematics, Indian Institute of Technology Roorkee, Uttarakhand, 247667,  Bharat (India)}
	\email{mahendraverma@ma.iitr.ac.in}

	\subjclass[2020]{primary 22E50; secondary 11F70.}
	\keywords{distinguished representations, division algebra,  symplectic period.}

	\begin{abstract}
		Let $\D$ be a quaternion division algebra over a non-archimedean local field $\K$ of characteristic zero, and let  $\Sp_n(\D)$ be the unique non-split inner form of the symplectic group $\Sp_{2n}(\K)$.
		This paper classifies the irreducible admissible representations of $\GL_{n}(\D)$ with a symplectic period for $n = 3$ and $4$, i.e., those irreducible admissible representations $(\pi,V)$ of $\GL_{n}(\D)$ which have a linear functional $l$ on $V$ such that $l(\pi(h)v) = l(v)$ for all $v \in V$ and $h \in \Sp_n(\D)$.
		Our results also contain all unitary representations having a symplectic period, as stated in Prasad's conjecture.
	\end{abstract}
	
	\maketitle

	\section{Introduction}
	Let $\D$ be a quaternion division algebra over a non-archimedean local field $\K$ of characteristic zero.
	Let $G = \GL_n(\D)$ and $H = \Sp_n(\D)$, a symplectic subgroup of $G$ which is the unique non-split inner form of the symplectic group $\Sp_{2n}(\K)$. A complex representation $(\pi,V)$ of $G$ is said to have a symplectic period (or to be $H$-distinguished) if there exists a linear functional $l$ on $V$ such that $l(\pi(h)v) = l(v)$ for all $v \in V$ and $h \in H$, i.e., $\Hom_{H}(\pi|_{H}, \mathbb{C}) \not=0$, where $\mathbb{C}$ denotes the trivial representation of $H$. This paper provides a complete list of irreducible admissible representations of $\GL_3(\D)$ and $\GL_4(\D)$ with a symplectic period.
	
	The importance of $H$-distinguished representations emerges prominently in the harmonic analysis of the homogeneous space $G/H$. Within the global theory of period integrals of automorphic forms, the distinguished representations play a key role in the study of special values of $L$-functions and contribute to the description of the image of functorial lifts in the Langlands program.
	
	The motivation for this problem comes from the work of Klyachko over finite fields \cite{AA84}. He discovered a family of representations $M_{n,r}$; $0 \leq r \leq [\frac{n}{2}]$,  which are disjoint and multiplicity-free, and form a complete model. When $r=0$,  the representation $M_{n,0}$ is known as the Whittaker model, which is of very much importance in number theory and the theory of automorphic representations. 
	When $n$ is even, then the representation $M_{n,\frac{n}{2}}$ is known as the symplectic model. 
	
	Heumos and Rallis \cite{HR90} studied the analogous notion of this problem over a non-archimedean local field $\K$ of characteristic zero. They established the uniqueness of the symplectic model and classified all unitary representations of $\GL_4(\K)$ possessing a symplectic model. Subsequent work by Offen and Sayag extended these classifications to unitary representations of $\GL_{2n}(\K)$, demonstrating the uniqueness of Klyachko models and proving multiplicity one results for irreducible admissible representations \cite{OSE09,OSE07,OSE08}.
	
	The study of irreducible admissible representations of $\GL_4(\K)$ and $\GL_6(\K)$ that possess a symplectic model was conducted by Mitra \cite{mitra2014representations}. Subsequently, Mitra, Offen, and Sayag \cite{mitra2017klyachko} classified ladder representations of $\GL_{2n}(\K)$ with a symplectic model. For the case where $G = \GL_n(\K)$ and $H = \GL_{n-1}(\K)$, the distinction problem has been examined by Venketasubramanian \cite{Venket}. Verma \cite{Verma} explored the problem of the symplectic period over $\D$ and demonstrated its uniqueness.
	\begin{thm}\label{1}
		Let $\pi$ be an irreducible admissible representation of $\GL_n(\D)$. Then \[ \Dim \Hom_{\Sp_n(\D)}(\pi , \mathbb{C}) \leq 1.\]
	\end{thm}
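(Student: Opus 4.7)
The standard route is the Gelfand–Kazhdan criterion applied to the pair $(G,H) = (\GL_{n}(\D),\Sp_{n}(\D))$. The first step is to construct an anti-involution $\sigma : G \to G$ that preserves $H$ and acts as inversion on $H$. A natural candidate is
\[
\sigma(g) \;=\; J^{-1}\,{}^{t}\!\bar{g}\,J,
\]
where $J$ is the matrix of the skew-Hermitian form defining $\Sp_{n}(\D)$ and $\bar{\ }$ is the canonical involution of $\D$ applied entry-wise. One verifies directly that $\sigma^{2}=\mathrm{id}$, that $\sigma$ reverses products, and that $\sigma(h)=h^{-1}$ for $h\in H$, so $\sigma$ restricts to the inversion map on $H$.

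Given such a $\sigma$, the Gelfand trick reduces the multiplicity-one assertion to the distributional claim that every $H$-bi-invariant distribution $T$ on $G$ is $\sigma$-invariant: matrix coefficients of $H$-invariant functionals are $H$-bi-invariant, and $\sigma$-invariance then forces $\pi\cong\pi^{\vee}$ on the subspace of $H$-fixed linear forms, yielding $\Dim\Hom_{H}(\pi,\mathbb{C})\leq 1$. I would establish the $\sigma$-invariance by Bernstein's localisation principle: stratify $G$ by its $H\times H$-orbits, and on each locally closed stratum show that every $H$-bi-invariant distribution is automatically $\sigma$-fixed. The geometric input is that $\sigma$ stabilises every $H\times H$-orbit in $G$, which, via the twisted action $g\mapsto h\,g\,\sigma(h)$, reduces to an orbit analysis on the symmetric variety $S=\{g\in G:\sigma(g)=g\}$ under $H$-conjugation.

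The main obstacle I anticipate is the explicit description of $H\backslash G/H$ over the quaternion algebra. Over a field the orbit space is controlled by Witt invariants of symplectic-Hermitian pairs; over $\D$ one must use the corresponding classification of hermitian forms on right $\D$-modules and track the non-commutativity when computing centralisers. The key technical lemma to prove is that each $H\times H$-orbit contains a $\sigma$-fixed representative with centraliser acting transitively on the $\sigma$-fixed points of the normal slice; once this is in hand, standard homogeneity arguments (Frobenius reciprocity on the strata, together with the fact that $\sigma$ acts as $-1$ on the relevant unipotent normal bundles) rule out non-$\sigma$-invariant contributions on each stratum.

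As an alternative shortcut, the statement of Theorem \ref{1} has already been proved by Verma in \cite{Verma} by exactly this Gelfand-pair strategy, so one may legitimately simply quote \cite{Verma}. I would prefer, however, to record the anti-involution $\sigma$ explicitly, since it will also be needed later in the paper to relate $\Sp_{n}(\D)$-distinction of $\pi$ to the self-duality $\pi\cong\pi^{\sigma}$.
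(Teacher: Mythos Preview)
The paper does not give its own proof of this theorem: it is stated in the introduction immediately after the sentence ``Verma \cite{Verma} explored the problem of the symplectic period over $\D$ and demonstrated its uniqueness,'' and is simply quoted as a result from \cite{Verma}. So your final paragraph---just cite \cite{Verma}---is exactly what the paper does.

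Your Gelfand--Kazhdan sketch is the correct strategy and is indeed the method Verma uses in the cited reference, so there is no substantive divergence. Two minor corrections to align with this paper's conventions: the form defining $\Sp_n(\D)$ here is Hermitian (it satisfies $(v,v') = \tau((v',v))$), not skew-Hermitian, so adjust the description of $J$ accordingly; and the consequence of the anti-involution that the paper actually invokes later is Lemma~\ref{10} (if $\pi$ is $\Sp_n(\D)$-distinguished then so is $\tilde\pi$), rather than a self-duality statement of the form $\pi \cong \pi^{\sigma}$.
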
 
	A natural question now is to classify all irreducible admissible representations of $\GL_n(\D)$ with a symplectic period. The second author studied this problem for $n = 1$ and $2$ in \cite{Verma}. This paper examines the problem of symplectic period for $n = 3$ and $4$, beginning with an analysis of supercuspidal representations, which are known not to exhibit a symplectic period for the supercuspidal representations of $\GL_n(\K)$.
	
    By Conjecture $7.1$ and Proposition $7.3$ in \cite{Verma}, the supercuspidal representations of $\GL_n(\D)$ for any even $n$, are not $\Sp_n(\D)$-distinguished. Therefore, for n even, it is enough to study the problem of the symplectic period for the non-supercuspidal representations of $\GL_n(\D)$. On the other hand, D. Prasad \cite{Verma} constructed some examples of supercuspidal representations of $\GL_n(\D)$ having a symplectic period for any odd $n$. So, it is interesting to study distinguished supercuspidal representation in odd cases that we will consider in the future.
	
	\subsection{Main Theorems} Our main results are summarized in the following theorems (see Section $2$ for unfamiliar notation used here).
	
	\begin{thm}\label{2}
		Let $\theta$ be an irreducible admissible non-supercuspidal representation of $\GL_3(\D)$ with a symplectic period. Then $\theta$ is one of the following:
		\begin{enumerate}
			\item[\upshape(1)] Any character of $\GL_3(\D)$.
			
			\item[\upshape(2)] $\chi_2 St_2 \times \chi_1$, where $\chi_2 St_2$ is a subrepresentation of $\chi_1\nu \times \chi_1\nu^{-1}$; $\chi_1$ is a character of $\GL_1(\D)$.
			
			\item[\upshape(3)]  $\lambda_1 \times \lambda_2$, where $\lambda_1 $ is a character of $\GL_1(\D)$ and $\lambda_2$ is an irreducible  representation of $\GL_2(\D)$ with a symplectic period.
			
		\end{enumerate}
	\end{thm}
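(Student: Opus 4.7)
The plan is to combine the Langlands/Zelevinsky classification of irreducible admissible representations of $\GL_3(\D)$ with a Mackey-type analysis of $\Sp_3(\D)$-invariant linear forms on parabolically induced representations. Since $\theta$ is non-supercuspidal, its supercuspidal support lies on a proper Levi subgroup of $G=\GL_3(\D)$, and so $\theta$ arises as an irreducible subquotient of a standard induced module of the form $\sigma_1\times\sigma_2$ (or $\sigma_1\times\sigma_2\times\sigma_3$), where the inducing parabolic $P$ has Levi either $\GL_2(\D)\times\GL_1(\D)$ or $\GL_1(\D)^3$.

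The main technical tool I would use is the geometric lemma of Bernstein--Zelevinsky applied to the pair $(G,H)$: for each parabolic $P\subset G$ with Levi $M$, the restriction $(\Ind_P^G\sigma)|_H$ admits a filtration indexed by the double cosets $P\backslash G/H$, each subquotient of which has the form $\ind_{H\cap x^{-1}Px}^H(\sigma^x)$ after a suitable twist by modular characters. Applying $\Hom_H(-,\mathbb{C})$ and Frobenius reciprocity then reduces distinction of $\theta$ to a distinction problem for $\sigma$ with respect to the descended subgroup $x^{-1}Hx\cap M$. I would compute the $H$-orbits on $G/P$ explicitly for $P=P_{2,1}$ and for the Borel; in each case the open orbit should yield a diagonal descent to $\Sp_2(\D)\times\{1\}$ (or an analogous subgroup) inside $M$, while the closed or boundary orbits produce supplementary contributions responsible for the distinguished behavior at reducibility points such as $\chi_1\nu\times\chi_1\nu^{-1}$.

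With these computations in hand, the proof proceeds by case analysis on the Zelevinsky parameters of $\theta$. Characters of $\GL_3(\D)$ are trivially distinguished because $\Sp_3(\D)$ lies in the kernel of the reduced norm, yielding case (1). If $\theta=\lambda_1\times\lambda_2$ is a fully irreducible induction from $P_{2,1}$, the open-orbit analysis should give $\Hom_H(\theta,\mathbb{C})\cong\Hom_{\Sp_2(\D)}(\lambda_2,\mathbb{C})$, forcing $\lambda_2$ to be $\Sp_2(\D)$-distinguished as in case (3). At the special reducibility locus $\chi_1\nu\times\chi_1\nu^{-1}\times\chi_1$, careful tracking through the Langlands filtration isolates the unique constituent $\chi_2 St_2\times\chi_1$ that still carries the $H$-invariant form, which accounts for case (2). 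All remaining Langlands quotients and essentially square-integrable constituents of $\GL_3(\D)$ must be ruled out by showing that their Jacquet modules are incompatible with $H$-distinction.

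The main obstacle is precisely this last elimination step: one has to systematically rule out every Zelevinsky parameter lying outside the three families above. This requires combining the central-exponent inequalities from the Langlands data with precise knowledge of the stabilizers $H\cap x^{-1}Px$ on each orbit, and appealing to Theorem~\ref{1} to prevent the inherited $H$-form from producing spurious distinguished constituents. Handling the reducibility points, where several Langlands data collide, demands additional care: one must decide whether the Steinberg-type sub, the Speh-type quotient, or neither carries the $\Sp_3(\D)$-invariant functional, which is precisely where the closed-orbit geometry enters in an essential way.
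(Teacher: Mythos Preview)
Your outline is essentially the paper's own strategy: reduce to parabolic inductions from $P_{2,1}$, analyze the two $\Sp_3(\D)$-orbits on $G/P_{2,1}$ via Mackey theory, and run a case-by-case analysis on the $\GL_2(\D)$-factor $\sigma_1$. Two points deserve correction or sharpening.

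First, the open-orbit stabilizer is $\Sp_2(\D)\times\Sp_1(\D)$, not $\Sp_2(\D)\times\{1\}$; the open-orbit contribution is therefore $\Hom_{\Sp_2(\D)}(\sigma_1,\mathbb{C})\otimes\Hom_{\Sp_1(\D)}(\sigma_2,\mathbb{C})$, and the second factor is what forces the $\GL_1(\D)$-piece to be a character. Your formula $\Hom_H(\theta,\mathbb{C})\cong\Hom_{\Sp_2(\D)}(\lambda_2,\mathbb{C})$ drops this constraint.

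Second, and more seriously, you do not say how to pass from distinction of the induced module $\sigma_1\times\sigma_2$ to distinction of a specific irreducible subquotient, which you yourself flag as the main obstacle. The paper handles this with an $\Ext^1$ computation: when only the open orbit contributes, one needs
\[
\Ext^1_{\Sp_3(\D)}\bigl(\Ind_{P_H}^{\Sp_3(\D)}\nu^3[(\sigma_1\otimes\sigma_2)|_{M_H}],\mathbb{C}\bigr)=0
\]
to know that the open-orbit functional actually extends to the full induced representation (Proposition~\ref{26} and Theorem~\ref{27}). This vanishing is computed via Frobenius reciprocity for $\Ext$ and reduces to $\Ext^1_{\GL_1(\D)}$ between characters, controlled by Lemma~\ref{11}. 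Once one knows $\dim\Hom_{\Sp_3(\D)}(\sigma_1\times\sigma_2,\mathbb{C})=1$ exactly, combining this with the already-known distinction of one subquotient (e.g.\ the character) and the short exact sequence pins down which constituent carries the unique invariant form. Your appeal to Theorem~\ref{1} alone is not enough: multiplicity one for irreducibles does not by itself prevent two distinct subquotients of a length-two module from both being distinguished. The $\Ext^1$ step is the missing ingredient.
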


	\begin{thm}\label{3}
		An  irreducible admissible representation $\theta$ of $\GL_4(\D)$ has a symplectic period if and only if $\theta$ is one of the following:   
		\begin{enumerate}
			
			\item[\upshape(1)]    Any character of $\GL_4(\D)$.

			\item[\upshape(2)]  $\chi_1 \times \chi_1 L([\nu^{-1} ,\nu ],[\nu^{3} ])$, where $\chi_1$ is a character of $\GL_1(\D)$.
			
			\item[\upshape(3)]  $\nu^2 \chi_1 \times \chi_1 L([\nu ,\nu^3 ],[\nu^{-1} ])$, where $\chi_1$ is a character of $\GL_1(\D)$.

			\item[\upshape(4)]   $ \chi_2St_2 \times \nu\chi_2St_2$, where $\chi_2 St_2$ is the Steinberg representation of $\GL_2(\D)$.
			
			\item[\upshape(5)]   $Z([\mu \nu^{-1/2} , \mu \nu^{5/2}])$,  where $\mu$ is a representation of $\GL_1(\D)$ with $\Dim(\mu)>1$.
			
			\item[\upshape(6)]  $Z([\pi_2 , \nu\pi_2])$, where $\pi_2$ is an irreducible supercuspidal representation of $\GL_2(\D)$.

			\item[\upshape(7)]   $Z([\mu\nu^{-3/2}, \mu\nu^{-1/2}] , [\mu\nu^{-1/2} , \mu\nu^{1/2}])$, where $\mu$ is a representation of $\GL_1(\D)$ with $\Dim(\mu)>1$.
			
			\item[\upshape(8)]  $\sigma_1 \times \sigma_2$, where $\sigma_1$ and $\sigma_2$ are irreducible representations of $\GL_2(\D)$ with a symplectic period. 
			
			\item[\upshape(9)]  $\lambda_1 \times \lambda_2$, where $\lambda_1 $ is a character of $\GL_1(\D)$ and $\lambda_2$ is an irreducible  representation of $\GL_3(\D)$ with a symplectic period. 
			
		\end{enumerate}
	\end{thm}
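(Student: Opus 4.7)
The plan is to prove both directions of the classification by combining the Langlands--Tadi{\'c} parametrization of $\Irr(\GL_4(\D))$ with a geometric (Mackey-type) analysis of $(P,H)$-double cosets in $G = \GL_4(\D)$. Every irreducible admissible $\theta$ is realized as a Langlands quotient of a standard module parabolically induced from essentially square-integrable representations of a Levi subgroup, and I will organize the case analysis by the partition of $4$ associated to the block sizes of the inducing data --- the cases being $(4)$, $(3,1)$, $(2,2)$, $(2,1,1)$ and $(1,1,1,1)$ --- together with the supercuspidal support of $\theta$.

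For the sufficient direction, items (1), (8) and (9) will follow from a hereditary principle: if $\lambda_1, \lambda_2$ are distinguished representations of Levi factors compatible with a block decomposition of $\Sp_n(\D)$, then the parabolic induction $\lambda_1 \times \lambda_2$ inherits a symplectic period via integration against a matrix coefficient along the open $(P,H)$-orbit. Combined with Theorem~\ref{1}, Theorem~\ref{2}, and the classification for $n \leq 2$ of the second author, this takes care of the three ``induced'' families. For items (2)--(7), I will realize each representation as a Langlands/Zelevinsky subquotient of an explicit principal series, pinpoint the $(B,H)$-orbit on which the symplectic functional on the standard module is supported, and verify that the functional survives the projection to the desired irreducible quotient by explicit Jacquet-module computations using the derivative formalism of Tadi{\'c}.

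For the necessary direction, assume $\theta$ has a symplectic period. The Bernstein--Zelevinsky geometric lemma applied along each maximal parabolic $P = MU$ produces a filtration of $\theta|_H$ whose successive quotients are compactly induced from the stabilizers of $(P,H)$-orbits acting on twisted Jacquet modules of $\theta$ along $U$. By the uniqueness statement in Theorem~\ref{1}, exactly one layer of this filtration can carry a nonzero $H$-invariant functional, and this constraint translates into rigid conditions on the Langlands data of $\theta$. Running the analysis through the maximal parabolics and refining with the Borel subgroup for the residual ambiguous cases narrows the possibilities down to precisely items (1)--(9).

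The main obstacle will be the treatment of items (5)--(7) --- the Zelevinsky-type quotients $Z([\mu\nu^{-1/2},\mu\nu^{5/2}])$, $Z([\pi_2,\nu\pi_2])$ and the overlapping-segment representation $Z([\mu\nu^{-3/2},\mu\nu^{-1/2}],[\mu\nu^{-1/2},\mu\nu^{1/2}])$. None of these is induced from a product of distinguished pieces, so both the sufficiency (showing they are distinguished) and the necessity (ruling out nearby ladders) require detailed bookkeeping: one must verify that the symplectic functional on the relevant standard module does not lie in the kernel of the projection onto the Langlands quotient. This calculation, which is a quaternionic analogue of the split-case ladder treatment of Mitra--Offen--Sayag, is the most delicate part of the argument, particularly for item (7) where the two overlapping segments interact nontrivially in the Jacquet filtration.
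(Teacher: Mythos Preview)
Your overall architecture---Mackey/geometric-lemma filtration along maximal parabolics, followed by a case analysis indexed by Langlands data---matches the paper's. Two points, however, need correction.

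First, your ``hereditary principle'' for items (1), (8), (9) is incomplete as stated. The open $(P,H)$-orbit contributes a \emph{sub}module of $(\lambda_1\times\lambda_2)|_H$, not a quotient; a functional on that submodule extends to the full induced representation only if the obstruction in $\Ext^1_H$ of the closed-orbit quotient vanishes. The paper makes this explicit (Propositions~\ref{26} and~\ref{hariom}) and computes the relevant $\Ext^1$ via Frobenius reciprocity and the K\"unneth decomposition (Theorems~\ref{27} and~\ref{34}), reducing it to $\Ext^1_{\GL_1(\D)}$ between one-dimensional representations (Lemma~\ref{11}). Without this step your open-orbit integral does not yield a functional on $\lambda_1\times\lambda_2$, and indeed there are cases in the analysis where the open orbit condition holds but the closed orbit also contributes, so the bookkeeping matters.

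Second, for the hard items (5)--(7) your proposed direct approach---tracking the functional through the projection to the Langlands quotient---is not what the paper does and is substantially harder. The paper instead uses an \emph{elimination} argument: realize the standard module as distinguished (via the closed or open orbit of some parabolic), then use the orbit criteria of Propositions~\ref{32} and~\ref{36} as \emph{necessary} conditions to rule out every other irreducible subquotient, and conclude by Lemma~\ref{9} that the remaining subquotient must carry the period. In several places (e.g.\ $\chi_2\times \Sp_2(\mu)$ and the Speh$\times$Speh cases) the paper even passes \emph{up} to a larger principal series, applies the orbit analysis there, and descends by elimination. This strategy, powered by the multiplicity-one Theorem~\ref{1}, bypasses the delicate survival-under-projection calculation you anticipate; your remark that ``exactly one layer of the filtration can carry a functional'' is not literally what Theorem~\ref{1} says (it bounds $\dim\Hom_H(\theta,\mathbb{C})$ for irreducible $\theta$, not the individual layers), but the correct use of multiplicity one is precisely what makes the elimination work.
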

	
	For $n = 3~ \text{and}~  4$, Theorem  \ref{2} and Theorem \ref{3} also contain all $\Sp_n(\D)$-distinguished unitary representations of $\GL_n(\D)$ postulated in Prasad's conjecture (Conjecture $7.1$, \cite{Verma}). Note that for $n =3$, we only deal with non-supercuspidal representations having a symplectic period.
	\vspace{-.1cm}
	\subsection{About the Proofs} By Conjecture $7.1$ and Proposition $7.3$ in \cite{Verma}, the irreducible supercuspidal representations of $\GL_4(\D)$ are not $\Sp_4(\D)$-distinguished. Therefore, we focus  on studying irreducible non-supercuspidal representations with a symplectic period. Since any irreducible non-supercuspidal representation is a quotient of a representation of the form $\ind_{P_{r,n-r}}^{\GL_{n}(\D)}(\lambda_1 \otimes \lambda_2)$, $\lambda_1 \in \Irr(\GL_r(\D))$, $\lambda_2 \in \Irr(\GL_{n-r}(\D))$, it is enough to study the problem for representations of these types. For $\GL_{3}(\D)$, this reduces the problem to the analysis of representations of the type $\sigma_1 \times \sigma_2$, where $\sigma_1 \in \Irr(\GL_2(\D))$, $\sigma_2 \in \Irr(\GL_1(\D))$. For $\GL_{4}(\D)$, this breaks down the problem to the analysis of $\lambda_1 \times \lambda_2$, where either $\lambda_1 , \lambda_2 \in \Irr(\GL_2(\D))$ or $\lambda_1 \in \Irr(\GL_1(\D))$ and $\lambda_2$ is irreducible supercuspidal representation of $\GL_3(\D)$. In both instances, an exhaustive list of representations (not necessarily irreducible) is acquired using Mackey's theory. Subsequently, a study of every subquotient (up to permutation) through open and closed orbits yields a complete list of irreducible non-supercuspidal representations of $\GL_n(\D)$ with a symplectic period for $n = 3$ and $4$. In the context of an open orbit, we make use of the functor $\Ext_{\Sp_n(\D)}^1(\cdot , \mathbb{C})$.
	
	\subsection{Organization} The structure of the paper is outlined as follows. Section 2 introduces the notation and preliminary notions, focusing on the theory of segments, Zelevinsky, and Langlands classification theorems. We deal with orbit structure and compute stabilizers of the orbits in Section $3$. Sections $4$ and $5$ analyze the representations of the groups $\GL_3(\D)$ and $\GL_4(\D)$ with a symplectic period,  leading to the proofs of Theorem \ref{2} and Theorem \ref{3}. 
	
	
	\section{Preliminaries}
	\subsection{Notation}
	Let $\D$ be the unique quaternion division algebra over a non-archimedean local field $\K$ of characteristic zero. Let $\Te_{\D/\K}$ and $\N_{\D/\K}$ be the reduced trace and reduced norm maps on $\D$ and  $\tau$ be the involution on $\D$ defined by $x \rightarrow \overline{x} = \Te_{\D/\K}(x) - x$.
	
	\subsubsection{}  For $n \in \mathbb{N},$ let $V_n = e_1\D \oplus~ e_2\D \oplus \cdots \oplus e_n\D $ be a right $\D$-vector space of dimension $n$.
	\begin{defn}
		We define a Hermitian form $(,)$ on $V_n$ by 
		\begin{enumerate}
			\item $( e_i , e_{n-j+1} ) = \delta_{ij}~ for ~i,j = 1,2,\ldots,n$,
			\item $( v , v' ) = \tau( v' , v )$,
			\item $( vx , v'x' ) = \tau(x)( v , v' )x',~ for ~ v,v' \in V_n ,~ x,x' \in \D.$
		\end{enumerate}
	\end{defn}
	
	\subsubsection{} Let $\Sp_n(\D)$ be the group of isometries of the Hermitian form $(,)$. The group $\Sp_n(\D)$ is the unique non-split inner form of the group $\Sp_{2n}(\K)$. The group $\Sp_n(\D)$ can also be defined as 
	$$\Sp_n(\D) = \{ A \in \GL_n(\D)~ | ~ A ~J ~ ^t\bar{A} = J \},$$  
	where $^t\bar{A} = (\bar{a}_{ji})$ for $A = (a_{ij})$ and 
	$$ J = \begin{pmatrix}
		&  &  &    1 \\
		&  &  1  & \\
		& {\iddots}  &  & \\
		1  &  &  & 
	\end{pmatrix}.$$
	
	For a right $\D$-vector space $V,$ let $\GL_{\D}(V)$ be the group of all invertible $\D$-linear transformations on $V$ and $\Sp_{\D}(V)$ be the group of all invertible $\D$-linear transformations on $V$, which preserve the above-defined Hermitian form on $V$. The group  $\GL_{\D}(V)$ is identified with $\GL_n(\D)$  using the above basis. Since we have a Hermitian structure on $V$, so $H = \Sp_{\D}(V) \subset \GL_{\D}(V)$.
	
	\subsubsection{} In accordance with the notation used in \cite{BZ}, we denote the set of all smooth representations of an $l$-group $G$ as $\Alg(G)$ and the subset of all
	irreducible admissible representations as $\Irr(G)$.  For any $\pi \in \Alg(G)$, let $\tilde{\pi}$ be its contragredient representation. Denote the modular character of the group $G$ by $\delta_{G}$. Let $\Ext_{G}^1(\pi_1,\pi_2)$ represent the derived functor of the $\Hom_{G}(\pi_1,\pi_2)$.
	
	\subsubsection{} Let $\Irr(\GL_r(\D))$ be the set of all irreducible admissible representation of $\GL_r(\D)$. We denote $\Irr(G') = \cup_{r \geq 0} \Irr(\GL_r(\D))$. Let $C$ be the set of all supercuspidal representations in $\Irr(G')$. Let $\mathbb{C}$ denote the trivial representation of any group $K$. 
	
	\subsubsection{}  Define $\nu = \nu_n = |\N_{\D|\K}(\cdot)|_{\K}$ as a character on $\GL_n(\D)$ for any $n \in \mathbb{N}$. Note that we assume $\nu_1 = \nu^2$ throughout the paper. For all $x \in \D^{\times}$, set
	$|x|_{\D} = |\N_{\D/\K}(x)|_{\K}^2 = \nu^2(x)$.

	\subsubsection{} Let $P_{r,k-r}$ be the group of block upper triangular matrices corresponding to the tuple $(n_1,\ldots,n_r)$, with unipotent radical $N_{r,k-r}$. The modulus function of the group $P_{r,k-r}$ is denoted by $\delta_{P_{r,k-r}}$. Since a parabolic group normalizes its unipotent radical, it defines a character of $P_{r,k-r}$ through
	​the module of the automorphism $n \mapsto pnp^{-1}$ of $N_{r,k-r}$ for $p \in P_{r,k-r}$. Call this character $\delta_{N_{r,k-r}}$, thus $\delta_{N_{r,k-r}} = \delta_{P_{r,k-r}}$. For an element $p \in P_{r,k-r}$ with its Levi part equal to $\diag(g_1,g_2)$, we have 
	$$\delta_{P_{r,k-r}}(p) = |\det(g_1)|_{\D}^{k-r}|\det(g_2)|_{\D}^{-r}.$$
	
	\subsubsection{} Consider the induced representation of $(\sigma, H, W) \in \Alg(H)$ to $G$ as the space of locally constant functions 
	$$\Ind_{H}^G(\sigma) = \{f : G \rightarrow W ~|~ f(hg) = \delta_H^{1/2}\delta_G^{-1/2} \sigma(h)f(g) ~ \text{for all}~ h \in H, g\in G\},$$
	where $\delta_{G}$ and $\delta_{H}$ are the modular functions of $G$ and $H$, respectively. The group $G$
	acts on this space by right translation. The subspace of 
	$\Ind_{H}^G(\sigma)$ consisting of functions compactly supported modulo $H$ is known as the compact induction from $H$ to $G$, denoted by $\ind_H^G(\sigma)$.
    Occasionally we will use non-normalized induction (see Remark $2.22$ of \cite{BZ} for the definition), although unless otherwise mentioned induction is always normalized.  
	 
	 Given representations $\rho_i \in \Irr(\GL_{m_i})$ with $i = 1, \ldots, r$, extend $\rho_1 \otimes \cdots \otimes \rho_r$ to the parabolic subgroup $P_{m_1,\ldots,m_r}$ such that it acts trivially on the unipotent radical $N_{m_1,\ldots,m_r}$. We denote the representation $\Ind_{P_{m_1,\ldots,m_r}}^{\GL_{m_1+\cdots+m_r}}$ by $\rho_1 \times \cdots \times \rho_r$. 	
	The Jacquet functor with respect to a unipotent subgroup N is denoted by $r_N$
	and is always normalized (for more details about Jacquet functor, see Section $1$ in \cite{BZ}).


	\subsection{Preliminaries on segments and symplectic periods}
	
	\subsubsection{} Let $\rho$ be a supercuspidal representation of $\GL_r(\D)$ and $a,b \in \mathbb{Z}$ such that $a \leq b$.
	\begin{defn}\label{4}
		A segment is a sequence of the form $(\rho  \nu_{\rho}^a, \rho \nu_{\rho}^{a+1}, \ldots ,  \rho \nu_{\rho}^{b})$, where $\nu_{\rho} = \nu$ or $\nu^2$. We denote a segment by $[a,b]_{\rho}$ or $\Delta$.
	\end{defn}
	\begin{rem}
		In Definition \ref{4}, the character $\nu_{\rho}$ depends on the representation $\rho$ (for more details, see Section $2$ in  \cite{tadic1990induced}).
	\end{rem}
	For a segment $\Delta = [a,b]_{\rho}$ as above,  the unique irreducible submodule and the unique irreducible quotient of $\rho \nu_{\rho}^a \times \cdots \times \rho \nu_{\rho}^{b}$ are denoted by $Z(\Delta)$ and $L(\Delta)$, respectively. We denote by $b(\Delta) = \rho \nu_{\rho}^a$ the beginning, $e(\Delta) = \rho \nu_{\rho}^b$ the end, and $l(\Delta) = b-a+1$ the length of $\Delta$. Let $\tilde{\Delta}=[-b,-a]_{\rho}$ denote the contragredient of the segment $\Delta$.

	\begin{defn}
		Let $\Delta = [a,b]_{\rho}$ and $\Delta' = [a',b']_{\rho'}$ be two segments. We say that $\Delta$ precedes $\Delta'$ if a subsequence can be extracted from the sequence $(\rho  \nu_{\rho}^a, \ldots ,  \rho \nu_{\rho}^{b}, \ldots, \rho'\nu_{\rho'}^{b'})$, forming a segment of length strictly greater than $l(\Delta)$ and $l(\Delta')$.
	\end{defn}
	The segments $\Delta$ and $\Delta'$ are called linked if either $\Delta$ precedes $\Delta'$ or $\Delta'$ precedes $\Delta$.
	Given a multiset $\mathfrak{a} = \{\Delta_1,\ldots,\Delta_s\}$ of segments, let 
	$\lambda(\mathfrak{a}) := Z(\Delta_1) \times \cdots \times Z(\Delta_s) .$
	If $\Delta_i$ does not precede $\Delta_j$ for any $i < j$, then  $Z(\Delta_1,\ldots,\Delta_s)$ is a unique irreducible submodule of $\lambda(\mathfrak{a})$. This submodule is independent of the ordering of the segments as long as the ``does not precede condition" holds. A similar statement also holds for quotients. The unique irreducible quotient of $\pi(\mathfrak{a}) := Q(\Delta_1) \times \cdots \times Q(\Delta_s)$ is denoted by $Q(\Delta_1,\ldots,\Delta_s)$. 
	
	\subsubsection{} Let S denote the set of all segments in $C$ and  $M(S)$ indicate the set of all finite multisets in S. Then an elementary operation on $\mathfrak{a} = (\Delta_1, \Delta_2, \ldots, \Delta_n) \in M(S)$ is the replacement of two linked segments $\{\Delta_1, \Delta_2 \}$ in $\mathfrak{a}$ by the pair $\{\Delta_1 \cup \Delta_2, \Delta_1 \cap \Delta_2\}$. We put a partial order on $M(S)$ as follows: Define $\mathfrak{b} \leq \mathfrak{a}$ if $\mathfrak{b}$ can be obtained from $\mathfrak{a}$ by a sequence of elementary operations.

	\subsubsection{Zelevinsky and Langlands classification theorems} The following theorems provide the subquotients of a representation of $\GL_n(\D)$.
	
	\begin{thm}\upshape\cite{minguez2013representations}\textbf{.}\label{5}
		Let $\Delta_1$ and $\Delta_2$ be two segments. If $\Delta_1$ and $\Delta_2$ are linked, then set $\Delta_3 = \Delta_1 \cup \Delta_2 $ and $\Delta_4 = \Delta_1 \cap \Delta_2$. The representation $\pi = Z(\Delta_1) \times Z(\Delta_2)$ is irreducible if and only if $\Delta_1$ and $\Delta_2$ are not linked. If $\Delta_1$ and $\Delta_2$ are linked, then $\pi$ has length $2$. If $\Delta_2$ precedes $\Delta_1$, then $\pi$ has a unique irreducible subrepresentation $Z(\Delta_1 , \Delta_2)$ and a unique irreducible quotient $Z(\Delta_3) \times Z(\Delta_4)$. If $\Delta_1$ precedes $\Delta_2$, then $\pi$ has a unique irreducible subrepresentation $Z(\Delta_3) \times Z(\Delta_4)$ and a unique irreducible quotient $Z(\Delta_1 , \Delta_2)$.
	\end{thm}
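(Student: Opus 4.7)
The plan is to follow the Bernstein-Zelevinsky Jacquet-module strategy adapted to inner forms, reading the composition structure of $\pi = Z(\Delta_1) \times Z(\Delta_2)$ off its Jacquet modules along the standard maximal parabolics by means of the geometric lemma. The central building block is the Tadi\'c-type formula for $r_N(Z(\Delta))$ attached to a single segment $\Delta = [a,b]_{\rho}$: along the parabolic with Levi $\GL_{ir}(\D) \times \GL_{(\ell-i)r}(\D)$ one obtains the irreducible factor $Z([a,a+i-1]_{\rho}) \otimes Z([a+i,b]_{\rho})$, and its transfer to $\GL_n(\D)$ through the normalization $\nu_{\rho} \in \{\nu, \nu^2\}$ is precisely the content supplied by \cite{minguez2013representations}. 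With this in hand, the geometric lemma gives a filtration of $r_N(\pi)$ indexed by double cosets, whose graded pieces are twisted tensor products of sub-Jacquet modules of $Z(\Delta_1)$ and $Z(\Delta_2)$.

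In the unlinked case, a direct inspection of cuspidal supports shows that the graded pieces of $r_N(\pi)$ land in pairwise distinct irreducibles, so by Frobenius reciprocity every proper subrepresentation of $\pi$ would contribute a strictly smaller Jacquet module than $\pi$; this rules out any nontrivial filtration and forces irreducibility. In the linked case, the same analysis isolates exactly two candidate composition factors with matching cuspidal support, namely $Z(\Delta_1,\Delta_2)$ and $Z(\Delta_3) \times Z(\Delta_4)$, the latter being itself irreducible since $\Delta_3$ and $\Delta_4$ are unlinked (one contains the other). When $\Delta_2$ precedes $\Delta_1$, the definition of $Z(\Delta_1,\Delta_2)$ as the unique irreducible submodule under the ``does not precede'' convention identifies it as the submodule of $\pi$, and then $Z(\Delta_3) \times Z(\Delta_4)$ is forced as the quotient. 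The opposite precedence follows by passing to contragredients via $\pi \mapsto \tilde{\pi}$ and invoking compatibility of segments with the operation $\tilde{\Delta} = [-b,-a]_{\rho}$.

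The principal obstacle will be pinning the length of $\pi$ at exactly $2$ rather than merely at most $2$, which requires showing that the Jacquet module along a well-chosen maximal parabolic has length exactly $2$ with both candidate factors appearing with the correct multiplicity. This hinges on careful bookkeeping of the $\nu_{\rho}$ normalization, since whether two cuspidal supports are equivalent depends on the choice attached to $\rho$; this is precisely the delicate point that the division-algebra combinatorics of \cite{minguez2013representations} address, and is where the classical proof over $\K$ cannot be transplanted verbatim.
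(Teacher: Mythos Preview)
The paper does not prove this statement; it is quoted verbatim as a theorem from \cite{minguez2013representations} (M\'inguez--S\'echerre) and used as a black box. There is therefore no proof in the paper to compare your proposal against.

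Your sketch follows the correct high-level strategy and is essentially the approach of the cited reference. Two places deserve tightening if you intend to flesh this out. In the unlinked case, observing that the graded pieces of $r_N(\pi)$ are pairwise distinct is not by itself sufficient for irreducibility; the standard route is to combine this Jacquet-module count with an intertwining-operator argument (or equivalently, to exhibit an isomorphism $Z(\Delta_1)\times Z(\Delta_2)\cong Z(\Delta_2)\times Z(\Delta_1)$ and compare socle with cosocle via Frobenius reciprocity). In the linked case, you are right that pinning the length at exactly $2$ is the crux, but the phrase ``a well-chosen maximal parabolic'' underplays the difficulty: in the division-algebra setting one needs the full structure theory of \cite{minguez2013representations} (in particular their multiplicity-one results for Jacquet modules of $Z(\Delta)$ and the classification of irreducibles by multisegments), since the naive count of cuspidal supports can coincide for distinct composition factors.
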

	\begin{exa} Let $\chi_1$ be a character of $\GL_1(\D)$ and $\mu$ be an irreducible representation of $\GL_1(\D)$ with $\Dim(\mu)>1$. Let $\rho$ be an irreducible supercuspidal representation of $\GL_2(\D)$.
	\end{exa}
	\begin{enumerate}
		\item 	If $\pi = \nu_1^{-1/2}\chi_1 \times \nu_1^{1/2}\chi_1$, then it has a unique irreducible subrepresentation $\chi_2$ and a unique irreducible quotient $\chi_2 St_2$. The representation $\chi_2$ is a character, and $\chi_2 St_2$ is called the Steinberg representation.
		
		\item If $\pi = \nu^{-1/2}\mu \times \nu^{1/2}\mu$, then it has a unique irreducible subrepresentation $\Sp_2(\mu)$ and a unique irreducible quotient $St_2(\mu)$. The representations $\Sp_2(\mu)$ and $St_2(\mu)$ are called the Speh and generalized Steinberg representations, respectively.
		
		\item If $\pi = \nu^{-1/2}\rho \times \nu^{1/2}\rho$, then it has a unique irreducible subrepresentation \linebreak $Z([\nu^{-1/2}\rho , \nu^{1/2}\rho])$ and a unique irreducible quotient $Z([ \nu^{-1/2}\rho],[ \nu^{1/2}\rho])$.
	\end{enumerate}
	
	\begin{thm}\upshape\cite{minguez2013representations}\textbf{.}\label{6}
		Let $\mathfrak{a} , \mathfrak{b} \in M(S)$. Then $Z(\mathfrak{b})$ is a subquotient of $\lambda (\mathfrak{a})$ if and only if $\mathfrak{b} \leq \mathfrak{a}$.
	\end{thm}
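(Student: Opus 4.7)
The plan is to prove both directions separately, in a style parallel to Zelevinsky's original argument for $\GL_n(\K)$, adapted to the inner-form setting.

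For the implication $\mathfrak{b} \leq \mathfrak{a} \Rightarrow Z(\mathfrak{b})$ is a subquotient of $\lambda(\mathfrak{a})$, I would induct on the length $m$ of a chain of elementary operations $\mathfrak{a} = \mathfrak{a}_0 \to \mathfrak{a}_1 \to \cdots \to \mathfrak{a}_m = \mathfrak{b}$. The base $m = 0$ is immediate, since $Z(\mathfrak{a})$ is by definition the unique irreducible submodule of $\lambda(\mathfrak{a})$ once the segments have been reordered so that no $\Delta_i$ precedes a later $\Delta_j$. For the inductive step it suffices to treat a single elementary operation replacing a linked pair $\{\Delta_i, \Delta_j\}$ by $\{\Delta_i \cup \Delta_j,\ \Delta_i \cap \Delta_j\}$. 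After rewriting $\lambda(\mathfrak{a}_0)$ with $Z(\Delta_i)$ and $Z(\Delta_j)$ adjacent in the product (a permutation of factors changes the parabolic induction only up to Jordan--H\"older constituents), Theorem \ref{5} exhibits $Z(\Delta_i \cup \Delta_j) \times Z(\Delta_i \cap \Delta_j)$ as an irreducible constituent of $Z(\Delta_i) \times Z(\Delta_j)$, so $\lambda(\mathfrak{a}_1)$ is a subquotient of $\lambda(\mathfrak{a}_0)$. The inductive hypothesis then furnishes $Z(\mathfrak{b})$ as a subquotient of $\lambda(\mathfrak{a}_1)$, and transitivity of the subquotient relation completes the step.

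For the converse I would recover $\mathfrak{b}$ from the cuspidal support of $Z(\mathfrak{b})$ via Jacquet modules. The Bernstein--Zelevinsky geometric lemma, in the inner-form version established in \cite{minguez2013representations}, expresses the Jacquet module $r_N(\lambda(\mathfrak{a}))$ along a suitable parabolic as a filtration whose successive quotients are indexed by combinatorial configurations obtained from $\mathfrak{a}$ by cutting its segments. Taking $N$ to be the unipotent radical adapted to the cuspidal support of $Z(\mathfrak{b})$ and comparing the $Z(\mathfrak{b})$-isotypic contribution on both sides forces one such configuration to reproduce $\mathfrak{b}$; unpacking that identification as a sequence of elementary operations delivers $\mathfrak{b} \leq \mathfrak{a}$.

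The main obstacle is this converse. The combinatorial passage from a Jacquet-module equality to a partial-order statement on multisets is intricate because the character $\nu_\rho$ is either $\nu$ or $\nu^2$ depending on $\rho$, and this choice must be tracked throughout. A useful simplification is that segments attached to inertially inequivalent supercuspidals interact neither under $\times$ nor under $r_N$, so the problem decouples into blocks indexed by inertial classes of supercuspidals. Within each block $\nu_\rho$ is constant and the argument reduces essentially to Zelevinsky's classical combinatorics; assembling the blocks yields the full theorem. Much of the block-wise machinery is exactly what \cite{minguez2013representations} provides, so the final write-up can be organised as an appeal to those block-wise results together with the inductive construction above.
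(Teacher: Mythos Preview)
The paper does not supply a proof of this theorem at all: it is stated with the citation \cite{minguez2013representations} and used as a black box, so there is no ``paper's own proof'' to compare against. Your sketch is a reasonable outline of the Zelevinsky-style argument that M\'inguez--S\'echerre adapt to inner forms; the forward direction via induction on the number of elementary operations and Theorem~\ref{5} is standard and correct, and your remarks on decoupling by inertial class for the converse are apt. That said, the converse sketch is where the actual work lies, and ``unpacking that identification as a sequence of elementary operations'' hides a nontrivial combinatorial argument (in Zelevinsky's original it occupies several pages and uses a careful analysis of minimal multisegments in the partial order); if you were writing this up in full you would need to make that step precise rather than gesture at it.
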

	
	\begin{thm}\upshape\cite{tadic1990induced}\textbf{.} \label{7}
		Let $\Delta_1$ and $\Delta_2$ be two segments. The segments $\Delta_1$ and $\Delta_2$ are not linked if and only if $L(\Delta_1) \times L(\Delta_2)$ is irreducible. The segments $\Delta_1$ and $\Delta_2$ are linked if and only if $L(\Delta_1) \times L(\Delta_2)$ has two different irreducible subquotients $L(\Delta_1 \cup \Delta_2) \times L(\Delta_1 \cap \Delta_2)$ and $L(\Delta_1 , \Delta_2)$ with multiplicity one. 
	\end{thm}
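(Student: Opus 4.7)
The plan is to deduce this theorem from its Zelevinsky counterpart, Theorem \ref{5}, via the Aubert--Zelevinsky involution $\pi \mapsto \widehat{\pi}$ on the Grothendieck group of finite-length smooth representations of $\GL_n(\D)$. In the quaternionic setting this involution has been developed by Minguez--Secherre, following Aubert's original $p$-adic construction; it exchanges the Zelevinsky and Langlands parametrizations (so $\widehat{Z(\mathfrak{a})} = L(\mathfrak{a})$ up to a sign that is irrelevant at the level of lengths and composition factors), preserves irreducibility and length, and commutes with parabolic induction in the Grothendieck group.

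First, I would observe that the ``linked'' condition is intrinsic to the unordered pair $\{\Delta_1,\Delta_2\}$ and independent of whether the associated irreducible representations are presented via $Z$ or via $L$. Theorem \ref{5} then fully describes the composition series of $Z(\Delta_1)\times Z(\Delta_2)$: it is irreducible when $\Delta_1,\Delta_2$ are not linked, and of length two in the linked case, with factors $Z(\Delta_1\cup\Delta_2)\times Z(\Delta_1\cap\Delta_2)$ and $Z(\Delta_1,\Delta_2)$, each appearing with multiplicity one. Applying the Aubert involution term by term and using the compatibilities recalled above then immediately yields the claimed statement for $L(\Delta_1)\times L(\Delta_2)$, with composition factors $L(\Delta_1\cup\Delta_2)\times L(\Delta_1\cap\Delta_2)$ and $L(\Delta_1,\Delta_2)$.

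An alternative, self-contained route, closer to Tadic's original treatment, is a direct Jacquet-module computation via the Bernstein--Zelevinsky geometric lemma: one enumerates the contributions to $r_N\bigl(L(\Delta_1)\times L(\Delta_2)\bigr)$ indexed by permutations of the supercuspidal support, shows that exactly one term survives when the segments are unlinked (forcing irreducibility, because any irreducible subquotient has the same cuspidal support and hence the same Jacquet-module multiplicities), and shows that exactly two terms survive in the linked case, matching the two predicted factors. The main obstacle in either approach is the combinatorial bookkeeping with multisegments: in the duality approach, one must verify that Aubert duality interacts correctly with precedence, union, intersection, and the ordered pair $(\Delta_1,\Delta_2)$; in the geometric-lemma approach, one must identify precisely which double cosets contribute non-trivially and pair each with the predicted composition factor. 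Once the multisegment machinery for $\GL_n(\D)$ is available, the remainder of the argument is formal.
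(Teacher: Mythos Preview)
The paper does not give its own proof of this statement: it is quoted verbatim as a known result from Tadi\'c's 1990 paper on induced representations of $\GL(n,A)$, and is used only as input for the later analysis. So there is no ``paper's proof'' to compare against.

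That said, both of your proposed routes are sound. Your duality argument is a clean modern shortcut: once one accepts (from the Minguez--S\'echerre theory you invoke) that the Aubert involution on the Grothendieck group of $\GL_n(\D)$ sends $Z(\mathfrak{a})$ to $\pm L(\mathfrak{a})$, preserves irreducibility, and commutes with parabolic induction, Theorem~\ref{7} is an immediate translation of Theorem~\ref{5}. The only point to make explicit is that the identity $\widehat{Z(\mathfrak{a})}=\pm L(\mathfrak{a})$ for general multisegments is itself a nontrivial fact (equivalent to the Moeglin--Waldspurger algorithm in this setting, cf.\ \cite{badulescu2007zelevinsky}); for the statement at hand you only need it for single segments and for the unlinked pair $\{\Delta_1\cup\Delta_2,\Delta_1\cap\Delta_2\}$, which is more elementary. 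Your second route, the direct Jacquet-module count via the geometric lemma, is essentially Tadi\'c's original argument in \cite{tadic1990induced} and is logically prior, since it does not rely on the later Minguez--S\'echerre machinery that underlies both Theorem~\ref{5} and the duality you use.
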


	\begin{thm}\upshape\cite{tadic1990induced}\textbf{.} \label{8}
		Let $\mathfrak{a} , \mathfrak{b} \in M(S)$. Then $L(\mathfrak{b})$ is a subquotient of $\pi (\mathfrak{a})$ if and only if $\mathfrak{b} \leq \mathfrak{a}$.
	\end{thm}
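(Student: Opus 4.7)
My plan is to reduce Theorem \ref{8} to the already-cited Zelevinsky statement Theorem \ref{6} via the Aubert--Zelevinsky involution $\iota$, which is an involutive $\mathbb{Z}$-linear self-map of the Grothendieck group of finite length smooth representations of $\GL_n(\D)$ that, up to sign, interchanges $Z(\Delta)$ with $L(\Delta)$ for every segment $\Delta$, and which commutes with parabolic induction on the level of the Grothendieck group (up to sign). The existence and these properties in the setting of $\GL_n$ over a division algebra follow from the Bernstein--Zelevinsky--Aubert machinery and its extension to inner forms by M{\'i}nguez--S{\'e}cherre.

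The key formal consequence I would use is that $\iota(\lambda(\mathfrak{a}))=\pm\pi(\mathfrak{a})$ and $\iota(Z(\mathfrak{b}))=\pm L(\mathfrak{b})$ as virtual representations, for every $\mathfrak{a},\mathfrak{b}\in M(S)$. Since $\iota$ is an isomorphism of Grothendieck groups, multiplicities of irreducible constituents are preserved (in absolute value). Hence the multiplicity of $L(\mathfrak{b})$ in $\pi(\mathfrak{a})$ equals the multiplicity of $Z(\mathfrak{b})$ in $\lambda(\mathfrak{a})$, and Theorem \ref{6} yields the ``$\Leftrightarrow \mathfrak{b}\leq\mathfrak{a}$'' conclusion at once.

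If I wanted to bypass the Aubert involution, I would argue directly in two steps. For sufficiency, I would induct on the number of elementary operations linking $\mathfrak{a}$ to $\mathfrak{b}$: the base case $\mathfrak{b}=\mathfrak{a}$ is Theorem \ref{7} (or its trivial analogue when no segment is linked), and a single elementary operation replacing a linked pair $\{\Delta_i,\Delta_j\}$ by $\{\Delta_i\cup\Delta_j,\Delta_i\cap\Delta_j\}$ is handled by the Grothendieck group identity $L(\Delta_i)\times L(\Delta_j)=L(\Delta_i,\Delta_j)+L(\Delta_i\cup\Delta_j)\times L(\Delta_i\cap\Delta_j)$ coming from Theorem \ref{7}, multiplied by $\prod_{k\neq i,j}L(\Delta_k)$ and combined with the inductive hypothesis. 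For necessity one would compare supercuspidal supports (which forces $\mathfrak{b}$ and $\mathfrak{a}$ to have the same underlying cuspidal multiset) and then analyze Jacquet modules via the geometric lemma to restrict the possible orderings, arguing by induction on the partial order $\leq$.

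The main obstacle is the necessity direction, i.e.\ ruling out all $\mathfrak{b}\not\leq\mathfrak{a}$. Via the Aubert involution this is free once the involution is set up, but setting it up requires verifying that the relevant parabolic induction and Jacquet functor compatibilities transfer verbatim from $\GL_n(\K)$ to $\GL_n(\D)$ — the content of this verification is exactly the segment/multisegment combinatorics developed in \cite{tadic1990induced} and \cite{minguez2013representations}, so the cleanest write-up simply appeals to those references, which is what the present statement does.
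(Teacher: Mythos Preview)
The paper does not prove this theorem at all: it is stated with a citation to \cite{tadic1990induced} and no argument is given. So there is no ``paper's own proof'' to compare against; the authors simply import the result from Tadi\'c.

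Your proposed argument via the Aubert--Zelevinsky involution is correct in outline and is in fact the standard way to transfer the subquotient criterion from the $Z$-picture (Theorem~\ref{6}) to the $L$-picture. The key facts you invoke --- that $\iota$ is an involution on the Grothendieck group sending $Z(\Delta)$ to $\pm L(\Delta)$ and commuting (up to sign) with parabolic induction --- are available for inner forms by the work of M{\'i}nguez--S{\'e}cherre, so the deduction goes through. Your alternative direct sketch is also reasonable; as you note, the sufficiency direction is a routine induction on elementary operations using Theorem~\ref{7}, while necessity genuinely requires the Jacquet-module combinatorics developed in the cited references. Since the paper merely quotes the result, your write-up already contains more than the paper does on this point.
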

	
	\subsubsection{} The following lemmas significantly contribute to the understanding of symplectic periods within $\GL_3(\D)$ and $\GL_4(\D)$ representations.
	
	\begin{lem}\label{9}
		Let $\pi$ be an admissible finite length representation of $\GL_n(\D)$.~If $\pi$ has a symplectic period, then there exists an irreducible subquotient $\tau$ of $\pi$, with a symplectic period.
	\end{lem}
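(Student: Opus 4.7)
The plan is a straightforward induction on the length $\ell$ of a Jordan--Hölder series of $\pi$. Since $\pi$ is admissible of finite length, it admits a composition series
$$0 = \pi_0 \subsetneq \pi_1 \subsetneq \cdots \subsetneq \pi_k = \pi,$$
with irreducible successive quotients $\tau_i = \pi_i/\pi_{i-1}$. The base case $k=1$ is trivial: $\pi$ itself is irreducible and we take $\tau = \pi$.

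For $k \ge 2$, I would use the short exact sequence
$$0 \to \pi_{k-1} \to \pi \to \tau_k \to 0,$$
and apply the contravariant left-exact functor $\Hom_H(-, \mathbb{C})$ to obtain
$$0 \to \Hom_H(\tau_k, \mathbb{C}) \to \Hom_H(\pi, \mathbb{C}) \to \Hom_H(\pi_{k-1}, \mathbb{C}).$$
I would now split into two cases. If $\Hom_H(\tau_k, \mathbb{C}) \neq 0$, then the irreducible quotient $\tau_k$ itself is $H$-distinguished and we set $\tau = \tau_k$. Otherwise, the map $\Hom_H(\pi, \mathbb{C}) \to \Hom_H(\pi_{k-1}, \mathbb{C})$ is injective, so the hypothesis $\Hom_H(\pi, \mathbb{C}) \ne 0$ forces $\Hom_H(\pi_{k-1}, \mathbb{C}) \ne 0$. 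The induction hypothesis applied to $\pi_{k-1}$ (of length $k-1$) then produces an irreducible subquotient $\tau$ of $\pi_{k-1}$, and hence of $\pi$, with a symplectic period.

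There is no real obstacle here: the argument relies only on the existence of a finite composition series (guaranteed by the admissibility and finite-length hypothesis) and on left exactness of $\Hom_H(-, \mathbb{C})$. It is worth remarking that the $\Ext^1_H$ term of the long exact sequence is not needed for this lemma — left exactness suffices — which is precisely why no vanishing hypothesis on Ext is imposed on $\pi$. The Ext functor mentioned in the introduction will however enter later, when analyzing open-orbit contributions to the Mackey filtration in the proofs of Theorems \ref{2} and \ref{3}.
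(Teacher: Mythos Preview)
Your proof is correct and is the standard argument: induct on the length of a composition series and use left exactness of $\Hom_H(-,\mathbb{C})$ on the top short exact sequence. The paper itself states this lemma without proof (it is a routine fact, invoked repeatedly in Sections~4 and~5), so there is no alternative argument to compare against; your write-up simply fills in the omitted details.
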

	
	\begin{lem}\upshape\cite{Verma}\textbf{.}\label{10}
		Let $\pi$ be an irreducible admissible representation of $\GL_n(\D)$. If $\pi$ has a symplectic period, then its contragredient also has a symplectic period.
	\end{lem}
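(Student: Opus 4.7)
My plan is to exhibit an involution $\sigma$ of $G = \GL_n(\D)$ that preserves $H = \Sp_n(\D)$ and satisfies $\pi \circ \sigma \simeq \tilde\pi$ for every irreducible admissible $\pi$, and then to transport the $H$-invariant linear form on $\pi$ to one on $\tilde\pi$ through $\sigma$.

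Define $\sigma: G \to G$ by $\sigma(g) = J \cdot {}^t\bar g^{-1} \cdot J^{-1}$, where $J$ is the antidiagonal matrix from the definition of $\Sp_n(\D)$ and ${}^t\bar{(\cdot)}$ denotes transpose composed with the involution $\tau$ on $\D$. Since $g \mapsto {}^t\bar g$ is an anti-automorphism of $\GL_n(\D)$, composing with $g \mapsto g^{-1}$ and with conjugation by $J$ produces an automorphism of $G$; one checks $\sigma^2 = \mathrm{id}$. The first key point is that $\sigma$ fixes $H$ pointwise: for $h \in H$ the defining relation $h J\, {}^t\bar h = J$ yields ${}^t\bar h^{-1} = J^{-1} h J$, and therefore $\sigma(h) = J \cdot J^{-1}hJ \cdot J^{-1} = h$. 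In particular, if $l \in \Hom_H(\pi, \mathbb{C})$, then the same functional $l$ is $H$-invariant for the twisted representation $\pi^\sigma$ defined by $\pi^\sigma(g) = \pi(\sigma(g))$, giving $\pi^\sigma \in \Hom_H\text{-distinguished}$.

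The second key point is the Gelfand--Kazhdan-type identity $\pi^\sigma \simeq \tilde\pi$ for every irreducible admissible representation $\pi$ of $\GL_n(\D)$. Over a field this is the classical Gelfand--Kazhdan theorem, and its extension to $\GL_n(\D)$ (for $\D$ a division algebra over a non-archimedean local field) is available in the literature; the argument proceeds by analyzing $(G \times G)$-invariant distributions on $G$ under the action $(x,y)\cdot g = xg\sigma(y)^{-1}$ and showing they are automatically invariant under the transposition $g \mapsto \sigma(g)$, whence the characters of $\pi$ and $\pi^\sigma$ agree with those of $\tilde\pi$ on the regular set, forcing $\pi^\sigma \simeq \tilde\pi$ by irreducibility and admissibility.

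Combining the two, once we know $\pi$ admits a nonzero $l \in \Hom_H(\pi, \mathbb{C})$ we obtain $\Hom_H(\pi^\sigma, \mathbb{C}) \neq 0$, and the isomorphism $\pi^\sigma \simeq \tilde\pi$ then produces a nonzero element of $\Hom_H(\tilde\pi, \mathbb{C})$, as required. The main obstacle is clearly the second step: verifying $\pi^\sigma \simeq \tilde\pi$ over the division algebra $\D$, which is not automatic from the split case and requires a Gelfand--Kazhdan-style analysis of bi-invariant distributions on $\GL_n(\D)$; once this is granted, the rest of the argument is essentially formal.
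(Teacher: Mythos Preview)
The paper does not supply its own proof of this lemma; it is quoted from \cite{Verma} without argument. Your approach is correct and is the standard one: the involution $\sigma(g)=J\,{}^t\bar g^{-1}J^{-1}$ fixes $\Sp_n(\D)$ pointwise (this is exactly the defining relation of $\Sp_n(\D)$), and the division-algebra analogue of the Gelfand--Kazhdan theorem gives $\pi^{\sigma}\simeq\tilde\pi$ for every irreducible admissible $\pi$ of $\GL_n(\D)$, which transports an $H$-invariant functional on $\pi$ to one on $\tilde\pi$. This is almost certainly the argument in the cited reference as well, so there is nothing further to compare.
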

	\begin{lem}\upshape\cite{raghuram5541}\textbf{.}\label{11}
		Let $\pi$ and $\pi'$ be a smooth representation of $\D^{\times}$. Then 
		$$\Dim_{\mathbb{C}}\Ext_{\D^{\times}}^1(\pi , \pi') = 
		\begin{cases}
			0 & \text{if } \pi \not= \pi'\\
			1 & \text{if } \pi = \pi'
		\end{cases}
		.$$
	\end{lem}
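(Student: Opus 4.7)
The plan is to exploit the normal-subgroup structure $\mathcal{O}_\D^\times \triangleleft \D^\times$, where $\mathcal{O}_\D$ is the unique maximal order of the quaternion division algebra $\D$, with quotient $\D^\times/\mathcal{O}_\D^\times \cong \mathbb{Z}$ generated by the image of a uniformizer $\varpi_\D$. I would first observe that any irreducible smooth representation of $\D^\times$ is finite-dimensional, since $\D^\times/\K^\times$ is compact; consequently, under the (implicit) irreducibility hypothesis on $\pi$ and $\pi'$, the space $M := \Hom_{\mathcal{O}_\D^\times}(\pi, \pi')$ is a finite-dimensional $\mathbb{C}$-vector space carrying an action of $\mathbb{Z}$ by conjugation by $\varpi_\D$.

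Next, I would invoke the Hochschild--Serre spectral sequence
$$E_2^{p,q} = H^p\bigl(\mathbb{Z},\, \Ext^q_{\mathcal{O}_\D^\times}(\pi,\pi')\bigr) \;\Longrightarrow\; \Ext^{p+q}_{\D^\times}(\pi,\pi').$$
Because $\mathcal{O}_\D^\times$ is profinite, its category of smooth complex representations is semisimple, so $\Ext^q_{\mathcal{O}_\D^\times}(\pi,\pi') = 0$ for all $q \geq 1$. The spectral sequence therefore collapses and yields $\Ext^n_{\D^\times}(\pi,\pi') \cong H^n(\mathbb{Z}, M)$ for every $n$. From the standard resolution $0 \to \mathbb{Z}[\mathbb{Z}] \xrightarrow{T-1} \mathbb{Z}[\mathbb{Z}] \to \mathbb{Z} \to 0$ of the trivial $\mathbb{Z}$-module, I would identify $H^0(\mathbb{Z}, M)$ with the kernel and $H^1(\mathbb{Z}, M)$ with the cokernel of $T-1$ acting on $M$; since $M$ is finite-dimensional, these two spaces have equal dimension. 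Schur's lemma identifies $H^0(\mathbb{Z}, M)$ with $\Hom_{\D^\times}(\pi,\pi')$, which is $\mathbb{C}$ when $\pi \cong \pi'$ and zero otherwise, and the dimension equality then transports this to $H^1$, producing the claimed dichotomy.

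The step I expect to be most delicate is justifying the collapse of the Hochschild--Serre spectral sequence in the smooth category, in particular the vanishing of $\Ext^q_{\mathcal{O}_\D^\times}(\pi,\pi')$ in positive degree; this rests on the observation that smooth representations of a profinite group are direct limits of finite-dimensional representations of finite quotients, each of which is semisimple over $\mathbb{C}$. Once this foundation is in place, the remainder reduces to Schur's lemma together with the elementary fact that a finite-dimensional endomorphism has kernel and cokernel of equal dimension.
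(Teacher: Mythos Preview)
Your argument is correct and is the standard route to this result. Note, however, that the paper does not supply its own proof of this lemma: it is simply quoted from \cite{raghuram5541} without argument. So there is no ``paper's proof'' to compare against; your proposal is a self-contained verification of a cited fact.

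A few remarks on the proof itself. First, you rightly flag that the statement as written (``smooth representation'') tacitly requires $\pi,\pi'$ to be irreducible for the dichotomy $\pi\simeq\pi'$ versus $\pi\not\simeq\pi'$ to make sense and for Schur's lemma to apply; this is how the lemma is used throughout the paper. Second, your concern about the vanishing of $\Ext^q_{\mathcal{O}_\D^\times}$ for $q\geq 1$ is well-placed but easily resolved: $\mathcal{O}_\D^\times$ is compact open, so every smooth representation of it is injective (equivalently projective) in the smooth category, by averaging over open subgroups. This makes the collapse of Hochschild--Serre immediate. Third, the identification $\D^\times/\mathcal{O}_\D^\times\cong\mathbb{Z}$ via the reduced norm valuation, followed by the rank-nullity observation that $\ker(T-1)$ and $\operatorname{coker}(T-1)$ have the same dimension on a finite-dimensional space, is exactly the mechanism behind this lemma and is how such results are typically proved for groups with a compact-mod-center normal subgroup of cyclic quotient.
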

	\subsubsection{} The following lemmas provide  the classification of irreducible representations of $\GL_1(\D)$ and $\GL_2(\D)$, with a symplectic period.
	
	\begin{lem}\upshape\cite{Verma}\textbf{.}\label{12} Any finite-dimensional irreducible representation $\sigma$ of $\GL_1(\D)$ has a symplectic period if and only if $\sigma$ is a character of $\GL_1(\D)$.
	\end{lem}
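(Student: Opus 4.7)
The plan is to exploit the special structure of $\Sp_1(\D)$ inside $\GL_1(\D) = \D^{\times}$. From the matrix description in Section~2, specialized to $n=1$ with $J = (1)$, one has $\Sp_1(\D) = \{x \in \D^{\times} : x\bar{x} = 1\} = \ker(\N_{\D/\K})$. In particular $\Sp_1(\D)$ is normal in $\D^{\times}$, and the reduced norm induces an isomorphism $\D^{\times}/\Sp_1(\D) \cong \K^{\times}$, which is abelian. This is the key structural input.

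For the easy direction ($\Leftarrow$), suppose $\sigma$ is a character of $\D^{\times}$. Since $\D^{\times}/\Sp_1(\D)$ is abelian, one has $[\D^{\times}, \D^{\times}] \subseteq \Sp_1(\D)$, so every character of $\D^{\times}$ is trivial on $\Sp_1(\D)$. Thus any nonzero linear functional on the one-dimensional representation space of $\sigma$ is $\Sp_1(\D)$-invariant, giving a symplectic period.

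For the converse ($\Rightarrow$), suppose $(\sigma, V)$ is finite-dimensional irreducible with $\Hom_{\Sp_1(\D)}(\sigma, \mathbb{C}) \neq 0$. Note that $\Sp_1(\D)$ is compact, as elements of reduced norm one automatically lie in the compact unit group $\mathcal{O}_{\D}^{\times}$; hence $\sigma|_{\Sp_1(\D)}$ is semisimple, and existence of a nonzero $\Sp_1(\D)$-invariant functional is equivalent to $V^{\Sp_1(\D)} \neq 0$. The subspace $V^{\Sp_1(\D)}$ is $\D^{\times}$-stable by the standard normality argument: for $v \in V^{\Sp_1(\D)}$, $g \in \D^{\times}$, and $h \in \Sp_1(\D)$, normality gives $g^{-1}hg \in \Sp_1(\D)$, and therefore
\[
\sigma(h)\sigma(g)v = \sigma(g)\sigma(g^{-1}hg)v = \sigma(g)v.
\]
Irreducibility of $\sigma$ forces $V^{\Sp_1(\D)} = V$, so $\Sp_1(\D)$ acts trivially.

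Consequently $\sigma$ factors through the abelian quotient $\D^{\times}/\Sp_1(\D) \cong \K^{\times}$, and an irreducible representation of an abelian group is one-dimensional, so $\sigma$ is a character. I foresee no substantive obstacle: the proof rests entirely on the identification $\Sp_1(\D) = \ker(\N_{\D/\K})$, the normality with abelian quotient, and compactness of $\Sp_1(\D)$ to guarantee semisimplicity.
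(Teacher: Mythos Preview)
Your proof is correct. The identification $\Sp_1(\D)=\{a\in\D^\times: a\bar a=1\}=\ker(\N_{\D/\K})$ follows directly from the matrix definition with $J=(1)$ and the fact that $a\bar a=\N_{\D/\K}(a)$ for the standard quaternionic involution; normality and the abelian quotient $\D^\times/\Sp_1(\D)\hookrightarrow\K^\times$ then drive both directions as you wrote. Compactness of $\Sp_1(\D)$ (a closed subgroup of $\mathcal{O}_\D^\times$) legitimately gives semisimplicity of $\sigma|_{\Sp_1(\D)}$, so the passage from a nonzero invariant functional to a nonzero invariant vector is justified, and the normality/irreducibility argument finishes cleanly.

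Note, however, that the paper does not actually prove this lemma: it is quoted from \cite{Verma} without argument. So there is no ``paper's own proof'' to compare against here. Your argument is a perfectly good self-contained proof and matches what one would expect the original reference to contain; the only cosmetic point is that for the easy direction you do not need the full identification $\D^\times/\Sp_1(\D)\cong\K^\times$ (surjectivity of the reduced norm), only that the quotient is abelian, which you in fact use.
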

	
	\begin{lem}\upshape\cite{Verma}\textbf{.}\label{13}
		Let $\sigma$ be an irreducible admissible representation of $\GL_2(\D)$ with a symplectic period. Then $\sigma$ is one of the following: 
		\begin{enumerate}
			\item[\upshape(1)] Any character of $\GL_2(\D)$.
			
			\item[\upshape(2)] $\sigma_1 \times \sigma_2$, where $\sigma_1$ and $\sigma_2$ are characters of $\GL_1(\D)$. 
			
			\item[\upshape(3)] Speh representation $\Sp_2(\mu)$, where $\mu$ is a representation of $\GL_1(\D)$ with $\Dim(\mu)>1$.
			
		\end{enumerate}
	\end{lem}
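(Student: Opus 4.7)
My plan is to combine structure theory for irreducible admissible representations of $\GL_2(\D)$ with a Mackey-theoretic analysis of $\Sp_2(\D)$ acting on the flag variety $\GL_2(\D)/P_{1,1}$. The starting reductions are: (i) by Prasad's conjecture for even $n$ verified in \cite{Verma}, no supercuspidal representation of $\GL_2(\D)$ is $\Sp_2(\D)$-distinguished; (ii) every character of $\GL_2(\D)$ factors through the reduced norm and is therefore automatically distinguished, since $\Sp_2(\D) \subset \ker \N_{\D/\K}$, giving case (1). Every remaining irreducible admissible representation is an irreducible subquotient of some $\sigma_1 \times \sigma_2$ with $\sigma_1,\sigma_2 \in \Irr(\GL_1(\D))$, so the task reduces to computing $\Hom_{\Sp_2(\D)}(\sigma_1 \times \sigma_2,\mathbb{C})$ and then using Lemma \ref{9} to locate the irreducible subquotient that carries the functional.

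For the geometric step, the double coset space $\Sp_2(\D) \backslash \GL_2(\D)/P_{1,1}$ has exactly two orbits, corresponding to isotropic (closed orbit) and non-isotropic (open orbit) lines in $V_2$. After identifying each orbit stabilizer and tracking the modular characters carefully, I would obtain a short exact sequence of $\Sp_2(\D)$-modules whose outer pieces are compactly induced from those stabilizers, then apply $\Hom_{\Sp_2(\D)}(-,\mathbb{C})$ and Frobenius reciprocity. The closed-orbit contribution produces distinction when $\sigma_2$ matches $\tilde{\sigma}_1$ up to an explicit modular twist, while the open-orbit contribution produces a $\{a \in \D^{\times} : \bar{a}a = 1\}$-equivariance condition.

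Then I would invoke Lemma \ref{11} on the connecting $\Ext^1_{\D^{\times}}$ term in the resulting long exact sequence to prevent spurious distinguished subquotients from appearing when the two Hom-spaces fail to add up. A final case sweep against Theorem \ref{5} yields the trichotomy: if $\sigma_1 \times \sigma_2$ is already irreducible with both $\sigma_1,\sigma_2$ characters, one gets case (2); if $\sigma_1,\sigma_2$ are linked characters, only the character subquotient survives and it is already covered by case (1), while the Steinberg subquotient $\chi_2 St_2$ is not distinguished; and if $\sigma_1 = \nu^{-1/2}\mu$, $\sigma_2 = \nu^{1/2}\mu$ with $\Dim(\mu) > 1$, the Speh submodule $\Sp_2(\mu)$ inherits the functional from the closed orbit while the quotient $St_2(\mu)$ does not, giving case (3).

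The main obstacle will be the precise bookkeeping of modular twists through the Mackey filtration, together with checking that the $\Ext^1$ obstruction from Lemma \ref{11} vanishes in exactly the right configurations so that the Speh/Steinberg dichotomy falls out the right way; ruling out $\chi_2 St_2$ and $St_2(\mu)$ requires showing that the relevant quotients push their distinguishing functional to zero on the open orbit, which is where the most delicate computation lies.
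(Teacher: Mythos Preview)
The paper does not give its own proof of this lemma: it is imported directly from \cite{Verma} (the second author's earlier work), where the $n=2$ case was settled. So there is no in-paper argument to compare against.

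That said, your proposed approach is exactly the Mackey-theoretic template that the present paper applies to $n=3$ and $n=4$ in Sections~3--5, and is presumably how \cite{Verma} treats $n=2$: two $\Sp_2(\D)$-orbits on $\GL_2(\D)/P_{1,1}$, the open-orbit stabilizer $\Sp_1(\D)\times\Sp_1(\D)$ forcing both $\sigma_i$ to be characters via Lemma~\ref{12}, the closed-orbit stabilizer $\Delta\GL_1(\D)\cdot U$ giving a twist condition of the shape $\sigma_1\simeq\nu\sigma_2$, and Lemma~\ref{11} controlling the connecting $\Ext^1$. Your case sweep is essentially complete; the one configuration you leave implicit is an irreducible $\sigma_1\times\sigma_2$ with at least one $\dim\sigma_i>1$, but that is immediately excluded since neither orbit condition can hold there.

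One small imprecision: saying the Speh submodule ``inherits the functional from the closed orbit'' is not quite how the argument runs. The closed-orbit piece is a \emph{quotient} in the Mackey filtration, so its functional pulls back to the full induced $\pi$, and one still has to locate which irreducible subquotient of $\pi$ carries it. The cleaner route, mirroring the paper's handling of the analogous situations for $n=3,4$, is to show $\pi$ is distinguished, then rule out $St_2(\mu)$ by realizing it as the quotient of the reverse-order induction $\nu^{1/2}\mu\times\nu^{-1/2}\mu$ (which fails both orbit conditions), and conclude for $\Sp_2(\mu)$ via Lemma~\ref{9}.
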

	\section{Orbit Structures and Stabilizers}
	This section focuses on the double cosets  $H \backslash G / P$ for  $G = \GL_n(\D)$, $H = \Sp_n(\D)$, and  maximal parabolic subgroup $P = P_{k,n-k}$ of $\GL_n(\D)$, where $n=3,4$.
	\subsection{$\Sp_3(\D)$-orbits in $\GL_3(\D)/P_{2,1}$}  We first explore the structure of $\Sp_3(\D) \backslash  \GL_3(\D)/P_{2,1}$.
	\subsubsection{} Let $V$ be a $3$-dimensional Hermitian right $\D$-vector space with a basis $\{e_1,e_2,e_3\}$ of $V$ with $(e_1,e_1) = (e_1,e_2) = (e_2,e_3) = (e_3,e_3) = 0 $ and $(e_1,e_3) = (e_2,e_2) = 1$. Let $X$ be the set of all $2$-dimensional $\D$-subspaces of $V$. The group $G = \GL_{\D}(V)$ acts naturally on $V$ and induces a transitive action on $X$. Hence, the stabilizer in $\GL_3(\D)$ of the plane $W = \langle (e_1,e_2) \rangle$ inside $X$ is a parabolic subgroup $P_{2,1}$ in $\GL_3(\D)$, with $X \simeq G / P$. 
	
	\subsubsection{Orbits and stabilizers}\label{14} Our aim is to study the space $\Sp_3(\D) \backslash \GL_3(\D)/ P_{2,1}$. Since the flag variety $X$ is in bijection with $\GL_3(\D)/ P_{2,1}$ and by Witt's theorem  {\cite{MVW}} the action of $\Sp_3(\D)$ on maximal isotropic subspaces $W \in X$ is transitive, therefore, this leads to the formation of two distinct orbits, namely $O_1 = \{W \in X ~|~ \Dim \Rad J\mid_{X} = 1 \}$ and $O_2 = \{W \in X ~|~ \Dim \Rad J\mid_{X} = 0 \}$.
	
	 Consider the plane $\langle (e_1,e_2) \rangle$ inside $O_1$. Then the stabilizer in $\Sp_3(\D)$ of the plane $\langle (e_1,e_2) \rangle$  is the parabolic subgroup
	\begin{multline*}
		P_{H} = \Bigg\{
		\begin{pmatrix}
			a & b & c \\
			0 & e & f \\ 
			0 & 0 & \overline{a}^{-1} 
		\end{pmatrix} \Bigg{|}~ a,e \in \D^{\times};~b,c,f \in \D; ~ e\overline{e} =1 ,~ a\overline{f}+b\overline{e}=0,~ a\overline{c}+c\overline{a}+b\overline{b}=0 \Bigg\}.
	\end{multline*}
	 of $\Sp_3(\D)$ with Levi decomposition $P_{H} = M_{H} U_{H}$, where 
	\begin{equation*}
		M_{H} = \Bigg{\{}\begin{pmatrix}
			a & 0 & 0 \\
			0 & e & 0 \\ 
			
			0 & 0 & \overline{a}^{-1} 
		\end{pmatrix} \Bigg{|} a \in \GL_1(\D),~e \in \Sp_1(\D) \Bigg{\}} \simeq \Delta(\GL_1(\D) \times \GL_1(\D)) \times \Sp_1(\D).
	\end{equation*}
	
	Now, consider the plane $\langle (e_1 + e_3, ~ e_1 - e_3) \rangle $ inside $O_2$. If an isometry of $V$ stabilizes the plane $\langle (e_1 + e_3, ~ e_1 - e_3) \rangle $ in $\Sp_3(\D)$, then it also stabilizes their orthogonal complement $\langle e_2 \rangle $. Hence, the stabilizer of the plane $\langle (e_1 + e_3, ~ e_1 - e_3) \rangle $ in $\Sp_3(\D)$ stabilizes the orthogonal decomposition of $V$ as
	$V = \langle (e_1 + e_3, ~ e_1 - e_3) \rangle  \oplus \langle e_2 \rangle $.
	Thus, the stabilizer in $\Sp_3(\D)$ of the plane $\langle (e_1 + e_3, ~ e_1 - e_3) \rangle $ is $ \Sp_2(\D) \times \Sp_1(\D)$ sitting in a natural way in the Levi $\GL_2(\D) \times \GL_1(\D)$ of the parabolic $P_{2,1}$ in $\GL_3(\D)$.
	
	\subsection{$\Sp_4(\D)$-orbits in $\GL_4(\D)/ P_{k,4-k}$} Now, we delve into the systematic analysis of $\Sp_4(\D)$-orbits in  $P_{k,4-k} \backslash \GL_4(\D)$, where $k = 1,2$.
	
	\subsubsection{}\label{55} Let $V$ be a $4$-dimensional Hermitian right $\D$-vector space with a basis $\{e_1,e_2,e_3,e_4\}$ of $V$ with $(e_i,e_{5-i}) = 1$ otherwise $0$. Let $X$ be the set of all $k$-dimensional $\D$-subspaces of $V$. Since the group $\GL_4(\D)$ acts transitively on $X$, therefore, the stabilizer in $\GL_4(\D)$ of the line $W = \langle e_1,\ldots,e_k \rangle $ inside $X$  is a parabolic subgroup $P_{k,4-k}$, with $X \simeq \GL_4(\D)/ P_{k,4-k}$.
	
	\subsubsection{Orbits and stabilizers}\label{16}
	The transitive nature of $\Sp_4(\D)$ on maximal isotropic subspaces implies the existence of $(k+1)$-distinct orbits, denoted as $O_r =  \{ W \in X ~|~ \Dim \Rad J\mid_{X} = r \}$ where $0 \leq r \leq k$. Consider $T_k = \langle e_1,\ldots,e_k\rangle$ inside $O_k$ and $$T_r = \langle(e_1, \ldots, e_r,e_{r+1}+ e_{n-r},\ldots, e_k+e_{n-k+1})\rangle$$ inside $O_r$, where $0 \leq r \leq (k-1)$. Then we have the following proposition.
	\begin{prop}\label{15}
		The subgroup $H_{k,r}$ of $\Sp_4(\D)$ stabilizing the subspaces $T_{r}$ of $V$ is of the form 
		$$M_{k,r} U_{k,r} = (\Delta (\GL_r(\D) \times \GL_r(\D)) \times \Sp_{k-r}(\D) \times \Sp_{4-k-r}(\D))\cdot U_{k,r},$$
		where $0 \leq r \leq k$, $\Delta (\GL_r(\D) \times \GL_r(\D)) \simeq \{(g ,~ ^t\overline{g}^{-1}) ~|~ g \in \GL_r(\D)\}$, and $U_{k,r}$ is the unipotent subgroup inside $\Sp_4(\D)$. 
	\end{prop}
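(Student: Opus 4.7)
The plan is to proceed by a direct block-matrix analysis in coordinates adapted to $T_r$, guided by the Witt-theoretic structure of the Hermitian space. From the form $(e_i,e_{5-i}) = 1$ (and zero otherwise), I would first observe that $T_r$ decomposes orthogonally as $L \perp T_r^{\mathrm{nd}}$, where $L = \langle e_1,\ldots,e_r\rangle$ is totally isotropic of dimension $r$ and is intrinsically the radical of the restricted form, while $T_r^{\mathrm{nd}} = \langle e_{r+1}+e_{n-r},\ldots,e_k+e_{n-k+1}\rangle$ is non-degenerate of dimension $k-r$ (the cross-pairings vanish because no pair of indices involved sums to $5$). Since the radical is preserved by any isometry, every $g \in H_{k,r}$ automatically preserves the isotropic flag $0 \subset L \subset T_r \subset T_r + L^{*}$ inside $V$, where $L^{*} = \langle e_{n-r+1},\ldots,e_n\rangle$ is a hyperbolic (isotropic) complement of $L$ dually paired with it.

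Next, I would use the orthogonal decomposition $V = (L \oplus L^{*}) \perp W$ with $W$ non-degenerate of dimension $4-2r$, and split $W = T_r^{\mathrm{nd}} \perp W'$ where $W'$ is the non-degenerate orthogonal complement of $T_r^{\mathrm{nd}}$ in $W$, of dimension $4-k-r$. Writing $g \in \Sp_4(\D)$ in block form relative to $V = L \oplus W \oplus L^{*}$, the symplectic relation $g J\,^{t}\overline{g} = J$ together with $gL = L$ and the requirement $g T_r^{\mathrm{nd}} \equiv T_r^{\mathrm{nd}} \pmod{L}$ forces the Levi part to be block-diagonal of the shape $(A, s_1, s_2, (\,^{t}\overline{A})^{-1})$ with $A \in \GL_r(\D)$, $s_1 \in \Sp_{k-r}(\D)$ acting on $T_r^{\mathrm{nd}}$, and $s_2 \in \Sp_{4-k-r}(\D)$ acting on $W'$. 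This produces exactly the Levi factor $\Delta(\GL_r(\D) \times \GL_r(\D)) \times \Sp_{k-r}(\D) \times \Sp_{4-k-r}(\D)$ asserted in the statement.

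The remaining off-diagonal freedom, consisting of shears sending $L^{*}$ into $W \oplus L$ and $W$ into $L$ while respecting both the Hermitian relation and the stabilization of $T_r^{\mathrm{nd}} \pmod{L}$, constitutes the unipotent radical $U_{k,r}$, giving the claimed Levi decomposition $H_{k,r} = M_{k,r} U_{k,r}$. The main technical obstacle is the bookkeeping for the diagonal embedding $\Delta(\GL_r(\D) \times \GL_r(\D))$: one must verify that the action $A$ on $L$ is rigidly coupled with $(\,^{t}\overline{A})^{-1}$ on $L^{*}$ via preservation of the Hermitian pairing between them, and further check that the block entries representing $U_{k,r}$ satisfy the Hermiticity conditions required to land inside $\Sp_4(\D)$. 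Once these block-matrix constraints are unwound, the remaining steps reduce to standard dimension counting and cross-checking against the well-known Levi decomposition of the parabolic of $\Sp_4(\D)$ stabilizing an isotropic subspace of dimension $r$, refined by the additional stabilization of the anisotropic piece $T_r^{\mathrm{nd}}$.
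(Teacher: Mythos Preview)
Your approach is correct and would produce a complete proof. The paper itself gives no proof of this proposition; it is simply stated. The only visible indication of the authors' method is the analogous computation for $n=3$ in \S3.1.2, where the stabilizers are written out as explicit sets of $3\times 3$ matrices over $\D$ satisfying the Hermitian relations, case by case for each orbit.

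Compared with that explicit-matrix approach, your argument is more uniform and more conceptual: it explains structurally why the Levi factor is $\Delta(\GL_r)\times \Sp_{k-r}\times \Sp_{4-k-r}$ by identifying each piece with the action on $L$ (the radical of the form restricted to $T_r$), on the anisotropic part $T_r^{\mathrm{nd}}$, and on its orthogonal complement $W'$ inside $L^{\perp}/L$, respectively. The paper's method, by contrast, is purely computational but has the virtue of displaying the unipotent radical $U_{k,r}$ concretely, which your sketch leaves implicit. One small point to clean up: the flag $0\subset L\subset T_r\subset T_r+L^{*}$ is not the one canonically preserved by isometries stabilizing $T_r$, since $L^{*}$ is a non-canonical choice of isotropic complement; what is automatically preserved is $L$, $T_r$, $T_r^{\perp}$, and $L^{\perp}$, and your block decomposition should be phrased in terms of these. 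This does not affect the substance of the argument.
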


	\section{Distinguished representations of $\GL_3(\D)$}
	
	This section analyzes the non-supercuspidal irreducible admissible representations of $\GL_3(\D)$ with a symplectic period and proves Theorem \ref{2}.
	
	\subsection{Necessary and sufficient conditions on distinction for a  representation of $\GL_3(\D)$}  
	\subsubsection{} We begin with the following lemma. 
	\begin{lem}\label{20}
		Let $\theta$ be a non-supercuspidal irreducible admissible representation of $\GL_3(\D)$ with a symplectic period. Then $\theta$ appears as a quotient of $\sigma_1 \times \sigma_2$, where $\sigma_1 \in \Irr(\GL_2(\D))$ and $\sigma_2 \in \Irr(\GL_1(\D))$.
	\end{lem}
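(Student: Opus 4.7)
The plan is to combine Bernstein's second adjointness with a cuspidal-support argument. Since $\theta$ is irreducible and non-supercuspidal, its cuspidal support is a multiset of supercuspidal representations on Levi factors of total $\GL$-degree $3$, containing at least two constituents. The support therefore falls into one of two cases: (i) a pair $(\rho,\chi)$ with $\rho$ a supercuspidal representation of $\GL_2(\D)$ and $\chi$ a character of $\GL_1(\D)$, or (ii) a triple $(\chi_1,\chi_2,\chi_3)$ of characters of $\GL_1(\D)$.

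The crux is to establish the non-vanishing of the Jacquet module $r_{N_{2,1}^-}(\theta)$. In case~(i), the singleton segments $[\rho]$ and $[\chi]$ sit over distinct supercuspidal bases (of $\GL_2(\D)$ and $\GL_1(\D)$ respectively), so they are unlinked; Theorem~\ref{7} then gives $\rho\times\chi$ irreducible, and matching cuspidal supports forces $\theta\cong\rho\times\chi$. In particular, $\rho\otimes\chi$ embeds into $r_{N_{2,1}^-}(\theta)$ via second adjointness. In case~(ii), Langlands' classification exhibits $\theta$ as an irreducible quotient of some principal series $\chi_{\pi(1)}\times\chi_{\pi(2)}\times\chi_{\pi(3)}$, which by second adjointness forces $r_{N_{1,1,1}^-}(\theta)\neq 0$. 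Since $P_{1,1,1}\subset P_{2,1}$, transitivity of the Jacquet functor factors $r_{N_{1,1,1}^-}^G$ through $r_{N_{2,1}^-}^G$, so the former being nonzero forces $r_{N_{2,1}^-}(\theta)\neq 0$.

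Granted $r_{N_{2,1}^-}(\theta)\neq 0$, this is an admissible finite-length representation of $\GL_2(\D)\times\GL_1(\D)$, so it contains an irreducible subrepresentation $\sigma_1\otimes\sigma_2$ with $\sigma_1\in\Irr(\GL_2(\D))$ and $\sigma_2\in\Irr(\GL_1(\D))$. Bernstein's second adjointness then yields
\[
\Hom_{\GL_3(\D)}\bigl(\sigma_1\times\sigma_2,\,\theta\bigr) \;=\; \Hom_{\GL_2(\D)\times\GL_1(\D)}\bigl(\sigma_1\otimes\sigma_2,\,r_{N_{2,1}^-}(\theta)\bigr) \;\neq\; 0,
\]
and irreducibility of~$\theta$ promotes any such nonzero map into a surjection, exhibiting~$\theta$ as a quotient of $\sigma_1\times\sigma_2$ as required.

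The main technical point is the non-vanishing of $r_{N_{2,1}^-}(\theta)$ in case~(ii), which rests on the transitivity of Jacquet functors; everything else is formal. I note that the symplectic-period hypothesis plays no role in this argument---the conclusion holds for every irreducible non-supercuspidal representation of $\GL_3(\D)$---and is retained in the statement because the lemma is applied in the subsequent sections only to representations carrying a symplectic period.
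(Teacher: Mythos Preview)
Your proof is correct, but it takes a genuinely different route from the paper's argument. The paper's proof is a two-line reduction: any non-supercuspidal irreducible $\theta$ is a quotient of either $\sigma_1 \times \sigma_2$ or $\sigma_2 \times \sigma_1$; in the second case one passes to the contragredient $\tilde{\theta}$, which is then a quotient of $\tilde{\sigma_1} \times \tilde{\sigma_2}$, and invokes Lemma~\ref{10} (preservation of the symplectic period under contragredient) to justify replacing $\theta$ by $\tilde{\theta}$. Thus the paper's argument \emph{does} use the symplectic-period hypothesis, and in the second case it really establishes the conclusion for $\tilde{\theta}$ rather than for $\theta$ itself---which suffices for the subsequent classification since the final list is closed under contragredient.

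By contrast, your argument via Bernstein's second adjointness and the opposite Jacquet module $r_{N_{2,1}^-}(\theta)$ proves the lemma as literally stated, for $\theta$ itself, and without any appeal to the symplectic period. The cuspidal-support dichotomy you set up is clean; the only point worth tightening is the sentence ``Langlands' classification exhibits $\theta$ as an irreducible quotient of some principal series'': strictly speaking Langlands gives $\theta$ as the unique quotient of a product of $L(\Delta_i)$'s, and one then uses that each $L(\Delta_i)$ is itself the unique quotient of the corresponding principal-series block (together with exactness of parabolic induction) to descend to a full principal series. Your approach buys a slightly stronger and more self-contained statement; the paper's buys brevity at the cost of relying on Lemma~\ref{10} and an implicit passage to the contragredient.
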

	\begin{proof}
		If $\theta$ is a non-supercuspidal irreducible admissible representation of $\GL_3(\D)$, then $\theta$ appears as a quotient of either $\sigma_1 \times \sigma_2$ or $\sigma_2 \times \sigma_1$, where $\sigma_1 \in \Irr(\GL_2(\D))$ and $\sigma_2 \in \Irr(\GL_1(\D))$. The first case is trivial. If $\theta$ is a quotient of $\sigma_2 \times \sigma_1$, then $\tilde{\theta}$ is  a quotient of $\tilde{\sigma_1} \times \tilde{\sigma_2}  $. By Lemma \ref{10}, we are reduced to the first case. 
	\end{proof}

	\subsubsection{Mackey theory}\label{21} By Lemma \ref{20}, we apply the Mackey theory to the representations of the form $\sigma_1 \times \sigma_2$, where $\sigma_1 \in \Irr(\GL_2(\D))$ and $\sigma_2 \in \Irr(\GL_1(\D))$. 
	
	Given the representation $\pi = \sigma_1 \times \sigma_2 := \Ind_{P_{2,1}}^{\GL_3(\D)}\sigma$ of $\GL_3(\D)$, where $\sigma = \sigma_1 \otimes \sigma_2$ is an irreducible representation of $\GL_2(\D) \times \GL_1(\D)$, consider the restriction of $\pi$ to $\Sp_3(\D)$.
	 By applying Mackey theory and $\S$\ref{14}, we obtain the following exact sequence of $\Sp_3(\D)$-representations 
	$$0\rightarrow \ind_{\Sp_2(\D) \times \Sp_1(\D)}^{\Sp_3(\D)}[(\sigma_1 \otimes \sigma_2)|_{\Sp_2(\D) \times \Sp_1(\D)}] \rightarrow \pi \rightarrow  \Ind_{P_{H}}^{\Sp_3(\D)} \delta_{P_{H}}^{1/2}[(\sigma_1 \otimes \sigma_2)|_{M_{H}}] \rightarrow 0,$$
	where $\delta_{P_{H}}$ is the character on $P_{H}$ given by 
	\begin{equation*}
		\delta_{P_{H}} \Bigg{[}\begin{pmatrix}
			a & b & c \\
			0 & e & f \\ 
			
			0 & 0 & \overline{a}^{-1} 
		\end{pmatrix}\Bigg{]} = |\N_{\D/\K}(a)|_{\K}^{6}.
	\end{equation*}
	
	\subsubsection{Analysis by the open and closed orbit}\label{23}
	Suppose $\pi$ has a non-zero $\Sp_3(\D)$-invariant linear form. Then one of the representations in the above exact sequence,
	$$\ind_{\Sp_2(\D) \times \Sp_1(\D)}^{\Sp_3(\D)}[(\sigma_1 \otimes \sigma_2)|_{\Sp_2(\D) \times \Sp_1(\D)}]~\text{or}~\Ind_{P_{H}}^{\Sp_3(\D)} \nu^{3}[(\sigma_1 \otimes \sigma_2)|_{M_{H}}],$$
	must have an $\Sp_3(\D)$-invariant form.
	 Consider the case when 
	$$\Hom_{\Sp_3(D)}(\ind_{\Sp_2(\D) \times \Sp_1(\D)}^{\Sp_3(\D)}[(\sigma_1 \otimes \sigma_2)|_{\Sp_2(\D) \times \Sp_1(\D)}],\mathbb{C}) \not= 0.$$
	Then by Frobenius reciprocity, it is equivalent to 
	$$\Hom_{\Sp_2(\D) \times \Sp_1(\D)}((\sigma_1 \otimes \sigma_2),\mathbb{C}) \not= 0,$$
	and hence
	$$\Hom_{\Sp_2(\D)}(\sigma_1,\mathbb{C})~\otimes~ \Hom_{\Sp_1(\D)}(\sigma_2,\mathbb{C}) \not= 0.$$
	It follows that the representations $\sigma_1$ and $\sigma_2$ have a symplectic period. 
	
	 Now, assume that $$\Hom_{\Sp_3(\D)}(\Ind_{P_{H}}^{\Sp_3(\D)} \nu^{3}[(\sigma_1 \otimes \sigma_2)|_{M_{H}}],\mathbb{C}) \not= 0.$$ 
	Then Frobenius reciprocity implies that the latter space is non-zero if and only if
	\begin{equation}\label{57}
	\Hom_{P_{H}}(\nu^{3}[(\sigma_1 \otimes \sigma_2)|_{M_{H}}] ~,  \nu^5) \not= 0.
	\end{equation}
	Clearly, 
	$$\Hom_{M_{H}U_{H}}(\nu^{-2}[(\sigma_1 \otimes \sigma_2)|_{M_{H}}] ~,  \mathbb{C}) = \Hom_{M_{H}N}(\nu^{-2}[(\sigma_1 \otimes \sigma_2)|_{M_{H}}] ~, \mathbb{C}),$$
	where $N = N_1 \times N_2$; $N_1 , N_2$ are the unipotent subgroups of $\GL_2(\D)$ and $\GL_1(\D)$ corresponding to the partition $(1,1)$ and $(0,1)$, respectively. With the normalized Jacquet functor being left adjoint to normalized induction (Proposition $1.9$(b) in \cite{BZ}), we obtain 
	\begin{equation}\label{58}
	\Hom_{M_{H}N}(\nu^{-2}[(\sigma_1 \otimes \sigma_2)|_{M_{H}}] ~, \mathbb{C}) = \Hom_{M_{H}}(r_N(\nu^{-2}\sigma_1 \otimes \sigma_2)~ , \delta_N^{-1/2}).
	\end{equation}
 The right-hand side of $(\ref{58})$ is equivalent to $\Hom_{M_{H}}(\nu^{-2} \delta_{N_1}^{1/2}r_{N_1}(\sigma_1) \otimes \sigma_2 ~, \mathbb{C})$.
	Since $e \in \Sp_1(\D)$, it is easy to see that
	$$\delta_{N_1} \begin{pmatrix}
		a & 0  \\
		0 & e   
	\end{pmatrix} = |\N_{\D|\K}(a)|_{\K}.$$
	Therefore, it is easy to observe that the space in $(\ref{57})$ is non-zero if and only if 
	$$\Hom_{\Delta(\GL_1(\D) \times \GL_1(\D)) \times \Sp_1(\D)}(\nu^{-1}r_{(1,1),(2)}(\sigma_1) \otimes \sigma_2~,\mathbb{C}) \not= 0,$$
	where $\GL_1(\D)$ acts on $\sigma_2$ (last representation) via the contragredient. 
	\subsubsection{}
	From the analysis in $\S$\ref{23}, we deduce the following propositions 
	which provide necessary and sufficient conditions on distinction for a parabolically induced representation of $\GL_3(\D)$.
	\begin{prop} \label{25}
		If the representation 
		$$\pi = \sigma_1 \times \sigma_2 := \Ind_{P_{2,1}}^{\GL_3(\D)}(\sigma_1 \otimes \sigma_2)$$ has an $\Sp_3(\D)$-invariant linear form, then either
		\begin{enumerate}
			\item[\upshape(1)]  $\Hom_{\Delta(\GL_1(\D) \times \GL_1(\D)) \times \Sp_1(\D)}(\nu^{-1}r_{(1,1),(2)}(\sigma_1) \otimes \sigma_2~,\mathbb{C}) \not= 0$~(\text{closed orbit condition}) or 
			
			\item[\upshape(2)]   $\sigma_1$ and $\sigma_2$ have a symplectic period~(\text{open orbit condition}).
		\end{enumerate}
	\end{prop}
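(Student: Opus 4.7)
The plan is to carry out the analysis already initiated in \S\ref{23} and package it into the dichotomy asserted by the proposition. First, I would recall the short exact sequence of $\Sp_3(\D)$-modules produced in \S\ref{21} via Mackey theory applied to $\pi|_{\Sp_3(\D)}$, namely
\[
0 \to \ind_{\Sp_2(\D)\times\Sp_1(\D)}^{\Sp_3(\D)}\!\bigl[(\sigma_1\otimes\sigma_2)|_{\Sp_2(\D)\times\Sp_1(\D)}\bigr] \to \pi \to \Ind_{P_H}^{\Sp_3(\D)}\!\delta_{P_H}^{1/2}\bigl[(\sigma_1\otimes\sigma_2)|_{M_H}\bigr] \to 0,
\]
which corresponds to the decomposition of $\Sp_3(\D)\backslash\GL_3(\D)/P_{2,1}$ into the open orbit (with stabilizer $\Sp_2(\D)\times\Sp_1(\D)$) and the closed orbit (with stabilizer $P_H$), as computed in \S\ref{14}.

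Next, applying the left exact functor $\Hom_{\Sp_3(\D)}(-,\mathbb{C})$ to this sequence, the hypothesis that $\pi$ is $\Sp_3(\D)$-distinguished forces at least one of the two outer terms to carry a nonzero $\Sp_3(\D)$-invariant linear form. For the submodule coming from the open orbit, Frobenius reciprocity for compact induction yields
\[
\Hom_{\Sp_3(\D)}\bigl(\ind_{\Sp_2(\D)\times\Sp_1(\D)}^{\Sp_3(\D)}[(\sigma_1\otimes\sigma_2)|_{\Sp_2(\D)\times\Sp_1(\D)}],\mathbb{C}\bigr) \;\cong\; \Hom_{\Sp_2(\D)}(\sigma_1,\mathbb{C})\otimes\Hom_{\Sp_1(\D)}(\sigma_2,\mathbb{C}),
\]
whose nonvanishing is exactly the open orbit condition (2).

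For the quotient coming from the closed orbit, the idea is to combine Frobenius reciprocity for $\Ind_{P_H}^{\Sp_3(\D)}$ with second adjointness for the Jacquet functor. Frobenius reciprocity reduces the question to the nonvanishing of $\Hom_{P_H}\bigl(\delta_{P_H}^{1/2}[(\sigma_1\otimes\sigma_2)|_{M_H}],\delta_{P_H}\bigr)$; a direct computation using the formula for $\delta_{P_H}$ displayed in \S\ref{21} rewrites this as
\[
\Hom_{M_H U_H}\bigl(\nu^{-2}[(\sigma_1\otimes\sigma_2)|_{M_H}],\mathbb{C}\bigr).
\]
Since $U_H$ acts trivially on the target and contains the unipotent radical $N = N_1\times N_2$ of the relevant parabolic of $\GL_2(\D)\times\GL_1(\D)$, the adjunction from Proposition~$1.9$(b) of \cite{BZ} converts this into
\[
\Hom_{M_H}\bigl(r_N(\nu^{-2}\sigma_1\otimes\sigma_2),\delta_N^{-1/2}\bigr).
\]
A short character computation (using $\delta_{N_1}(\diag(a,e)) = |\N_{\D/\K}(a)|_{\K}$ for $e\in\Sp_1(\D)$, so that the factor $\delta_N^{1/2}$ absorbed by the normalization exactly cancels against $\nu^{-2}$ modulo $\nu^{-1}$) then rewrites the condition as
\[
\Hom_{\Delta(\GL_1(\D)\times\GL_1(\D))\times\Sp_1(\D)}\bigl(\nu^{-1} r_{(1,1),(2)}(\sigma_1)\otimes\sigma_2,\mathbb{C}\bigr)\neq 0,
\]
with $\GL_1(\D)$ acting on $\sigma_2$ through the contragredient due to the embedding $a\mapsto(a,\overline{a}^{-1})$ defining the diagonal Levi of $P_H$. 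This is precisely condition (1).

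The only delicate point, and the main place where care is needed, is the bookkeeping of modular characters: keeping track of $\delta_G$, $\delta_{P_{2,1}}$, $\delta_{P_H}$, $\delta_{N_1}$, and the normalization factors introduced by $\Ind$ versus $\ind$ and by the normalized Jacquet functor, so that the final twist collapses to $\nu^{-1}$. Once these bookkeeping identities are verified, the dichotomy stated in the proposition follows immediately from the Mackey filtration.
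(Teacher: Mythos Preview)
Your proposal is correct and follows essentially the same route as the paper: the argument is precisely the Mackey-theoretic analysis carried out in \S\ref{21}--\S\ref{23}, namely the short exact sequence coming from the open/closed orbit decomposition, Frobenius reciprocity on each piece, and the Jacquet-functor adjunction from \cite[Proposition~1.9(b)]{BZ} together with the modular character computation for the closed orbit. The paper's ``proof'' of the proposition is in fact just the sentence ``From the analysis in \S\ref{23}, we deduce the following propositions,'' so your write-up is a faithful expansion of exactly that analysis, including the identification of the modular-character bookkeeping as the only nontrivial step.
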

	
 
	\begin{prop}\label{26}
		The representation 
		$$\pi = \sigma_1 \times \sigma_2 := \Ind_{P_{2,1}}^{\GL_3(\D)}(\sigma_1 \otimes \sigma_2)$$ has an $\Sp_3(\D)$-invariant linear form if one of the following holds
		\begin{enumerate}
			\item[\upshape(1)] $\Hom_{\Delta(\GL_1(\D) \times \GL_1(\D)) \times \Sp_1(\D)}(\nu^{-1}r_{(1,1),(2)}(\sigma_1) \otimes \sigma_2~,\mathbb{C}) \not= 0$,
			\item[\upshape(2)]    $\sigma_1$ and $\sigma_2$ have a symplectic period with $$\Ext_{\Sp_3(D)}^1(\Ind_{P_{H}}^{\Sp_3(D)} \nu^{3}[(\sigma_1 \otimes \sigma_2)|_{M_{H}}] , \mathbb{C}) = 0.$$
		\end{enumerate} 
	\end{prop}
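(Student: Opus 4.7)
The approach is to exploit the short exact sequence of $\Sp_3(\D)$-representations produced by Mackey theory in \S\ref{21}, namely
\begin{equation*}
0 \longrightarrow \ind_{\Sp_2(\D) \times \Sp_1(\D)}^{\Sp_3(\D)}\!\bigl[(\sigma_1 \otimes \sigma_2)|_{\Sp_2(\D) \times \Sp_1(\D)}\bigr] \longrightarrow \pi \longrightarrow \Ind_{P_{H}}^{\Sp_3(\D)} \nu^{3}\bigl[(\sigma_1 \otimes \sigma_2)|_{M_{H}}\bigr] \longrightarrow 0,
\end{equation*}
and apply the left-exact contravariant functor $\Hom_{\Sp_3(\D)}(-,\mathbb{C})$. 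This yields the long exact sequence
\begin{equation*}
0 \to \Hom_{\Sp_3(\D)}(\mathrm{Ind}_{P_H}\cdots,\mathbb{C}) \to \Hom_{\Sp_3(\D)}(\pi,\mathbb{C}) \to \Hom_{\Sp_3(\D)}(\mathrm{ind}\cdots,\mathbb{C}) \to \Ext^1_{\Sp_3(\D)}(\mathrm{Ind}_{P_H}\cdots,\mathbb{C}) \to \cdots
\end{equation*}
The plan is to show that each of the two hypotheses (1) and (2) forces $\Hom_{\Sp_3(\D)}(\pi,\mathbb{C})\neq 0$ by making a different term on the right-hand side of this sequence non-zero and propagating the non-vanishing into the middle.

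Under hypothesis (1), the analysis of the closed-orbit term carried out in \S\ref{23} (via Frobenius reciprocity for the parabolic $P_H$, followed by the Jacquet-functor adjunction of Proposition $1.9$(b) in \cite{BZ} and an explicit computation of $\delta_{N_1}$) shows that
\[
\Hom_{\Sp_3(\D)}\bigl(\Ind_{P_{H}}^{\Sp_3(\D)} \nu^{3}[(\sigma_1 \otimes \sigma_2)|_{M_{H}}],\,\mathbb{C}\bigr) \cong \Hom_{\Delta(\GL_1(\D) \times \GL_1(\D)) \times \Sp_1(\D)}\bigl(\nu^{-1} r_{(1,1),(2)}(\sigma_1) \otimes \sigma_2,\,\mathbb{C}\bigr),
\]
which is non-zero by assumption. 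Since this space injects into $\Hom_{\Sp_3(\D)}(\pi,\mathbb{C})$ by left-exactness, we are done in this case.

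Under hypothesis (2), Frobenius reciprocity for compact induction (cf.\ \S\ref{23}) identifies
\[
\Hom_{\Sp_3(\D)}\bigl(\ind_{\Sp_2(\D) \times \Sp_1(\D)}^{\Sp_3(\D)}[(\sigma_1 \otimes \sigma_2)|_{\Sp_2(\D) \times \Sp_1(\D)}],\,\mathbb{C}\bigr) \cong \Hom_{\Sp_2(\D)}(\sigma_1,\mathbb{C}) \otimes \Hom_{\Sp_1(\D)}(\sigma_2,\mathbb{C}),
\]
and this is non-zero because both $\sigma_1$ and $\sigma_2$ are assumed to have a symplectic period. The vanishing assumption $\Ext^1_{\Sp_3(\D)}\bigl(\Ind_{P_{H}}^{\Sp_3(\D)} \nu^{3}[(\sigma_1 \otimes \sigma_2)|_{M_{H}}],\,\mathbb{C}\bigr)=0$ then forces the connecting map in the long exact sequence to be zero, so the map $\Hom_{\Sp_3(\D)}(\pi,\mathbb{C}) \to \Hom_{\Sp_3(\D)}(\mathrm{ind}\cdots,\mathbb{C})$ is surjective and the non-zero right-hand side lifts. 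The only subtle point is the second-quadrant bookkeeping around the connecting homomorphism, but once the $\Ext^1$ hypothesis is imposed, the argument is purely a diagram chase in a long exact sequence of derived functors of $\Hom_{\Sp_3(\D)}(-,\mathbb{C})$. No further input beyond \S\ref{21}--\S\ref{23} is needed.
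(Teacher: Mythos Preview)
Your proposal is correct and takes essentially the same approach as the paper: the paper derives Proposition~\ref{26} directly from the Mackey exact sequence of \S\ref{21} together with the orbit-by-orbit Frobenius/Jacquet analysis of \S\ref{23}, and your write-up simply makes explicit the long exact sequence in $\Hom/\Ext^1$ that underlies the passage from the open-orbit term back to $\pi$ under the $\Ext^1$-vanishing hypothesis. Nothing further is needed.
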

	
	\subsubsection{} We now analyze the functor $$\mathcal{P}_1 = \Ext_{\Sp_3(\D)}^1(\Ind_{P_{H}}^{\Sp_3(\D)} \nu^{3}[(\sigma_1 \otimes \sigma_2)|_{M_{H}}] , \mathbb{C})$$
	mentioned in the Proposition $\ref{26}$. Recall that $\sigma_1 \in \Irr(\GL_2(\D))$, $\sigma_2 \in \Irr(\GL_1(\D))$, and 
	$$ P_{H} = M_{H} U_{H} = (\Delta(\GL_1(\D) \times \GL_1(\D)) \times \Sp_1(\D))U_{H}~ (\text{see}~ \S~ \ref{14}).$$
	The following notations will be used in the context of the next theorem.
	Assume $r_{(1,1),(2)}(\sigma_1) = \sum_{i=1}^{t}(\sigma_{1i} \otimes \sigma_{1i}')$ and define 
	$$\mathcal{Q}_i = \Hom_{\GL_1(\D)}(\sigma_{1i} ,\sigma_2  \otimes \nu ) \otimes \Ext_{\Sp_2(\D)}^1(\sigma_{1i}' , \mathbb{C})$$ and $$\mathcal{R}_i = \Ext_{\GL_1(\D)}^1(\sigma_{1i} ,\sigma_2 \otimes \nu) \otimes \Hom_{\Sp_2(\D)}(\sigma_{1i}' , \mathbb{C}),$$
	for each $i = 1,2,\ldots,t$. Then we have the following theorem related to the functor $\mathcal{P}_1$. 
	\begin{thm} \label{27}
		The functor $\mathcal{P}_1$ can be expressed as a product of the direct sum of the functors $\mathcal{Q}_i$ and $\mathcal{R}_i$, i.e., 
		$$\mathcal{P}_1 = \prod_{i=1}^t (\mathcal{Q}_i \oplus \mathcal{R}_i).$$
	\end{thm}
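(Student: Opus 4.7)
The plan is to promote the $\Hom$-level reductions of $\S\ref{23}$ to the derived $\Ext^1$-level and then to invoke a K\"unneth decomposition on the direct product group $M_H$. Since parabolic induction $\Ind_{P_H}^{\Sp_3(\D)}$ and the normalized Jacquet functor are both exact, derived Frobenius reciprocity is available, and one obtains
\[
\mathcal{P}_1 \;\cong\; \Ext^1_{P_H}\!\bigl(\nu^{3}[(\sigma_1 \otimes \sigma_2)|_{M_H}],\, \nu^{5}\bigr).
\]
Because both representations inside the Ext are trivialised on the unipotent radical $U_H$, a Hochschild--Serre-type argument for the semidirect product $P_H = M_H U_H$ collapses this to an Ext over $M_H$. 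Continuing the reduction along the same chain that in $\S\ref{23}$ produced the condition in Proposition~\ref{25}(1), we arrive at
\[
\mathcal{P}_1 \;\cong\; \Ext^1_{M_H}\!\bigl(\nu^{-1} r_{(1,1),(2)}(\sigma_1) \otimes \sigma_2,\, \mathbb{C}\bigr).
\]

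Next, expanding $r_{(1,1),(2)}(\sigma_1) = \bigoplus_{i=1}^{t} \sigma_{1i} \otimes \sigma_{1i}'$ and using additivity of $\Ext^1$ in the first argument, the computation splits as a direct sum over $i$ of terms
\[
\Ext^1_{M_H}\!\bigl((\nu^{-1}\sigma_{1i} \otimes \sigma_2) \boxtimes \sigma_{1i}',\, \mathbb{C}\bigr).
\]
Since $M_H$ decomposes as a direct product and each summand is an external tensor product, I would apply the K\"unneth formula for $\Ext$ of smooth representations over a product of $l$-groups. The cohomological dimension of $\GL_1(\D)$ is at most one by Lemma~\ref{11}, and the symplectic factor admits an analogous bound, so the K\"unneth spectral sequence degenerates at $E_2$ and yields precisely $\mathcal{Q}_i \oplus \mathcal{R}_i$ for each $i$---with the $\nu^{-1}$ twist absorbed into the second argument to produce the shift $\sigma_2 \otimes \nu$ visible in the definitions. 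Summing over $i$ gives $\mathcal{P}_1 = \prod_{i=1}^{t}(\mathcal{Q}_i \oplus \mathcal{R}_i)$.

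The main obstacle is the first step: upgrading the $\Hom$-level Jacquet reduction to $\Ext^1$ with the correct twists. This demands that parabolic induction, the normalized Jacquet functor, and the Hochschild--Serre reduction for $P_H = M_H U_H$ all be compatible with passage from $\Hom$ to $\Ext^1$, which rests on invoking Bernstein's derived adjunctions in the smooth category (or, failing that, on constructing an explicit injective resolution to verify the higher degree by hand). A secondary subtlety is the careful bookkeeping of modular twists so that the final character accompanying $\sigma_2$ is exactly $\nu$, and the K\"unneth-degeneration argument on the product of $l$-groups must be made rigorous via Lemma~\ref{11} together with its analogue for the symplectic factor.
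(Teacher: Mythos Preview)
Your proposal is correct and follows essentially the same route as the paper: derived Frobenius reciprocity to pass to $P_H$, reduction to $M_H$ via the Jacquet functor, and then a K\"unneth-type splitting on the product group. The ``main obstacle'' you flag is exactly what the paper resolves by invoking Propositions~2.5 and~2.7 of \cite{prasad2013ext}, which give the derived adjunctions (Frobenius and Jacquet) directly, so no ad hoc Hochschild--Serre argument or injective resolution is needed; the paper then passes to $\prod_i(\mathcal{Q}_i\oplus\mathcal{R}_i)$ without further comment, whereas your K\"unneth/cohomological-dimension justification makes that last step more explicit.
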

	\begin{proof}
		By Frobenius reciprocity and  Proposition $2.5$ in \cite{prasad2013ext}, we get 
		$$\mathcal{P}_1 = \Ext_{\Sp_3(\D)}^1(\Ind_{P_{H}}^{\Sp_3(\D)} \nu^{3}[(\sigma_1 \otimes \sigma_2)|_{M_{H}}] , \mathbb{C}) = \Ext_{M_{H}U_{H}}^1(\nu^{3}((\sigma_1 \otimes \sigma_2)|_{M_{H}}) ~,  \nu^5).$$
		Clearly, $$\Ext_{M_{H}U_{H}}^1(\nu^{-2}((\sigma_1 \otimes \sigma_2)|_{M_{H}}) ~,  \mathbb{C}) = \Ext_{M_{H}N}^1(\nu^{-2}((\sigma_1 \otimes \sigma_2)|_{M_{H}}) , \mathbb{C}).$$
		Since the normalized Jacquet functor is left adjoint to normalized induction (by Proposition $2.7$ in \cite{prasad2013ext}), we have
		\begin{equation} \label{56}
			\Ext_{M_{H}N}^1(\nu^{-2}((\sigma_1 \otimes \sigma_2)|_{M_{H}}) , \mathbb{C}) = \Ext_{M_{H}}^1(r_N(\nu^{-2}\sigma_1 \otimes \sigma_2)~ , \delta_N^{-1/2}).
		\end{equation}
	The space on the right hand side of $(\ref{56})$ equals $\Ext_{M_H}^1(\nu^{-1}r_{(1,1),(2)}(\sigma_1) \otimes \sigma_2~ , \mathbb{C})$.
	Since $r_{(1,1),(2)}(\sigma_1) = \sum_{i=1}^{t}(\sigma_{1i} \otimes \sigma_{1i}')$, it is equivalent to $\Ext_{M_{H}}^1(\oplus_{i=1}^t( \sigma_{1i} \otimes \sigma_{1i}' \otimes \sigma_2),\nu)$. It follows that 
	$$\Ext_{\Sp_3(\D)}^1(\Ind_{P_{H}}^{\Sp_3(\D)} \nu^{3}[(\sigma_1 \otimes \sigma_2)|_{M_{H}}] , \mathbb{C}) = \prod_{i=1}^t (\mathcal{Q}_i~\oplus~\mathcal{R}_i).$$
	\end{proof}
	
	\subsection{Representations of $\GL_3(\D)$ with a symplectic period}
	\subsubsection{}\label{28} By Lemma \ref{20}, any irreducible non-supercupidal $\Sp_3(\D)$-distinguished representation occurs as a quotient of $\pi = \sigma_1 \times \sigma_2$, where $\sigma_1 \in \Irr(\GL_2(\D))$ and $\sigma_2 \in \Irr(\GL_1(\D))$. So, it is enough to check $\Sp_3(\D)$-distinction in all irreducible quotients or subquotients (up to permutation) of the representation $\pi$. By Proposition \ref{25}, it is easy to see that the necessary conditions for $\pi$ to have a symplectic period are the following: 
	\begin{enumerate}
		\item either $\Hom_{\Delta(\GL_1(\D) \times \GL_1(\D)) \times \Sp_1(\D)}(\nu^{-1}r_{(1,1),(2)}(\sigma_1) \otimes \sigma_2~,\mathbb{C}) \not= 0$ ~or 
		
		\item  $\sigma_1$ and $\sigma_2$ have a  symplectic period. 
	\end{enumerate}
	Since any irreducible representation of $\GL_2(\D)$ is either a supercuspidal, generalized Steinberg, Speh, a character,  a twist of the Steinberg, or an irreducible principal series representation, we have six cases depending on the irreducible representation $\sigma_1$ of $\GL_2(\D)$.\\
	\textbf{Case 1:} If $\sigma_1$ is a supercuspidal representation of $\GL_2(\D)$, then  
	$$\Hom_{\Delta(\GL_1(\D) \times \GL_1(\D)) \times \Sp_1(\D)}(\nu^{-1}r_{(1,1),(2)}(\sigma_1) \otimes \sigma_2~,\mathbb{C}) = 0.$$
	As the supercuspidal representations of $\GL_2(\D)$ are not $\Sp_2(\D)$-distinguished by Lemma \ref{13}, so neither orbit contributes to an $\Sp_3(\D)$-invariant linear form. Thus, by Proposition \ref{25}, $\pi$ does not have a symplectic period.\\ 
	\textbf{Case 2:} Let $\sigma_1$ be the generalized Steinberg representation $St_2(\mu)$ of $\GL_2(\D)$, where $St_2(\mu)$ is quotient  of  $\nu^{-1/2}\mu \times \nu^{1/2}\mu$; $\mu$ is an irreducible representation of $\GL_1(\D)$ with $\Dim(\mu)>1$. Suppose 
	$$\Hom_{\Delta(\GL_1(\D) \times \GL_1(\D)) \times \Sp_1(\D)}(\nu^{-1}r_{(1,1),(2)}(\sigma_1) \otimes \sigma_2),\mathbb{C}) \not= 0,$$
	and hence
	$$\Hom_{\Delta(\GL_1(\D) \times \GL_1(\D)) \times \Sp_1(\D)}(\nu^{-1/2}\mu \otimes \nu^{-3/2}\mu \otimes \sigma_2,\mathbb{C}) \not= 0.$$
	The latter space is non-zero if and only if 
	$$\Hom_{\Delta(\GL_1(\D) \times \GL_1(\D))}(\nu^{-1/2}\mu \otimes \widetilde{\sigma_2} , \mathbb{C}) \otimes \Hom_{\Sp_1(\D)}(\nu^{-3/2}\mu,\mathbb{C}) \not= 0,$$
	and it is equivalent to 
	$$\Hom_{\Delta(\GL_1(\D) \times \GL_1(\D))}(\mu  , \sigma_2 \otimes \nu^{1/2}) \otimes \Hom_{\Sp_1(\D)}(\nu^{-3/2}\mu,\mathbb{C}) \not= 0.$$
	It follows that $\mu \simeq \sigma_2 \otimes \nu^{1/2}$ and $\mu$ is $\Sp_1(\D)$-distinguished i.e., $\Dim(\mu)=1$. Which is false. Hence,
	$$\Hom_{\Delta(\GL_1(\D) \times \GL_1(\D)) \times \Sp_1(\D)}(\nu^{-1}r_{(1,1),(2)}(\sigma_1) \otimes \sigma_2),\mathbb{C}) = 0.$$
	Since by Lemma \ref{13} the generalized Steinberg representations $St_2(\mu)$ of $\GL_2(\D)$ are not $\Sp_2(\D)$-distinguished, the participation of neither orbit occurs in an $\Sp_3(\D)$-invariant linear form. As a result, $\pi$ is not $\Sp_3(\D)$-distinguished by Proposition \ref{25}.\\
	\textbf{Case 3:} Let $\sigma_1$ be the Speh representation $\Sp_2(\mu)$ of $\GL_2(\D)$, where  $\Sp_2(\mu)$ is quotient of $\nu^{1/2}\mu \times \nu^{-1/2}\mu$; $\mu$ is an irreducible representation of $\GL_1(\D)$ with $\Dim\mu > 1$. Suppose  
	$$\Hom_{\Delta(\GL_1(\D) \times \GL_1(\D)) \times \Sp_1(\D)}((\nu^{-1}r_{(1,1),(2)}(\sigma_1) \otimes \sigma_2),\mathbb{C}) \not= 0,$$
	and it is equivalent to $$\Hom_{\Delta(\GL_1(\D) \times \GL_1(\D)) \times \Sp_1(\D)}(\nu^{-3/2}\mu \otimes \nu^{-1/2}\mu \otimes \sigma_2,\mathbb{C}) \not= 0.$$
	The latter space is non-zero if and only if 
	$\mu \simeq \sigma_2 \otimes \nu^{3/2}$ and $\mu$ is $\Sp_1(\D)$-distinguished. Which is false. Hence, 
	the part of $\pi$ supported on the closed orbit does not contribute to an $\Sp_3(\D)$-invariant linear form. As the Speh representation $\Sp_2(\mu)$ of $\GL_2(\D)$ is $\Sp_2(\D)$-distinguished, we have two sub-cases depending on $\sigma_2$:
	
	(1) If $\Dim(\sigma_2)>1$, then the open orbit also does not participate. Therefore, $\pi$ is not $\Sp_3(\D)$-distinguished in this case.
	
	(2) If $\Dim(\sigma_2)=1$, then the part of $\pi$ supported on the open orbit, a submodule of $\pi$, participates in an $\Sp_3(\D)$-invariant linear form. By Theorem \ref{27},  $$\Ext_{\Sp_3(\D)}^1(\Ind_{P_{H}}^{\Sp_3(\D)} \nu^{3}[(\sigma_1 \otimes \sigma_2)|_{M_{H}}] , \mathbb{C}) = 0.$$
	Hence,  Proposition \ref{26} implies that the irreducible representation $\pi = \Sp_2(\mu) \times \sigma_2$ is  $\Sp_3(\D)$-distinguished.\\
	\textbf{Case 4:} Let $\sigma_1$ be a character $\chi_2$ of $\GL_2(\D)$, where $\chi_2$ is a quotient of $\chi_1\nu \times \chi_1\nu^{-1}$; $\chi_1$ is a character of $\GL_1(\D)$. Suppose
	$$\Hom_{\Delta(\GL_1(\D) \times \GL_1(\D)) \times \Sp_1(\D)}((\nu^{-1}r_{(1,1),(2)}(\sigma_1) \otimes \sigma_2),\mathbb{C}) \not= 0,$$
	and it is non-zero if and only if 
	$$\Hom_{\Delta(\GL_1(\D) \times \GL_1(\D)) \times \Sp_1(\D)}(\chi_1\nu^{-2} \otimes \chi_1 \otimes \sigma_2,\mathbb{C}) \not= 0.$$
	It suggests that $\chi_1 \simeq \sigma_2  \otimes \nu^{2}$. 
	 As a result, two sub-cases arise: 
	
	(1) If $\Dim(\sigma_2)>1$, then both the orbits do not contribute to an $\Sp_3(\D)$-invariant linear form. Thus, $\pi$ fails to exhibit $\Sp_3(\D)$-distinction in this case.
	
	(2) If $\Dim(\sigma_2)=1$ and $\chi_1 \simeq \sigma_2 \otimes \nu^{2}$, then part of $\pi$ supported on the closed orbit  also participates in this case. So, the irreducible representation $\pi$ is $\Sp_3(\D)$-distinguished. If $\Dim(\sigma_2)=1$ and $\chi_1 \not\simeq \sigma_2 \otimes \nu^{2}$, then the closed orbit does not participate, and by Theorem  \ref{27} and Lemma \ref{11},
	$$\Ext_{\Sp_3(\D)}^1(\Ind_{P_{H}}^{\Sp_3(\D)} \nu^{3}[(\sigma_1 \otimes \sigma_2)]|_{M_{H}} , \mathbb{C}) = 0.$$ 
	Hence, by Proposition \ref{26} and Lemma \ref{13},  $\pi = \sigma_1 \times \sigma_2$ has a symplectic period arising from the open orbit.

	In latter case i.e., $\Dim(\sigma_2)=1$ and $\chi_1 \not\simeq \sigma_2 \otimes \nu^{2}$, Lemma \ref{5} implies that the representation $\pi = \sigma_1 \times \sigma_2$ is reducible in two cases. So, we check all irreducible subquotients of $\pi$, having a symplectic period.
	If $\pi$ is irreducible, then it is  $\Sp_3(\D)$-distinguished. Otherwise, assume $\sigma_2 \simeq \chi_1 \nu_1^{3/2}$ without loss of generality. Then by Theorem \ref{5}, the representation $\pi = \chi_2 \times \chi_1\nu^3 $ has a unique irreducible subrepresentation  $\theta_1 = Z([\chi_1\nu_1^{-1/2}, \chi_1\nu_1^{3/2} ])$ and a unique irreducible quotient $\theta_2 = Z([\chi_1\nu_1^{-1/2}, \chi_1\nu_1^{1/2}] ,[\chi_1\nu_1^{3/2}])$. The representation $\theta_1$ is a unitary character of $\GL_3(\D)$ and has a symplectic period. By Proposition \ref{25}, $\pi$ has a symplectic period and $\Dim~\Hom_{\Sp_3(\D)}(\pi|_{\Sp_3(\D)} , \mathbb{C}) = 1$. It implies that the  representation $\theta_2$ has no  symplectic period.\\
	\textbf{Case 5:} Let $\sigma_1$ be the Steinberg representation $\chi_2St_2$ of $\GL_2(\D)$, where $\chi_2St_2$ is a quotient of $\chi_1\nu^{-1} \times \chi_1\nu$; $\chi_1$ is a character of $\GL_1(\D)$. Suppose
	$$\Hom_{\Delta(\GL_1(\D) \times \GL_1(\D)) \times \Sp_1(\D)}((\nu^{-1}r_{(1,1),(2)}(\sigma_1) \otimes \sigma_2),\mathbb{C}) \not= 0,$$
	and it is equivalent to  $$\Hom_{\Delta(\GL_1(\D) \times \GL_1(\D)) \times \Sp_1(\D)}(\chi_1 \otimes \chi_1\nu^{-2} \otimes \sigma_2,\mathbb{C}) \not= 0.$$
	It follows that $\chi_1 \simeq \sigma_2$, i.e., the part of $\pi$ supported on the closed orbit contributes to an $\Sp_3(\D)$-invariant linear form. Thus, the irreducible representation  $\pi = \chi_2 St_2 \times \chi_1$ is $\Sp_3(\D)$-distinguished. If $\chi_1 \not\simeq \sigma_2$, then the closed orbit does not contribute.  By Lemma \ref{13}, the open orbit also does not contribute.  Therefore, $\pi$ is not $\Sp_3(\D)$-distinguished in this case.\\
	\textbf{Case 6:} Let $\sigma_1$ be an irreducible principal series representation $\sigma_1 = \sigma_1' \times \sigma_1'' := \Ind_{P_{1,1}}^{\GL_2(\D)}(\sigma_1' \otimes \sigma_1'')$ of $\GL_2(\D)$, where $\sigma_1' \otimes \sigma_1''$ is an irreducible representation of $\GL_1(\D) \times \GL_1(\D)$. Suppose 
	$$\Hom_{M_{H}}((\nu^{-1}r_{(1,1),(2)}(\sigma_1) \otimes \sigma_2),\mathbb{C}) \not= 0,$$
	and hence $$\Hom_{M_{H}}((\nu^{-1}(\sigma_1' \otimes \sigma_1'' \oplus \sigma_1'' \otimes \sigma_1') \otimes \sigma_2),\mathbb{C}) \not= 0.$$
	The latter space is non-zero if and only if 
	$$\Hom_{M_{H}}((\nu^{-1}\sigma_1' \otimes \nu^{-1} \sigma_1'' \otimes \sigma_2),\mathbb{C})~\oplus~ \Hom_{M_{H}}((\nu^{-1}\sigma_1'' \otimes \nu^{-1} \sigma_1' \otimes \sigma_2),\mathbb{C}) \not= 0,$$
	and it is equivalent to saying that either 
	$$\Hom_{\Delta(\GL_1(\D) \times \GL_1(\D))}(\sigma_1' , \sigma_2 \otimes \nu) \otimes \Hom_{\Sp_1(\D)}(\nu^{-1}\sigma_1'',\mathbb{C}) \not= 0$$
	\hspace{8cm} or  
	$$\Hom_{\Delta(\GL_1(\D) \times \GL_1(\D))}(\sigma_1'' , \sigma_2 \otimes \nu) \otimes \Hom_{\Sp_1(\D)}(\nu^{-1}\sigma_1',\mathbb{C}) \not= 0,\\$$
	 i.e.,
	either $\sigma_1' \simeq \sigma_2 \otimes \nu$ and $\sigma_1''$ is $\Sp_1(\D)$-distinguished representation of $\GL_1(\D)$ or $\sigma_1'' \simeq \sigma_2 \otimes \nu$ and $\sigma_1'$ is $\Sp_1(\D)$-distinguished representation of $\GL_1(\D)$.
	By Lemma \ref{13}, the irreducible principal series representation $\sigma_1 = \sigma_1' \times \sigma_1''$    is $\Sp_2(\D)$-distinguished if and only if $\Dim(\sigma_1') = \Dim(\sigma_1'') = 1$ and $\sigma_1' \not= \sigma_1'' \otimes \nu^{\pm 2}$.
	As a result, three sub-cases emerge:
	
	(1) If $\Dim(\sigma_1')>1$ and $\Dim(\sigma_1'')>1$, then both the orbits do not participate to an $\Sp_3(\D)$-invariant linear form by Lemma \ref{12} and \ref{13}. It implies that $\pi$ does not have a symplectic period.
	
	(2) If $\Dim(\sigma_1')=1$ and $\Dim(\sigma_1'')>1$, then  the open orbit does not participate. If  $\sigma_1'' \simeq \sigma_2 \otimes \nu$, then the closed orbit  participates. It follows that $\pi = \sigma_1' \times \sigma_1'' \times \sigma_2$ is $\Sp_3(\D)$-distinguished if and only if $\sigma_1'' \simeq \sigma_2 \otimes \nu$.
	
	Assume $\sigma_1' = \chi$, $\sigma_1'' = \nu^{1/2}\mu$ and $\sigma_2 = \nu^{-1/2}\mu$. 
	Then by above condition, $\pi_1 = \chi \times \nu^{-1/2}\mu \times \nu^{1/2}\mu$ is not $\Sp_3(\D)$-distinguished and $\pi_2 = \chi \times \nu^{1/2}\mu \times \nu^{-1/2}\mu$ is $\Sp_3(\D)$-distinguished. By the exact sequence of $\GL_3(\D)$-representations 
	$$0 \rightarrow \chi \times \Sp_2(\mu) \rightarrow \pi_1 \rightarrow  \chi \times St_2(\mu) \rightarrow 0,$$ 
	we conclude that the quotient $\chi \times St_2(\mu)$ is not $\Sp_3(\D)$-distinguished.
	The representation $\pi_2$ also fits in the exact sequence of $\GL_3(\D)$-representations
	$$0 \rightarrow \chi \times St_2(\mu) \rightarrow \pi_2  \rightarrow  \chi \times \Sp_2(\mu) \rightarrow 0.$$  
	Thus, the representation $\chi \times \Sp_2(\mu)$ is $\Sp_3(\D)$-distinguished by Lemma \ref{9}.
	
	(3) If $\Dim(\sigma_1')=1$ and $\Dim(\sigma_1'')=1$, then $\sigma_1 = \sigma_1' \times \sigma_1''$ is  $\Sp_2(\D)$-distinguished. Now, we have two  sub-cases depending on the representation $\sigma_2$.
	
	(3a) If $\Dim(\sigma_2)>1$, then both the orbits do not participate. Hence, $\pi$ is not $\Sp_3(\D)$-distinguished in this case.
	
	(3b)	If $\Dim(\sigma_2)=1$ and either $\sigma_1' \simeq \sigma_2 \otimes \nu$ or $\sigma_1'' \simeq \sigma_2 \otimes \nu$,  then the closed orbit participates to an $\Sp_3(\D)$-invariant linear form. Hence, $\pi$ is $\Sp_3(\D)$-distinguished.
	If $\Dim(\sigma_2)=1$ and neither $\sigma_1' \simeq \sigma_2 \otimes \nu$ nor $\sigma_1'' \simeq \sigma_2 \otimes \nu$,  then the closed orbit does not participate. By Theorem  \ref{27} and Lemma \ref{11},
	$$\Ext_{\Sp_3(\D)}^1(\Ind_{P_{H}}^{\Sp_3(\D)} \nu^{3}[(\sigma_1 \otimes \sigma_2)|_{M_{H}} , \mathbb{C}) = 0.$$
	Hence, by Proposition \ref{26}, $\pi = \sigma_1' \times \sigma_1'' \times \sigma_2$ has a symplectic period arising from the open orbit.

	We now analyze the subquotients of the representation $\pi = \sigma_1' \times \sigma_1'' \times \sigma_2$ (Recall that $\Dim(\sigma_1') = \Dim(\sigma_1'') = \Dim(\sigma_2) = 1$).
	If none of the pairs are linked, then $\pi$ is irreducible and $\Sp_3(\D)$-distinguished.
	If there is precisely one linked pair among three, then no new $\Sp_3(\D)$-distinguished subquotient is obtained.
	If there are precisely two linked pairs among three, then $\pi$ is one of the following representations:
	\begin{itemize}
		\item[\upshape(i)] $\chi\nu_1^{-1} \times \chi\nu_1^{-1} \times \chi,$\vspace{2mm}
		\item[\upshape(ii)] $\chi \times \chi \times \chi\nu_1^{-1},$\vspace{2mm}
		\item[\upshape(iii)] $\chi\nu_1^{-1} \times \chi\nu_1 \times \chi.$
	\end{itemize}
	
	If $\pi$ is either $\chi\nu_1^{-1} \times \chi\nu_1^{-1}  \times \chi$ or $\chi \times \chi \times \chi\nu_1^{-1}$, then no new $\Sp_3(\D)$-distinguished subquotient is obtained.
	If $\pi = \chi\nu_1^{-1} \times \chi\nu_1 \times \chi$, then by Theorem $\ref{8}$, $\pi$ has four irreducible subquotients 
	\begin{itemize}
		\item  $\theta_1 = L([\nu_1^{-1} \chi , \nu_1 \chi]),$\vspace{2mm}
		\item $\theta_2 = L([\nu_1 \chi],[\nu_1^{-1} \chi , \chi]),$\vspace{2mm}
		\item $\theta_3 = L([\nu_1^{-1}\chi],[\chi , \nu_1 \chi]),$\vspace{2mm}
		\item  $\theta_4 = L([\nu_1^{-1}\chi],[\nu_1 \chi], [\chi]).$
	\end{itemize}
	
	Of these, $\theta_1$ is a unique irreducible quotient of  $L([\nu_1^{-1} \chi , \chi]) \times \nu_1 \chi$. By the above Case $5$, the representation $L([\nu_1^{-1} \chi , \chi]) \times \nu_1 \chi$ is not $\Sp_3(\D)$-distinguished. It implies that the representation $\theta_1$ is not $\Sp_3(\D)$-distinguished.
	
	The representation $\theta_2$ is isomorphic to $\tau = Z([\nu_1^{-1}\chi],[\chi , \nu_1 \chi])$.
	The representation $Z([\chi , \nu_1 \chi]) \times \nu_1^{-1}\chi$ is glued from two irreducible  representations  $\tau$ and $Z([\nu_1^{-1} \chi , \nu_1 \chi])$.
	The latter representation  $Z([\nu_1^{-1} \chi , \nu_1 \chi]) \simeq \theta_4$ is a unitary character of $\GL_3(\D)$ and has a symplectic period. By Proposition \ref{25}, $Z([\chi , \nu_1 \chi]) \times \nu_1^{-1}\chi$ has a symplectic period and $\Dim \Hom_{\Sp_3(\D)}(Z([\chi , \nu_1 \chi]) \times \nu_1^{-1}\chi|_{\Sp_3(\D)} , \mathbb{C}) = 1$. It follows that the irreducible representation  $\tau \simeq \theta_2$ has no symplectic period. By taking contragredient and using a similar argument, we can conclude that $\theta_3$ also does not have a symplectic period.
	\section{Distinguished representations of $\GL_4(\D)$}
	This section analyzes the irreducible admissible representations of $\GL_4(\D)$ with a symplectic period and proves Theorem \ref{3}.
	
	\subsection{Necessary and sufficient conditions on distinction for a  representation of $\GL_4(\D)$}  
	\subsubsection{} By Conjecture $7.1$ and Proposition $7.3$ in \cite{Verma}, the supercuspidal representations of $\GL_4(\D)$ are not $\Sp_4(\D)$-distinguished. Therefore, it is enough to study the problem of the symplectic period for the non-supercuspidal representations of $\GL_4(\D)$.
	We begin with the following lemma. 
	\begin{lem}\label{29}
		Let $\theta$ be a non-supercuspidal irreducible admissible representation of $\GL_4(\D)$ with a symplectic period. Then either $\theta$ appears as a quotient of $\pi_1 \times \pi_2$, where $\pi_1 , \pi_2 \in \Irr(\GL_2(\D))$ or $\theta = \lambda_1 \times \lambda_2$, where $\lambda_1 \in \Irr(\GL_1(\D))$ and $\lambda_2$ is irreducible supercuspidal representation of $\GL_3(\D)$. 
	\end{lem}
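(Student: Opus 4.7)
My plan is to model the proof on that of Lemma~\ref{20}, but because $\GL_4(\D)$ has three maximal parabolics I would prefer to argue uniformly through the supercuspidal support of $\theta$ rather than treat each parabolic in turn and re-glue via contragredient. First I would note that, since $\theta$ is non-supercuspidal, its supercuspidal support is a multiset $\{(\GL_{a_i},\tau_i)\}_{i=1}^k$ of inertial classes with $k\geq 2$ and $\sum_i a_i = 4$, so the multiset of sizes $\{a_i\}$ is a partition of $4$ into at least two parts, one of $\{3,1\}$, $\{2,2\}$, $\{2,1,1\}$, or $\{1,1,1,1\}$.

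If the multiset of sizes equals $\{3,1\}$, the support takes the form $\{(\GL_1,\lambda_1),(\GL_3,\lambda_2)\}$ with $\lambda_2\in\Irr(\GL_3(\D))$ necessarily supercuspidal and $\lambda_1\in\Irr(\GL_1(\D))$. I would then observe that the segments associated to $\lambda_1$ and $\lambda_2$ live on $\GL_r(\D)$ with different $r$ and therefore cannot be linked in the sense of Definition~\ref{4}; Theorem~\ref{5} forces $\lambda_1\times\lambda_2$ to be irreducible, and since $\theta$ lies in the Bernstein block determined by this support we must have $\theta=\lambda_1\times\lambda_2$, giving the second conclusion of the lemma.

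In every remaining case the multiset of sizes regroups into two blocks of size $2$ ($\{2,2\}$ trivially, $\{2,1+1\}$ for $\{2,1,1\}$, and $\{1+1,1+1\}$ for $\{1,1,1,1\}$); by transitivity of parabolic induction, $\theta$ will then appear as a composition factor of some $\Ind_{P_{2,2}}^{\GL_4(\D)}(\sigma_1\otimes\sigma_2)$ with $\sigma_1,\sigma_2$ finite-length representations of $\GL_2(\D)$. I would deduce that $r_{P_{2,2}}(\theta)\neq 0$, select an irreducible subrepresentation $\pi_1\otimes\pi_2\hookrightarrow r_{P_{2,2}}(\theta)$ with $\pi_1,\pi_2\in\Irr(\GL_2(\D))$, and appeal to Frobenius reciprocity to produce a non-zero (and therefore surjective, by irreducibility of $\theta$) morphism $\pi_1\times\pi_2\twoheadrightarrow\theta$, giving the first conclusion.

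The hard part that I expect to need justifying carefully is the implication ``the supercuspidal support of $\theta$ factors through the Levi of $P_{2,2}$'' $\Rightarrow$ ``$r_{P_{2,2}}(\theta)\neq 0$''; this is standard and follows from the Bernstein decomposition together with exactness of the Jacquet functor, but it is the pivot of the whole argument. Once that is in place, the passage from ``composition factor of an induced representation'' to ``quotient of an induced representation by an irreducible pair on $\GL_2(\D)\times\GL_2(\D)$'' is a routine application of Frobenius reciprocity. I note that the symplectic-period hypothesis is not needed for this argument, so the structural conclusion actually holds for every non-supercuspidal irreducible admissible representation of $\GL_4(\D)$.
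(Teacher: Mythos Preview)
Your argument is correct and takes a cleaner, more uniform route than the paper's. The paper proceeds by explicit case-work: it starts from $\theta$ as a quotient of an induction from a maximal parabolic and, in the $(1,3)$ and $(3,1)$ cases with $\lambda_2$ non-supercuspidal, repeatedly rearranges the $\GL_1$ and $\GL_2$ factors using the commutativity $\tau_1\times\tau_2\simeq\tau_2\times\tau_1$ when the product is irreducible, together with Lemma~\ref{10} on contragredients to flip the order of the blocks. In particular the symplectic-period hypothesis is actually invoked in the paper's proof (to pass from $\theta$ to $\tilde\theta$ and back). Your supercuspidal-support argument bypasses all of this bookkeeping: once the partition of $4$ is not $\{3,1\}$ you get $r_{P_{2,2}}(\theta)\neq 0$ directly from the Bernstein decomposition, and an adjunction finishes the job. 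As you observe, this yields the structural dichotomy for \emph{every} non-supercuspidal irreducible $\theta$, which is a slight strengthening.

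One small point to make precise: the adjunction producing $\pi_1\times\pi_2\twoheadrightarrow\theta$ from an irreducible \emph{sub}module $\pi_1\otimes\pi_2\hookrightarrow r_{P}(\theta)$ is Bernstein's second adjointness $\Hom_G(i_P\sigma,\theta)\cong\Hom_M(\sigma,r_{\bar P}\theta)$, which involves the Jacquet module for the opposite parabolic $\bar P_{2,2}$; ordinary Frobenius reciprocity with $r_{P_{2,2}}$ would only give an embedding $\theta\hookrightarrow\pi_1\times\pi_2$. This is harmless here since $P_{2,2}$ and $\bar P_{2,2}$ are $\GL_4(\D)$-conjugate with the same Levi (so $r_{\bar P_{2,2}}(\theta)\neq 0$ as well), but you should say so explicitly.
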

	\begin{proof}
		If $\theta$ is a non-supercuspidal irreducible admissible representation of $\GL_4(\D)$, then $\theta$ appears as a quotient of either $\lambda_1 \times \lambda_2$, $\lambda_2 \times \lambda_1$, or $\pi_1 \times \pi_2$, where $\lambda_1 \in \Irr(\GL_1(\D))$, $\lambda_2 \in \Irr(\GL_3(\D))$, and $\pi_1 , \pi_2 \in \Irr(\GL_2(\D))$. The last case is trivial.
		
		If $\theta$ is a quotient of $\lambda_2 \times \lambda_1$, then $\tilde{\theta}$ is a quotient of $\tilde{\lambda_1} \times \tilde{\lambda_2}$. By Lemma \ref{10}, again we are reduced to the first case. So assume that $\theta$ is a quotient of $\lambda_1 \times \lambda_2$. Now if $\lambda_1 \in \Irr(\GL_1(\D))$  and $\lambda_2$ is an irreducible supercuspidal representation of $\GL_3(\D)$, then again we have nothing left to prove. Thus, assume $\lambda_2$ is not supercuspidal. Hence, $\lambda_2$ is a quotient of one of the representations of the form $\tau_1 \times \tau_2$, $\tau_2 \times \tau_1$, or $\sigma_1 \times \sigma_2 \times \sigma_3$, where  $\tau_1$ is an irreducible representation of  $\GL_2(\D)$, $\tau_2 \in \Irr(\GL_1(\D))$ and $\sigma_1 , \sigma_2 , \sigma_3 \in \Irr(\GL_1(\D))$.
		
		In the first case, $\lambda_1 \times \lambda_2$ is a quotient of $  \lambda_1 \times \tau_1 \times \tau_2$. Since $\tau_1 \times \tau_2$ is irreducible, $\lambda_1 \times \tau_1 \times \tau_2 \simeq \lambda_1 \times \tau_2 \times \tau_1$. If $\lambda_1 \times \tau_2$ is irreducible, then the lemma is proved. If not, $\lambda_1 \times \tau_2 \times \tau_1$ is glued from $\langle \lambda_1 \times \tau_2 \rangle \times \tau_1$ and  $\langle \lambda_1 \times \tau_2 \rangle^t \times \tau_1$, where $\langle \lambda_1 \times \tau_2 \rangle $ and $\langle \lambda_1 \times \tau_2 \rangle^t$ are the unique irreducible subrepresentation and unique irreducible quotient of $\lambda_1 \times \tau_2$. Thus, any irreducible quotient of $\lambda_1 \times \lambda_2$ has to be a quotient of one of the two.
		
		In the second case, since $\tau_1 \times \tau_2$ is irreducible, $ \lambda_1 \times \tau_2 \times \tau_1 \simeq \lambda_1 \times \tau_1 \times \tau_2$. Thus, we are back to the first case.
		
		The case, where the representations $\lambda_1 \times \sigma_1$ and $\sigma_2 \times \sigma_3$ are irreducible, is trivial. In the case where at least one of them is reducible, we get the lemma by breaking $\lambda_1 \times \sigma_1 \times \sigma_2 \times \sigma_3$, as in the first case, into subquotients of the required form.       
	\end{proof}
	
	\noindent
	By Lemma \ref{29}, we first apply Mackey theory to the representations of $\GL_4(\D)$ parabolically induced from the maximal parabolic subgroup $P_{1,3}$ and after that, on the representations of $\GL_4(\D)$ parabolically induced from the maximal parabolic subgroup $P_{2,2}$. 
	\subsubsection{}\label{30} Consider the representation $\pi = \lambda_1 \times \lambda_2 := \Ind_{P_{1,3}}^{\GL_4(\D)}\lambda$ of $\GL_4(\D)$, where $\lambda = \lambda_1 \otimes \lambda_2$ is an irreducible representation of $\GL_1(\D) \times \GL_3(\D)$. Analyzing the restriction of $\pi$ to $\Sp_4(\D)$, we obtain the following exact sequence of $\Sp_4(\D)$-representations by applying Mackey theory and  Proposition \ref{15}: 
    $$0 \rightarrow \ind_{H_{1,0}}^{\Sp_4(\D)}[(\lambda_1 \otimes \lambda_2)|_{M_{1,0}}] \rightarrow \pi \rightarrow  \Ind_{H_{1,1}}^{\Sp_4(\D)}\delta_{H_{1,1}}^{1/2}[(\lambda_1 \otimes \lambda_2)|_{M_{1,1}}] \rightarrow 0,$$
	where $\delta_{H_{1,1}}$ is the character on $H_{1,1}$ given by 
	\begin{equation*}
		\delta_{H_{1,1}} \Bigg{[}\begin{pmatrix}
			a & b & c & d \\
			0 & e & f & g  \\ 
			0 & h & i & j  \\
			0 & 0 & 0 & \overline{a}^{-1} 
		\end{pmatrix}\Bigg{]} = |\N_{\D/\K}(a)|_{\K}^{8}.
	\end{equation*}
	\noindent
	It is easy to observe by Proposition $1.9$(b) in \cite{BZ} that 
	$$\Hom_{\Sp_4(\D)}(ind_{H_{1,s}}^{\Sp_4(\D)}[\delta_P^{1/2}(\lambda_1 \otimes \lambda_2)|_{M_{1,s}}] , \mathbb{C}) = \Hom_{M_{1,s}}(r_{(s,~1-s)}(\lambda_1) \otimes r_{(3-s,~s)}(\lambda_2), \nu),$$
	for $s = 0,1$. Consequently, the following propositions outline the necessary and sufficient conditions for distinction in a parabolically induced representation of $\GL_4(\D)$.

	\begin{prop} \label{32}
		If the representation 
		$$\pi = \lambda_1 \times \lambda_2  := \Ind_{P_{1,3}}^{\GL_4(\D)}(\lambda_1 \otimes \lambda_2)$$
		has an $\Sp_4(\D)$-invariant form, then either
		\begin{enumerate}
			\item[\upshape(1)]  $\Hom_{\Delta(\GL_1(\D) \times \GL_1(\D)) \times \Sp_2(\D)}(\nu^{-1}[\lambda_1 \otimes r_{(2,1),(3)}(\lambda_2)]~,\mathbb{C}) \not= 0$~(\text{closed orbit condition}) or 
			
			\item[\upshape(2)]   $\lambda_1$ and $\lambda_2$ have a  symplectic period~(\text{open orbit condition}).
		\end{enumerate}
	\end{prop}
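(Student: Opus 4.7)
The plan is to mimic the open/closed orbit analysis carried out for $\GL_3(\D)$ in §\ref{23}, now applied to the Mackey filtration of $\pi|_{\Sp_4(\D)}$ exhibited in §\ref{30}. Applying the left-exact functor $\Hom_{\Sp_4(\D)}(-,\mathbb{C})$ to the short exact sequence
$$0 \longrightarrow \ind_{H_{1,0}}^{\Sp_4(\D)}\bigl[(\lambda_1 \otimes \lambda_2)|_{M_{1,0}}\bigr] \longrightarrow \pi \longrightarrow \Ind_{H_{1,1}}^{\Sp_4(\D)}\delta_{H_{1,1}}^{1/2}\bigl[(\lambda_1 \otimes \lambda_2)|_{M_{1,1}}\bigr] \longrightarrow 0,$$
the hypothesis $\Hom_{\Sp_4(\D)}(\pi,\mathbb{C}) \neq 0$ forces one of the two outer terms to admit a non-zero $\Sp_4(\D)$-invariant linear form, so the argument splits into the open orbit case and the closed orbit case.

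For the open orbit, Proposition \ref{15} taken with $(k,r)=(1,0)$ gives $H_{1,0} = \Sp_1(\D) \times \Sp_3(\D)$ (the unipotent part is trivial here, since the vector $e_1+e_4$ is anisotropic). Frobenius reciprocity for compact induction (Proposition $1.9$(b) of \cite{BZ}) then yields
$$\Hom_{\Sp_4(\D)}\bigl(\ind_{H_{1,0}}^{\Sp_4(\D)}[(\lambda_1 \otimes \lambda_2)|_{M_{1,0}}],\mathbb{C}\bigr) \cong \Hom_{\Sp_1(\D)}(\lambda_1,\mathbb{C}) \otimes \Hom_{\Sp_3(\D)}(\lambda_2,\mathbb{C}).$$
Non-vanishing of this space is exactly condition $(2)$, namely that both $\lambda_1$ and $\lambda_2$ carry a symplectic period.

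For the closed orbit, the strategy mirrors the derivation of $(\ref{57})$–$(\ref{58})$. By Proposition \ref{15}, $H_{1,1} = (\Delta(\GL_1(\D) \times \GL_1(\D)) \times \Sp_2(\D))\,U_{1,1}$. Frobenius reciprocity reduces the non-vanishing of $\Hom_{\Sp_4(\D)}\bigl(\Ind_{H_{1,1}}^{\Sp_4(\D)}\delta_{H_{1,1}}^{1/2}[(\lambda_1 \otimes \lambda_2)|_{M_{1,1}}],\mathbb{C}\bigr)$ to a Hom condition over $H_{1,1}$. Using that $U_{1,1}$ acts trivially on $\mathbb{C}$ and that the normalized Jacquet functor $r_{(2,1),(3)}$ on $\GL_3(\D)$ is left adjoint to normalized parabolic induction, one rewrites the condition as a Hom on $M_{1,1} = \Delta(\GL_1(\D) \times \GL_1(\D)) \times \Sp_2(\D)$ involving $\lambda_1 \otimes r_{(2,1),(3)}(\lambda_2)$. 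The combined $\nu$-twist coming from $\delta_{H_{1,1}}^{1/2}$, from the normalization of $r_{(2,1),(3)}$, and from the modular character of the parabolic $P_{2,1} \subset \GL_3(\D)$, collapses to the single factor $\nu^{-1}$ announced in condition $(1)$.

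The main obstacle is the bookkeeping of the $\nu$-twists in the closed orbit step: the $\GL_3(\D)$-case of §\ref{23} furnishes the template, but the exponents must be recomputed because the $\GL_1(\D)$-block of $H_{1,1}$ now sits inside a $4\times 4$ matrix (so $\delta_{H_{1,1}}$ is the character $|\N_{\D/\K}(a)|_{\K}^{8}$ recorded in §\ref{30}) rather than the $3\times 3$ matrix of §\ref{14}, and the relevant Jacquet functor is on $\GL_3(\D)$ instead of $\GL_2(\D)$. Once the arithmetic of these characters is verified, the two alternatives $(1)$ and $(2)$ are precisely the assertions obtained from the closed and open orbits respectively.
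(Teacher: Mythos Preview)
Your proposal is correct and follows essentially the same approach as the paper. The paper does not give a standalone proof of this proposition; it sets up the Mackey exact sequence in \S\ref{30}, records the formula $\Hom_{\Sp_4(\D)}(\ind_{H_{1,s}}^{\Sp_4(\D)}[\delta_P^{1/2}(\lambda_1\otimes\lambda_2)|_{M_{1,s}}],\mathbb{C}) = \Hom_{M_{1,s}}(r_{(s,1-s)}(\lambda_1)\otimes r_{(3-s,s)}(\lambda_2),\nu)$ for $s=0,1$ via Proposition~1.9(b) of \cite{BZ}, and then simply states the proposition---your write-up is precisely the expanded version of this, transplanting the template of \S\ref{23} to the $P_{1,3}$ setting as the paper implicitly intends.
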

	\begin{prop}\label{hariom}
		The representation $$\pi = \lambda_1 \times \lambda_2  := \Ind_{P_{1,3}}^{\GL_4(\D)}(\lambda_1 \otimes \lambda_2)$$
		has an $\Sp_4(\D)$-invariant form,
		if one of the following holds
		\begin{enumerate}
			\item[\upshape(1)] $\Hom_{\Delta(\GL_1(\D) \times \GL_1(\D)) \times \Sp_2(\D)}(\nu^{-1}[\lambda_1 \otimes r_{(2,1),(3)}(\lambda_2)]~,\mathbb{C}) \not= 0$,
			\item[\upshape(2)]   $\lambda_1$ and $\lambda_2$ have a symplectic period with 
			$$\Ext_{\Sp_4(\D)}^1(\Ind_{H_{1,1}}^{\Sp_4(\D)}
			\nu^4[(\lambda_1 \otimes \lambda_2)|_{M_{1,1}}] , \mathbb{C}) = 0.$$
		\end{enumerate} 
	\end{prop}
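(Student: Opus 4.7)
The plan is to apply the left-exact functor $\Hom_{\Sp_4(\D)}(-,\mathbb{C})$ to the Mackey short exact sequence from $\S$\ref{30}, in direct analogy with the argument behind Proposition \ref{26}. Writing $S = \ind_{H_{1,0}}^{\Sp_4(\D)}[(\lambda_1 \otimes \lambda_2)|_{M_{1,0}}]$ and $Q = \Ind_{H_{1,1}}^{\Sp_4(\D)} \nu^4 [(\lambda_1 \otimes \lambda_2)|_{M_{1,1}}]$ for the open-orbit submodule and closed-orbit quotient of $\pi|_{\Sp_4(\D)}$, this yields a six-term exact sequence whose relevant initial portion is
$$0 \to \Hom_{\Sp_4(\D)}(Q,\mathbb{C}) \to \Hom_{\Sp_4(\D)}(\pi,\mathbb{C}) \to \Hom_{\Sp_4(\D)}(S,\mathbb{C}) \to \Ext^1_{\Sp_4(\D)}(Q,\mathbb{C}).$$
My goal is to show that the middle term is non-zero under either of the two hypotheses.

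In case $(1)$, I would identify $\Hom_{\Sp_4(\D)}(Q,\mathbb{C})$ with the closed-orbit Hom space appearing in the hypothesis. Frobenius reciprocity combined with the left-adjointness of the normalized Jacquet functor to normalized parabolic induction (Proposition $1.9$(b) of \cite{BZ}) converts this $\Hom$, step by step, into a $\Hom$ over the Levi $M_{1,1} = \Delta(\GL_1(\D) \times \GL_1(\D)) \times \Sp_2(\D)$, which after the standard modular-character bookkeeping becomes exactly $\Hom_{\Delta(\GL_1(\D) \times \GL_1(\D)) \times \Sp_2(\D)}(\nu^{-1}[\lambda_1 \otimes r_{(2,1),(3)}(\lambda_2)],\mathbb{C})$. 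By assumption this is non-zero, and since it injects into $\Hom_{\Sp_4(\D)}(\pi,\mathbb{C})$, we are done.

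In case $(2)$, Frobenius reciprocity for compact induction gives
$$\Hom_{\Sp_4(\D)}(S,\mathbb{C}) = \Hom_{\Sp_1(\D)}(\lambda_1,\mathbb{C}) \otimes \Hom_{\Sp_3(\D)}(\lambda_2,\mathbb{C}),$$
which is non-zero by the symplectic-period hypotheses on $\lambda_1$ and $\lambda_2$. The assumed vanishing of $\Ext^1_{\Sp_4(\D)}(Q,\mathbb{C})$ then forces the connecting homomorphism in the sequence above to be zero, so the restriction map $\Hom_{\Sp_4(\D)}(\pi,\mathbb{C}) \to \Hom_{\Sp_4(\D)}(S,\mathbb{C})$ surjects onto this non-zero space.

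The delicate point I anticipate is the bookkeeping of modular characters in case $(1)$. The twist $\delta_{H_{1,1}}^{1/2} = \nu^4$ built into the Mackey quotient must combine with the normalization conventions of the Jacquet functor and the modular characters of the Levi factors to produce precisely the twist $\nu^{-1}$ recorded in the statement. I would model this computation on the parallel $\GL_3(\D)$ calculation carried out in $\S$\ref{23}, where the analogous $\nu^{-1}$ twist emerges once all of the modular characters cancel.
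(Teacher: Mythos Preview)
Your proposal is correct and matches the paper's approach. The paper does not spell out a proof of this proposition; it is presented as an immediate consequence of the Mackey filtration in \S\ref{30} together with the identification of the orbit $\Hom$-spaces displayed there, exactly in parallel with the explicit $\GL_3(\D)$ argument of \S\ref{23} that you cite as your model, and your six-term sequence with the $\Ext^1$ obstruction is precisely the mechanism that makes condition~(2) sufficient.
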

	\subsubsection{} We now analyze the functor 
	$$\mathcal{P}_2 = \Ext_{\Sp_4(\D)}^1(\Ind_{H_{1,1}}^{\Sp_4(\D)}\nu^4[(\lambda_1 \otimes \lambda_2)|_{M_{1,1}}] , \mathbb{C})$$ 
	mentioned in the Proposition $\ref{hariom}$.
	Recall that $\lambda_1 \in \Irr(\GL_1(\D))$, $\lambda_2 \in \Irr(\GL_{3}(\D))$, and $$H_{1,1} = M_{1,1} U_{1,1} = (\Delta(\GL_1(\D) \times \GL_1(\D)) \times \Sp_2(\D)) U_{1,1} ~ (\text{see Proposition}~ \ref{15}).$$
	Assume $r_{(2,1),(3)}(\lambda_2) = \sum_{i=1}^{t}(\lambda_{2i} \otimes \lambda_{2i}')$.
	For $i = 1,2,\ldots,t$, define 
	$$\mathcal{S}_i = \Hom_{\GL_1(\D)}(\lambda_1 ,\lambda_{2i}' \otimes \nu ) \otimes \Ext_{\Sp_2(\D)}^1(\lambda_{2i} , \mathbb{C})$$ and $$\mathcal{T}_i = \Ext_{\GL_1(\D)}^1(\lambda_1 ,\lambda_{2i}' \otimes \nu) \otimes \Hom_{\Sp_2(\D)}(\lambda_{2i} , \mathbb{C}).$$
	Then following theorem follows by the same technique as Theorem $\ref{27}$. Therefore, the proof is omitted.
	\begin{thm}\label{34}
		The functor $\mathcal{P}_2$ can be expressed as a product of the direct sum of the functors $\mathcal{S}_i$ and $\mathcal{T}_i$, i.e., 
		$$\mathcal{P}_2 = \prod_{i=1}^t (\mathcal{S}_i \oplus \mathcal{T}_i).$$
	\end{thm}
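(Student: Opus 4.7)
The plan is to imitate the proof of Theorem \ref{27} line by line, performing the analogous chain of adjunctions with the arithmetic changes required for $\Sp_4(\D)$ and the Levi $M_{1,1} \simeq \Delta(\GL_1(\D) \times \GL_1(\D)) \times \Sp_2(\D)$.

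First, I would apply Frobenius reciprocity for Ext (Proposition $2.5$ of \cite{prasad2013ext}) to rewrite
$$\mathcal{P}_2 = \Ext^1_{\Sp_4(\D)}\bigl(\Ind_{H_{1,1}}^{\Sp_4(\D)} \nu^4\,[(\lambda_1 \otimes \lambda_2)|_{M_{1,1}}],\, \mathbb{C}\bigr)$$
as an Ext on the parabolic $H_{1,1} = M_{1,1} U_{1,1}$, with the trivial target twisted by the appropriate power of $\delta_{H_{1,1}}^{1/2}$, just as $\nu^3$ was traded for $\nu^5$ in Theorem \ref{27}. Since $U_{1,1}$ acts trivially on both arguments, this Ext over $M_{1,1} U_{1,1}$ coincides with the Ext over $M_{1,1} N$, where $N = N_1 \times N_2$ with $N_1$ trivial in $\GL_1(\D)$ and $N_2 \subset \GL_3(\D)$ the unipotent radical of the $(2,1)$-parabolic.

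Next, I would invoke the left adjunction between the normalized Jacquet functor and normalized induction (Proposition $2.7$ of \cite{prasad2013ext}) to pass from an Ext over $M_{1,1} N$ to one over $M_{1,1}$:
$$\Ext^1_{M_{1,1} N}\bigl(\nu^a[(\lambda_1 \otimes \lambda_2)|_{M_{1,1}}],\, \mathbb{C}\bigr) \;=\; \Ext^1_{M_{1,1}}\bigl(r_N(\nu^a \lambda_1 \otimes \lambda_2),\, \delta_N^{-1/2}\bigr).$$
After absorbing the Jacquet normalization $\delta_N^{1/2}$ and the modular-character twists into a single power of $\nu$, the right-hand side becomes
$$\Ext^1_{M_{1,1}}\bigl(\nu^{-1}\,[\lambda_1 \otimes r_{(2,1),(3)}(\lambda_2)],\, \mathbb{C}\bigr),$$
consistent with the $\nu^{-1}$-shift already appearing in the closed-orbit condition of Proposition \ref{32}.

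Finally, I would substitute the decomposition $r_{(2,1),(3)}(\lambda_2) = \sum_{i=1}^t (\lambda_{2i} \otimes \lambda_{2i}')$. The direct sum in the first argument turns into a product $\prod_{i=1}^t \Ext^1_{M_{1,1}}(\lambda_1 \otimes \lambda_{2i} \otimes \lambda_{2i}',\, \nu)$. Since $M_{1,1} \simeq \GL_1(\D) \times \Sp_2(\D)$ via the diagonal identification, a K\"unneth-type decomposition of $\Ext^1$ for an external tensor product splits each factor into the two pieces $\Hom \otimes \Ext^1$ and $\Ext^1 \otimes \Hom$, which are precisely $\mathcal{S}_i$ and $\mathcal{T}_i$. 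The main obstacle is the arithmetic bookkeeping of the $\nu$-powers coming from $\delta_{H_{1,1}}^{1/2} = \nu^4$, the Jacquet normalization, and the built-in twist in the inducing datum; once these align as above, the K\"unneth step is purely formal and the argument mirrors Theorem \ref{27} verbatim, which is why the authors omit the details.
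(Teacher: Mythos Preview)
Your proposal is correct and follows exactly the approach the paper intends: it mirrors the proof of Theorem~\ref{27} step by step---Frobenius reciprocity for $\Ext$ (Proposition~2.5 of \cite{prasad2013ext}), replacing $U_{1,1}$ by $N = N_1 \times N_2$ with $N_1$ trivial and $N_2$ the unipotent radical of the $(2,1)$-parabolic in $\GL_3(\D)$, applying the Jacquet adjunction (Proposition~2.7 of \cite{prasad2013ext}), and then the K\"unneth splitting over $M_{1,1} \simeq \GL_1(\D) \times \Sp_2(\D)$. The paper omits the proof precisely because it is this verbatim adaptation, and your bookkeeping of the $\nu$-twists lands on the same $\Ext^1_{M_{1,1}}(\nu^{-1}[\lambda_1 \otimes r_{(2,1),(3)}(\lambda_2)],\mathbb{C})$ that the closed-orbit condition in Proposition~\ref{32} predicts.
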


	\subsubsection{}\label{35} By Lemma $\ref{29}$, we investigate the presence of a symplectic period in each quotient of the representations of the form $\pi = \lambda_1 \times \lambda_2$, where $\lambda_1 \in \Irr(\GL_1(\D))$ and $\lambda_2$ is irreducible supercuspidal representation of $\GL_3(\D)$. Proposition \ref{32} implies that the part of $\pi$ supported on the closed orbit does not participate in $\Sp_4(\D)$-invariant linear form. As a result, two sub-cases arise:\\
	\textbf{Case 1:} In the case, where one of $\lambda_1$ or $\lambda_2$ does not have a symplectic period, then the open orbit also does not participate. So, $\pi$ does not have a symplectic period by Proposition \ref{32}.\\ 
	\textbf{Case 2:} If $\pi = \lambda_1 \times \lambda_2$, where $\lambda_1 $ is a character of $\GL_1(\D)$ and $\lambda_2$ is irreducible supercuspidal representation of $\GL_3(\D)$ with a symplectic period,  then the open orbit participates. Since $\lambda_2$ is an irreducible supercuspidal representation of $\GL_3(\D)$, by Theorem \ref{34} we have $$\Ext_{\Sp_4(\D)}^1(\Ind_{H_{1,1}}^{\Sp_4(\D)} 
	\nu^4[(\lambda_1 \otimes \lambda_2)|_{M_{1,1}}] , \mathbb{C}) = 0.$$ 
	Hence, the irreducible representation $\pi =  \lambda_1 \times \lambda_2$ has a symplectic period arising from the open orbit by Proposition \ref{hariom}.
	\subsubsection{}\label{54} By Lemma \ref{29}, we now consider the representations of the form $\pi = \pi_1 \times \pi_2 := \Ind_{P_{2,2}}^{\GL_4(\D)}\sigma$ of $\GL_4(\D)$, where $\sigma = \pi_1 \otimes \pi_2$ is an irreducible representation of $\GL_2(\D) \times \GL_2(\D)$. Analyzing the restriction of $\pi$ to $\Sp_4(\D)$ using Mackey theory and Proposition \ref{15}, we obtain that $(\pi_1 \times \pi_2)|_{\Sp_4(\D)}$ is glued from the three subquotients,
	$$\ind_{H_{2,0}}^{\Sp_4(\D)} (\pi_1 \otimes \pi_2),~ \ind_{H_{2,1}}^{\Sp_4(\D)}\delta_{H_{2,1}}^{1/2} (\pi_1 \otimes \pi_2)|_{M_{2,1}}, ~\text{and}~  \ind_{H_{2,2}}^{\Sp_4(\D)}\delta_{H_{2,2}}^{1/2} (\pi_1 \otimes \pi_2)|_{M_{2,2}},$$
	where $\delta_{H_{2,2}}$ and $\delta_{H_{2,1}}$ are the characters on $H_{2,2}$ and $H_{2,1}$, respectively; given by 
	$$\delta_{H_{2,2}} \Bigg{[}\begin{pmatrix}
		a & *  \\
		
		0 & \overline{a}^{-1} 
	\end{pmatrix}\Bigg{]} = |\N_{\D/\K}(a)|^8 , a \in \GL_2(\D),$$
	$$\delta_{H_{2,1}} \Bigg{[}\begin{pmatrix}
		a & * & * & * \\
		0 & b & * & * \\ 
		0 & 0 & c & * \\
		0 & 0 & 0 & \overline{a}^{-1} 
	\end{pmatrix}\Bigg{]} = |\N_{\D/\K}(a)|^8, a \in \GL_1(\D).$$

	\noindent
	Through an analysis similar to $\S$\ref{23} of these three subquotients, the subsequent proposition is derived, thus omitting the proof.
	\begin{prop} \label{36}
		If the representation 
		$$\pi = \pi_1 \times \pi_2 := \Ind_{P_{2,2}}^{\GL_4(\D)}(\pi_1 \otimes \pi_2)$$
		has an $\Sp_4(\D)$-invariant form, then either
		\begin{enumerate}
			\item[\upshape(1)] $\pi_1 \simeq \nu \pi_2$ (\text{Closed orbit condition}) or
			
			\item[\upshape(2)] $\pi_1$ and $\pi_2$ are $\Sp_2(\D)$-distinguished or
			
			\item[\upshape(3)]  $ \Hom_{M_{2,1}}(\nu^{-1} (r_{(1,1),(2)} (\pi_1) \otimes r_{(1,1),(2)} (\pi_2)) ,  \mathbb{C} ) \not= 0.$ 
		\end{enumerate}
	\end{prop}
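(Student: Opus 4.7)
The plan is to run the same three-part analysis used in $\S$\ref{23} and in the $P_{1,3}$ case, applied to each of the three subquotients of $\pi|_{\Sp_4(\D)}$ produced by Mackey theory and Proposition \ref{15} in $\S$\ref{54}. If $\pi$ admits a nonzero $\Sp_4(\D)$-invariant linear form, then at least one of those three subquotients must also, and I would translate each possibility, via Frobenius reciprocity and second adjointness, into exactly one of the three numbered conditions.

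For the open orbit $r=0$ the stabilizer is $H_{2,0} = \Sp_2(\D) \times \Sp_2(\D)$ with no unipotent contribution, and Frobenius reciprocity for compact induction immediately gives
$$\Hom_{\Sp_4(\D)}\bigl(\ind_{H_{2,0}}^{\Sp_4(\D)}(\pi_1 \otimes \pi_2),\mathbb{C}\bigr) = \Hom_{\Sp_2(\D)}(\pi_1,\mathbb{C}) \otimes \Hom_{\Sp_2(\D)}(\pi_2,\mathbb{C}),$$
yielding condition (2). For the closed orbit $r=2$ the stabilizer is $H_{2,2} = \Delta(\GL_2(\D) \times \GL_2(\D))\cdot U_{2,2}$, with $M_{2,2}$ embedded diagonally via $g \mapsto (g, {}^t\bar g^{-1})$. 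Combining Frobenius reciprocity with the trivial action of $U_{2,2}$ on $\pi_1 \otimes \pi_2$, and matching the modular characters $\delta_{H_{2,2}}^{1/2}$ and $\delta_{P_{2,2}}^{1/2}$, the Hom condition reduces to $\Hom_{\GL_2(\D)}(\pi_1 \otimes \tilde{\pi}_2,\nu) \neq 0$, i.e.\ $\pi_1 \simeq \nu\pi_2$, which is condition (1).

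For the intermediate orbit $r=1$, where $M_{2,1} = \Delta(\GL_1(\D) \times \GL_1(\D)) \times \Sp_1(\D) \times \Sp_1(\D)$ and $U_{2,1}$ contains the upper-triangular unipotent cutting out the partition $(1,1)$ in each $\GL_2(\D)$-factor, second adjointness of the normalized Jacquet functor to normalized induction (Proposition $1.9$(b) of \cite{BZ}) replaces $\pi_i|_{M_{2,1}}$ by the Jacquet module $r_{(1,1),(2)}(\pi_i)$ in each factor. Tracking the modular character of the relevant unipotent against $\delta_{H_{2,1}}^{1/2}$, exactly as in equations (\ref{57}) and (\ref{58}) of $\S$\ref{23}, the resulting condition is precisely
$$\Hom_{M_{2,1}}\bigl(\nu^{-1}(r_{(1,1),(2)}(\pi_1) \otimes r_{(1,1),(2)}(\pi_2)),\mathbb{C}\bigr) \neq 0,$$
which is condition (3).

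The main obstacle is the bookkeeping of the three modular characters $\delta_{\Sp_4(\D)}$, $\delta_{H_{2,r}}$, and $\delta_N$ of the unipotent subgroups appearing in the Jacquet functors, since a slip would produce the wrong $\nu$-twist in condition (1) or (3); otherwise the argument is structurally identical to that of $\S$\ref{23} and to the $P_{1,3}$ case treated in Proposition \ref{32}, which is why the authors omit the proof.
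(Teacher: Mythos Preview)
Your proposal is correct and follows exactly the approach the paper indicates: the authors explicitly say the proof is ``an analysis similar to $\S$\ref{23} of these three subquotients'' and omit it, and your orbit-by-orbit reduction via Frobenius reciprocity and Proposition~1.9(b) of \cite{BZ} reproduces precisely the three conditions, matching in particular the (commented-out) case analysis in the paper's source. The only point to watch, as you yourself note, is the modular-character bookkeeping producing the twists $\nu$ in (1) and $\nu^{-1}$ in (3), but your identifications of $H_{2,r}$ and the relevant unipotents are correct.
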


	\subsubsection{}\label{37}  If $\pi = \pi_1 \times \pi_2$ has an $\Sp_4(\D)$-distinguished quotient, then $\pi$ itself is $\Sp_4(\D)$-distinguished. 
	If either of $\pi_1$ or $\pi_2$ is supercuspidal, then by  Proposition \ref{36} and Lemma \ref{13}, the representation $\pi = \pi_1 \times \pi_2$ has a symplectic period arising from closed orbit if and only if $\pi_1 \simeq \nu \pi_2$.
	
	If $\pi = \nu \pi_2 \times \pi_2$, where $\pi_2$ is an irreducible supercuspidal representation of $\GL_2(\D)$, then $\pi$ has a symplectic period arising from the closed orbit. By Theorem \ref{7}, the representation $\pi$ has a unique irreducible subrepresentation $L([\pi_2, \nu \pi_2])$ and a unique irreducible quotient $L([\nu \pi_2],[ \pi_2])$. The contragredient  of $L([\pi_2, \nu \pi_2])$ is equal to $L([\nu^{-1} \tilde{\pi_2} ,   \tilde{\pi_2}])$. It is a quotient of the representation $\nu^{-1}  \tilde{\pi_2} \times  \tilde{\pi_2}$ which does not have a symplectic period by Proposition \ref{36}. It implies that the representation $L([\nu^{-1}  \tilde{\pi_2} ,   \tilde{\pi_2}])$ also does not have a symplectic period. Hence, by Lemma \ref{10} we conclude that the  representation $L([\pi_2 , \nu \pi_2])$ does not have a symplectic period. Thus, the  representation $\theta = L([\nu \pi_2],[ \pi_2])  \simeq Z([\pi_2 , \nu\pi_2])$ has a symplectic period by Lemma \ref{9}.
	\subsection{Representations of $\GL_4(\D)$ with a symplectic period}
	\subsubsection{}\label{39}
	By $\S$\ref{35}, $\S$\ref{37}, and Proposition \ref{36}, any irreducible  $\Sp_4(\D)$-distinguished representation occurs as a quotient of one of the representations listed in the following proposition. 
	
	\begin{prop}\label{40}
		
		Let $\theta$ be an irreducible admissible representation of $\GL_4(\D)$ with a symplectic period. Then either $\theta = Z([\pi_2,\nu \pi_2])$, where $\pi_2$ is a supercuspidal representation of $\GL_2(\D)$ or $\theta = \lambda_1 \times \lambda_2$, where $\lambda_1 $ is a character of $\GL_1(\D)$ and $\lambda_2$ is an irreducible supercuspidal representation of $\GL_3(\D)$ with a symplectic period, or it occurs as a quotient of one of the following representations $\pi$ of $\GL_4(\D)$:
		\begin{enumerate}
			\item[\upshape(1)]  $\pi = \pi_1 \times \nu^{-1} \pi_1$, where $\pi_1$ is a non-supercuspidal irreducible representation of $\GL_2(\D)$.
			
			\item[\upshape(2)]  $\pi = \pi_1 \times \pi_2$, where $\pi_1$, $\pi_2$ are either an irreducible principal series $\sigma_1 \times \sigma_2$ with $\Dim (\sigma_1 \otimes \sigma_2) = 1$ or a character or a Speh representation.
			
			\item[\upshape(3)]  $\pi = \pi_1 \times \pi_2$ such that $\Hom_{M_{2,1}}(\nu^{-1} [r_{(1,1),(2)} (\pi_1) \otimes r_{(1,1),(2)} (\pi_2)] ,  \mathbb{C} ) \not= 0$, 
			where $\pi_1 , \pi_2$ are non-supercuspidal irreducible representations of $\GL_2(\D)$.
		\end{enumerate}
	\end{prop}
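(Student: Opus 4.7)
The plan is to combine Lemma \ref{29} with Proposition \ref{36} and the case analyses carried out in $\S\ref{35}$ and $\S\ref{37}$. By Lemma \ref{29}, any irreducible $\Sp_4(\D)$-distinguished representation $\theta$ is either of the form $\lambda_1 \times \lambda_2$ with $\lambda_1 \in \Irr(\GL_1(\D))$ and $\lambda_2$ an irreducible supercuspidal representation of $\GL_3(\D)$, or it appears as a quotient of some $\pi_1 \times \pi_2$ with $\pi_1, \pi_2 \in \Irr(\GL_2(\D))$. The first alternative is settled in $\S\ref{35}$, which forces $\lambda_1$ to be a character and $\lambda_2$ to carry a symplectic period, giving exactly the second exceptional case listed in the proposition.

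For the second alternative, I would apply Proposition \ref{36} to $\pi = \pi_1 \times \pi_2$, obtaining one of three orbit conditions. Consider first the closed-orbit condition $\pi_1 \simeq \nu \pi_2$. Since the cuspidal support is preserved under an unramified twist, either both $\pi_1$ and $\pi_2$ are supercuspidal or neither is. The supercuspidal situation has been treated in $\S\ref{37}$, yielding the irreducible representation $Z([\pi_2, \nu \pi_2])$ (the first exceptional case). The non-supercuspidal situation directly yields $\pi = \pi_1 \times \nu^{-1}\pi_1$, which is item (1) in the proposition.

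Next, if the open-orbit condition holds, then $\pi_1$ and $\pi_2$ are both $\Sp_2(\D)$-distinguished. I would invoke Lemma \ref{13}, according to which every $\Sp_2(\D)$-distinguished irreducible representation of $\GL_2(\D)$ is a character, an irreducible principal series $\sigma_1 \times \sigma_2$ with $\Dim(\sigma_1 \otimes \sigma_2) = 1$, or a Speh representation $\Sp_2(\mu)$; this is precisely item (2). Finally, if the third (intermediate-orbit) condition
\[
\Hom_{M_{2,1}}(\nu^{-1}[r_{(1,1),(2)}(\pi_1) \otimes r_{(1,1),(2)}(\pi_2)], \mathbb{C}) \neq 0
\]
holds, then neither $\pi_i$ can be supercuspidal, since the Jacquet module $r_{(1,1),(2)}$ vanishes on supercuspidals of $\GL_2(\D)$; this yields item (3).

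The only real obstacle is bookkeeping: ensuring that the three conditions of Proposition \ref{36} together with the two alternatives of Lemma \ref{29} genuinely exhaust all possibilities, and that each reduction has been properly funneled through $\S\ref{35}$ or $\S\ref{37}$ in the supercuspidal corner cases. The argument is otherwise a direct assembly of the prior material, with no new estimate or computation beyond recognizing that condition (1) of Proposition \ref{36} forces $\pi_1, \pi_2$ to share their supercuspidal support and that condition (3) excludes supercuspidals on Jacquet-module grounds.
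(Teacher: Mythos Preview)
Your proposal is correct and follows essentially the same approach as the paper, which states (in the sentence immediately preceding the proposition) that the result follows by combining $\S\ref{35}$, $\S\ref{37}$, and Proposition \ref{36}. You have simply spelled out in detail how these ingredients fit together via Lemma \ref{29} and Lemma \ref{13}, which is exactly the intended argument.
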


	\subsubsection{}\label{41}
	By Proposition \ref{40}, it is enough to consider and  check  all irreducible quotients or subquotients (up to permutation) of each representation in the following list:
	\begin{enumerate}
		\item   $\pi = \sigma_1 \times \sigma_2 \times \sigma_1' \times \sigma_2'$, where $\sigma_1 \times \sigma_2$ , $\sigma_1' \times \sigma_2'$ are irreducible principal series representations with $\Dim (\sigma_1 \otimes \sigma_2) = 1 = \Dim (\sigma_1' \otimes \sigma_2')$.
		
		\item $\pi = \sigma_1 \times \sigma_2 \times \chi_2$, where $\sigma_1 \times \sigma_2$ is an irreducible principal series representation of $\GL_2(\D)$ with $\Dim (\sigma_1 \otimes \sigma_2) = 1$, and $\chi_2$ is a character of $\GL_2(\D)$.
		
		\item $\pi = \sigma_1 \times \sigma_2 \times \Sp_2(\mu)$, where  $\sigma_1 \times \sigma_2$ is an irreducible principal series representation of $\GL_2(\D)$ with $\Dim (\sigma_1 \otimes \sigma_2) = 1$, and $\Sp_2(\mu)$ is the Speh representation of $\GL_2(\D)$.
		
		\item $\pi = \chi_2 \times \Sp_2(\mu)$, where $\chi_2$ and $\Sp_2(\mu)$ are character and the Speh representation of $\GL_2(\D)$, respectively.
		
		\item $\pi = \chi_2 \times \chi_2'$, where $\chi_2 , \chi_2'$ are characters of $\GL_2(\D)$.

		\item  $\pi = \Sp_2(\mu_2) \times \Sp_2(\mu_2')$, where $\Sp_2(\mu_2) , \Sp_2(\mu_2')$ are Speh representations of $\GL_2(\D)$.
		
		\item  $\pi = \nu^{1/2}\mu \times \chi_1 \times \chi_1' \times \nu^{-1/2}\mu$, where $\chi_1 , \chi_1'$ are characters of $\GL_1(\D)$ and $\mu$ is an irreducible representation of $\GL_1(\D)$ with $\Dim(\mu)>1$.
		
		\item $\pi = \sigma_1 \times \chi_1 \times \chi_2 St_2$, where $\chi_2St_2$ is a quotient of  $\chi_1\nu^{-1} \times \chi_1\nu$ and $\sigma_1 \times \chi_1$ is an irreducible principal series representation of $\GL_2(\D)$ with $\Dim(\sigma_1 \otimes \chi_1) = 1$.
		
		\item $\pi = \chi_2 \times \nu^{-1}\chi_2St_2$, where $\chi_2$,  $\chi_2 St_2$ are subrepresentation and quotient of  $\chi_1\nu^{-1} \times \chi_1\nu$;  $\chi_1$ is a character of $\GL_1(\D)$.
		
		\item $\pi = \chi_2St_2 \times \nu\chi_2St_2$, where $\chi_2 St_2$ is quotient of $\chi_1\nu^{-1} \times \chi_1\nu$;  $\chi_1$ is a character of $\GL_1(\D)$.
		
		\item $\pi = \pi_1 \times \nu^{-1}\pi_1$, where $\pi_1$ is either a  generalized Steinberg representation $St_2(\mu)$ of $\GL_2(\D)$ or an irreducible principal series representation $\sigma_1 \times \sigma_2$ of $\GL_2(\D)$ with $\Dim (\sigma_1 )$, $ \Dim (\sigma_2) > 1$.
	\end{enumerate}
	
	Now, we consider each case individually and find all distinguished subquotients which prove Theorem \ref{3}.\\
	\textbf{Case 1:} If  $\pi = \sigma_1 \times \sigma_2 \times \sigma_1' \times \sigma_2'$, where $\sigma_1 \times \sigma_2$, $\sigma_1' \times \sigma_2'$ are irreducible principal series with $\Dim (\sigma_1 \otimes \sigma_2) = 1 = \Dim (\sigma_1' \otimes \sigma_2')$, then we have two conditions for the distinction of the representation $\pi$ by Proposition \ref{32}.
	If either $\sigma_1 \simeq \sigma_2' \otimes \nu$ or $\sigma_1 \simeq \sigma_1' \otimes \nu$ or $\sigma_1 \simeq \sigma_2 \otimes \nu$, then $\pi$ has a symplectic period arising from the closed orbit.
	If neither $\sigma_1 \simeq \sigma_2' \otimes \nu$ nor $\sigma_1 \simeq \sigma_1' \otimes \nu$ nor $\sigma_1 \simeq \sigma_2 \otimes \nu$, then by Theorem \ref{34} and Lemma \ref{11}, $$\Ext_{\Sp_4(\D)}^1(\Ind_{H_{1,1}}^{\Sp_4(\D)}\nu^4[\sigma_1 \otimes (\sigma_2 \times \sigma_1' \times \sigma_2')]|_{M_{1,1}} , \mathbb{C}) = 0.$$ Therefore,  $\pi$ has a $\Sp_4(\D)$-invariant linear form arising from the open orbit by Proposition \ref{hariom}.
	Now, we analyze the presence of the symplectic period in subquotients of $\pi$ by dividing it into five sub-cases.
	
	(1) If none of the pairs are linked, then $\pi$ is irreducible and $\Sp_4(\D)$-distinguished.
	
	(2) If there is precisely one linked pair among four, say $\sigma_2$, $\sigma_1'$, then $\pi$ is of the form  
	$$\sigma_1 \times \chi_1 \nu_1^{-1/2} \times \chi_1 \nu_1^{1/2} \times \sigma_2'.$$ 
	The representation $\pi$ is glued from two irreducible representations $\theta_1 = \sigma_1 \times \chi_2 \times \sigma_2'$ and $\theta_2 = \sigma_1 \times \chi_2 St_2 \times \sigma_2'$.
	Proposition \ref{32} implies that $\theta_1$ is $\Sp_4(\D)$-distinguished, and $\theta_2$ is $\Sp_4(\D)$-distinguished if and only if either $\sigma_1 \simeq \chi_1$ or $\sigma_2' \simeq \chi_1$.
	
	(3) If there are precisely two linked pairs among four, then $\pi$ is either
	\begin{itemize}
		\item[\upshape(i)] $\pi_1 = \sigma_1 \times \nu_1^{-1/2} \chi_1 \times \nu_1^{1/2} \chi_1 \times \nu_1^{1/2} \chi_1$ or\vspace{2mm}
		\item[\upshape(ii)] $\pi_2 = \sigma_1 \times \nu_1^{1/2} \chi_1  \times \nu_1^{-1/2} \chi_1  \times \nu_1^{-1/2} \chi_1$ or\vspace{2mm}
		\item[\upshape(iii)] $\pi_3 = \sigma_1 \times \nu_1^{1/2} \chi_1  \times \nu_1^{-1/2} \chi_1  \times \nu_1^{3/2} \chi_1.$
	\end{itemize}
	If $\pi$ is either $\pi_1$ or $\pi_2$, then no  new $\Sp_4(\D)$-distinguished subquotients are obtained.
	If $\pi = \pi_3 = \sigma_1 \times \nu_1^{1/2} \chi_1  \times \nu_1^{-1/2} \chi_1  \times \nu_1^{3/2} \chi_1$, then it has four irreducible subquotients 
	
	\begin{itemize}
		\item $\theta_1 = \sigma_1 \times L([\nu_1^{-1/2} \chi_1, \nu_1^{3/2} \chi_1])$,\vspace{2mm}
		
		\item  $\theta_2 = \sigma_1 \times L([\nu_1^{-1/2} \chi_1,\nu_1^{1/2} \chi_1],[\nu_1^{3/2} \chi_1]),$ \vspace{2mm}
		
		\item  $\theta_3 = \sigma_1 \times L([\nu_1^{1/2} \chi_1,\nu_1^{3/2} \chi_1],[\nu_1^{-1/2} \chi_1]),$\vspace{2mm}
		
		\item  $\theta_4 = \sigma_1 \times L([\nu_1^{1/2} \chi_1],[\nu_1^{-1/2} \chi_1],[\nu_1^{3/2} \chi_1]).$
	\end{itemize}

	It follows from Proposition \ref{32} that $\theta_1$ has no symplectic period, and $\theta_4$ has a  symplectic period. Again by Proposition \ref{32},
	$\theta_2$ has a symplectic period arising from  the closed orbit if and only if $\sigma_1 \simeq \chi_1$, and 
	$\theta_3$ has a symplectic period arising from the closed orbit if and only if $\sigma_1 \simeq  \nu^2\chi_1$.
	
	(4) If there are precisely three linked pairs among four, then $\pi$  is either
	
	\begin{itemize}
		\item[\upshape(i)] $\pi_1 = \nu_1^{-1/2} \chi_1 \times \nu_1^{3/2} \chi_1 \times \nu_1^{-3/2} \chi_1 \times \nu_1^{1/2} \chi_1$ or\vspace{2mm}
		\item[\upshape(ii)] $\pi_2 = \nu_1^{-3/2} \chi_1 \times \nu_1^{1/2} \chi_1 \times \nu_1^{-1/2} \chi_1 \times \nu_1^{3/2} \chi_1$.
	\end{itemize}

	\noindent
	If $\pi =\pi_1$, then it is isomorphic to $\nu_1^{-1/2} \chi_1 \times \nu_1^{-3/2} \chi_1 \times \nu_1^{3/2} \chi_1 \times \nu_1^{1/2} \chi_1$. This representation has $\tau_1 = Z([\nu_1^{-3/2} \chi_1 , \nu_1^{-1/2} \chi_1]) \times Z([\nu_1^{1/2} \chi_1 , \nu_1^{3/2} \chi_1])$ as a quotient and so any  irreducible $\Sp_4(\D)$-distinguished  quotient $\theta$ of $\pi_1$ is a quotient of $\tau_1$. Therefore, we get
	$$\theta =  Z([\nu_1^{-3/2} \chi_1 , \nu_1^{-1/2} \chi_1], [\nu_1^{1/2} \chi_1 , \nu_1^{3/2} \chi_1]).$$
	By Proposition \ref{36} and using a similar argument as in the Case $4$ in $\S$\ref{28}, $\theta$ is not $\Sp_4(\D)$-distinguished.
	
	If $\pi = \pi_2$, then it has $\tau_2 = Z([\nu_1^{1/2} \chi_1], [\nu_1^{3/2} \chi_1]) \times Z([\nu_1^{-3/2} \chi_1] , [\nu_1^{-1/2} \chi_1])$  as a quotient and so any irreducible $\Sp_4(\D)$-distinguished  quotient $\theta$ of $\pi_2$ is a quotient of $\tau_2$. By Proposition \ref{36},  the representation $\tau_2$ is not $\Sp_4(\D)$-distinguished. So, $\theta$ is also not $\Sp_4(\D)$-distinguished.
	
	(5) If four pairs are linked, then up to permutation $\pi$ is  one of the following representations: 
	\begin{itemize}
		\item[\upshape(i)] $\pi_1 = \nu_1^{1/2}\chi_1 \times \nu_1^{1/2} \chi_1 \times \nu_1^{-1/2} \chi_1   \times \nu_1^{-1/2} \chi_1$,\vspace{2mm}
		\item[\upshape(ii)] $\pi_2 = \nu_1^{-1/2} \chi_1 \times \nu_1^{3/2} \chi_1 \times \nu_1^{1/2} \chi_1 \times \nu_1^{1/2} \chi_1 $.
	\end{itemize}
	
	If $\pi = \pi_1$, then it has three irreducible  subquotients
	\begin{itemize}
		\item $\theta_1 = L([\nu_1^{-1/2} \chi_1 , \nu_1^{1/2} \chi_1],[\nu_1^{-1/2} \chi_1 , \nu_1^{1/2} \chi_1]),$\vspace{2mm}
		\item $\theta_2 = L([\nu_1^{1/2}\chi_1],[\nu_1^{-1/2}\chi_1,\nu_1^{1/2} \chi_1],[\nu_1^{-1/2} \chi_1])$, \vspace{2mm}
		\item $\theta_3 = L([\nu_1^{1/2}\chi_1 ],[\nu_1^{1/2}\chi_1],[\nu_1^{-1/2}\chi_1],[\nu_1^{-1/2} \chi_1])$.
	\end{itemize}
	
	\noindent  
	By  Proposition \ref{36}, we can conclude  that  $\theta_1 \simeq L([\nu_1^{-1/2} \chi_1 , \nu_1^{1/2} \chi_1]) \times L([\nu_1^{-1/2} \chi_1 , \nu_1^{1/2} \chi_1])$ does not have a symplectic period, and $\theta_3 \simeq Z([\nu_1^{-1/2}\chi_1,\nu_1^{1/2} \chi_1]) \times Z([\nu_1^{-1/2}\chi_1,\nu_1^{1/2} \chi_1])$  has a symplectic period. 
	The representation $\theta_2$ is the unique irreducible quotient of $\nu_1^{1/2}\chi_1 \times \nu_1^{-1/2} \chi_1 \times L([\nu_1^{-1/2}\chi_1,\nu_1^{1/2} \chi_1])$. Since $L([\nu_1^{1/2}\chi_1],[\nu_1^{-1/2}\chi_1])$ is a quotient of $\nu_1^{1/2}\chi_1 \times \nu_1^{-1/2}\chi_1$, $\theta_2$ is also the unique irreducible quotient of $\tau = L([\nu_1^{1/2}\chi_1],[\nu_1^{-1/2}\chi_1]) \times L([\nu_1^{-1/2}\chi_1,\nu_1^{1/2} \chi_1])$. The representation $\tau$ does not have a symplectic period by Proposition \ref{36}. It implies that $\theta_2$ has no symplectic period.

	If $\pi = \pi_2$, then it has six irreducible subquotients. By using a similar argument as above, we can conclude that one of the subquotient $$L([\nu_1^{-1/2} \chi_1],[\nu_1^{3/2} \chi_1],[\nu_1^{1/2} \chi_1],[\nu_1^{1/2} \chi_1])$$ is $\Sp_4(\D)$-distinguished and others are not distinguished.\\
	\textbf{Case 2:}~ If $\pi = \sigma_1 \times \sigma_2 \times \chi_2$, where $\sigma_1 \times \sigma_2$ is an irreducible principal series with $\Dim (\sigma_1 \otimes \sigma_2) = 1$, and $\chi_2$ is a character of $\GL_2(\D)$, then  we have two conditions for the distinction of $\pi$ by Proposition \ref{32}.
	If either $\sigma_1 \simeq \sigma_2 \otimes \nu$ or $\sigma_1 \simeq \nu^2 \chi_1$, then $\pi$ has a symplectic period arising from the closed orbit.
	If neither  $\sigma_1 \simeq \sigma_2 \otimes \nu$ nor $\sigma_1 \simeq \nu^2 \chi_1$, then by Theorem \ref{34} and Lemma \ref{11}, $$\Ext_{\Sp_4(\D)}^1(\Ind_{P_{H}}^{\Sp_4(\D)}\nu^4[\sigma_1 \otimes (\sigma_2 \times \chi_2)]|_{M_{H}} , \mathbb{C}) = 0.$$  Hence, it follows from Proposition \ref{hariom} that $\pi$ has a symplectic period arising from the open orbit.
	Now, we analyze the subquotients of $\pi$ by dividing it into three sub-cases, whether or not they have a symplectic period.
	
	(1) If none of the pairs are linked, then $\pi$ is irreducible and has a symplectic period.
	
	(2) If there is precisely one linked pair among three, then assume $\sigma_2 = \nu_1^{3/2} \chi_1$ without loss of generality. The representation $\pi$ is glued from two irreducible subquotients $\theta_1 = \sigma_1 \times Z([\nu_1^{3/2} \chi_1],[\nu_1^{-1/2} \chi_1 , \nu_1^{1/2} \chi_1])$ and $\theta_2 = \sigma_1 \times Z([\nu_1^{-1/2} \chi_1 , \nu_1^{3/2} \chi_1])$.
	Proposition \ref{32} implies that $\theta_2$ has a symplectic period, and $\theta_1$ has a symplectic period if and only if $\sigma_1 \simeq \nu_1 \chi_1$.
	
	(3) If there are precisely two linked pairs among three, then $\pi$ is either
	
	\begin{itemize}
		\item[\upshape(i)] $\pi_1 = \nu_1^{3/2}\chi_1 \times \nu_1^{3/2}\chi_1 \times Z([\nu_1^{-1/2} \chi_1 , \nu_1^{1/2} \chi_1])$ or \vspace{2mm}
		\item[\upshape(ii)]  $\pi_2 = \nu_1^{-3/2}\chi_1 \times \nu_1^{-3/2}\chi_1 \times  Z([\nu_1^{-1/2} \chi_1 , \nu_1^{1/2} \chi_1])$ or \vspace{2mm}
		\item[\upshape(iii)]  $\pi_3 = \nu_1^{3/2}\chi_1 \times \nu_1^{-3/2}\chi_1 \times Z([\nu_1^{-1/2} \chi_1 , \nu_1^{1/2} \chi_1])$.
	\end{itemize}
	
	If $\pi$ is  either $\pi_1$ or $\pi_2$,  then no new $\Sp_4(\D)$-distinguished subquotient is obtained. If $\pi = \pi_3$, then it has four irreducible subquotients
	
	\begin{itemize}
		\item $\theta_1 = Z([\nu_1^{-3/2}\chi_1 , \nu_1^{3/2}\chi_1])$,\vspace{2mm}
		
		\item $\theta_2 = Z([\nu_1^{-3/2}\chi_1], [\nu_1^{-1/2} \chi_1 , \nu_1^{3/2}\chi_1])$,\vspace{2mm}

		\item $\theta_3 = Z([\nu_1^{3/2}\chi_1],[\nu_1^{-3/2}\chi_1 , \nu_1^{1/2} \chi_1])$,\vspace{2mm}

		\item $\theta_4 = Z([\nu_1^{3/2}\chi_1],[\nu_1^{-3/2}\chi_1],[\nu_1^{-1/2} \chi_1 , \nu_1^{1/2} \chi_1])$.
	\end{itemize}
	
	The representation $\tau_1 = \nu_1^{-3/2}\chi_1 \times Z([\nu_1^{-1/2} \chi_1 , \nu_1^{3/2}\chi_1])$ is glued from two irreducible representations $\theta_1$ and $\theta_2$. The representation $\theta_1$ is a character of $\GL_4(\D)$ and has a symplectic period.
	By Proposition \ref{32}, the representation $\tau_1$ has a symplectic period and  $\Dim \Hom_{\Sp_4(\D)}(\tau_1|_{\Sp_4(\D)} , \mathbb{C}) = 1$. It implies that the  representation $\theta_2$ does not have a symplectic period, and by taking contragradient we conclude that neither does $\theta_3$.
	
	The representation $\theta_4$  is the unique irreducible quotient of $\nu_1^{-3/2}\chi_1 \times Z ([\nu_1^{-1/2} \chi_1 , \nu_1^{1/2} \chi_1]) \times \nu_1^{3/2}\chi_1$. Since  $Z([\nu_1^{-1/2} \chi_1 , \nu_1^{1/2} \chi_1],[\nu_1^{3/2}\chi_1])$ is  a quotient of $Z( [\nu_1^{-1/2} \chi_1 , \nu_1^{1/2} \chi_1]) \times \nu_1^{3/2}\chi_1$, $\theta_4$ is also a unique irreducible quotient of $\tau_2 = \nu_1^{-3/2}\chi_1 \times Z([\nu_1^{-1/2} \chi_1 , \nu_1^{1/2} \chi_1],[\nu_1^{3/2}\chi_1])$. Proposition \ref{32} implies that the representation $\tau_2$ does not have a symplectic period. Hence, $\theta_4$ does not have a symplectic period.\\
	\textbf{Case 3:} If $\pi = \sigma_1 \times \sigma_2 \times \Sp_2(\mu)$, where $\sigma_1 \times \sigma_2$ is an irreducible principal series representation with $\Dim(\sigma_1 \otimes \sigma_2) = 1$,  and $\Sp_2(\mu)$ is the Speh representation of $\GL_2(\D)$, then we have two conditions for the distinction of the representation $\pi$ by Proposition \ref{32}.
	If  $\sigma_1 \simeq \sigma_2 \otimes \nu$, then the irreducible representation $\pi$ has a symplectic period arising from the closed orbit.
	If  $\sigma_1 \not\simeq \sigma_2 \otimes \nu$, then the closed orbit does not contribute in $\Sp_4(\D)$-invariant linear form. However,  the open orbit does. In this case, Theorem \ref{34} and Lemma \ref{11} suggest that $$\Ext_{\Sp_4(\D)}^1(\Ind_{P_{H}}^{\Sp_4(\D)}\nu^4[\sigma_1 \otimes (\sigma_2 \times \Sp_2(\mu))]|_{M_{H}} , \mathbb{C}) = 0.$$  So, the irreducible representation $\pi$ has a symplectic period by Proposition \ref{hariom}.\\ 
	\textbf{Case 4:} If $\pi =  \chi_2 \times \Sp_2(\mu)$, where $\chi_2$ is a character of $\GL_2(\D)$ and $\Sp_2(\mu)$ is the Speh representation of $\GL_2(\D)$, then it is an irreducible subquotient of the representation $\tau = \nu^{-1}\chi_1 \times \nu\chi_1 \times \nu^{1/2}\mu \times \nu^{-1/2}\mu$.  The representation $\tau$ has four irreducible subquotients $\chi_2 \times \Sp_2(\mu)$, $\chi_2 \times St_2(\mu)$, $\chi_2 St_2 \times \Sp_2(\mu)$, and $\chi_2 St_2 \times St_2(\mu)$. By Proposition \ref{32}, $\tau$ has a symplectic period arising from the open orbit and the representations  $\chi_2 \times St_2(\mu)$, $\chi_2 St_2 \times \Sp_2(\mu)$, and $\chi_2 St_2 \times St_2(\mu)$ have no symplectic period by Proposition \ref{36}. Therefore, by Lemma \ref{9} we deduce that the representation $\chi_2 \times \Sp_2(\mu)$ has a symplectic period. \\
	\textbf{Case 5:} If $\pi = \chi_2 \times \chi_2' $, where $\chi_2 , \chi_2'$ are characters of $\GL_2(\D)$, then it is an irreducible subquotient of the representation $\tau = \nu^{-1}\chi_1 \times \nu\chi_1 \times \nu^{-1}\chi_1' \times \nu\chi_1'$. The representation $\tau$ has four irreducible subquotients $\chi_2 \times \chi_2'$, $\chi_2 \times \chi_2'St_2$, $\chi_2St_2 \times \chi_2'$, and $\chi_2St_2 \times \chi_2'St_2$. Now, we have two conditions for the distinction of the representation $\pi$. 
	
	If either $\chi_1 \simeq \chi_1' \nu^{-1}$ or $\chi_1 \simeq \chi_1' \nu$, then by taking contragredient and by  Proposition \ref{36}, the representation $\pi$ has a symplectic period arising from the closed orbit.
	If neither $\chi_1 \simeq \chi_1' \nu^{-1}$  nor  $\chi_1 \simeq \chi_1' \nu$, then by  Proposition \ref{36},  the representations $\chi_2 \times \chi_2'St_2$, $\chi_2St_2 \times \chi_2'$, and $\chi_2St_2 \times \chi_2'St_2$ have no symplectic period and the representation $\tau$ has a symplectic period by  Proposition \ref{32}. Hence, by Lemma \ref{9} we conclude that $\pi$ has a symplectic period.

	Assume $\chi_2 = Z([\chi_1\nu_1^{-1/2} , \chi_1\nu_1^{1/2}])$ and $\chi_2' = Z([\chi_1'\nu_1^{-1/2} , \chi_1'\nu_1^{1/2}])$, where $\chi_1$ and $\chi_1'$ are characters of $\GL_1(\D)$. If $\pi$ is irreducible, then it has a symplectic period.
	Otherwise, we have the following two sub-cases: 
	
	(1) If $\chi_1 = \chi_1' \nu_1$, then the representation $\pi = Z([\chi_1'\nu_1^{1/2} , \chi_1'\nu_1^{3/2}]) \times Z([\chi_1'\nu_1^{-1/2} , \chi_1'\nu_1^{1/2}])$ \linebreak is glued from two irreducible representations  $\theta_1 = Z([\chi_1'\nu_1^{1/2} , \chi_1'\nu_1^{3/2}] , [\chi_1'\nu_1^{-1/2} , \chi_1'\nu_1^{1/2}])$ and \linebreak $\theta_2 = Z([\chi_1'\nu_1^{-1/2} , \chi_1'\nu_1^{3/2}]) \times \chi_1'\nu_1^{1/2}$. By Proposition \ref{32}, $\theta_2$ has a symplectic period and by Proposition \ref{36}, $\Dim \Hom_{\Sp_4(\D)}(\pi|_{\Sp_4(\D)} , \mathbb{C}) = 1$. It implies that $\theta_1$ has no symplectic period.
	
	(2) If $\chi_1 = \chi_1' \nu_1^2$, then the representation $\pi = Z([\chi_1'\nu_1^{3/2} , \chi_1'\nu_1^{5/2}]) \times Z([\chi_1'\nu_1^{-1/2} , \chi_1'\nu_1^{1/2} ])$  is glued from two irreducible representations  $\theta_1 = Z([\chi_1'\nu_1^{3/2} , \chi_1'\nu_1^{5/2}] , [\chi_1'\nu_1^{-1/2} , \chi_1'\nu_1^{1/2}])$ and  $\theta_2 = Z([\chi_1'\nu_1^{-1/2} , \chi_1'\nu_1^{5/2}])$. The  representation $\theta_2$ is a unitary character of $\GL_4(\D)$ and has a symplectic period. We obtain by  Proposition \ref{36} that $\Dim \Hom_{\Sp_4(\D)}(\pi|_{\Sp_4(\D)} , \mathbb{C}) = 1$. Therefore, $\theta_1$ has no symplectic period.\\  
	\textbf{Case 6:}  If $\pi = \Sp_2(\mu_1) \times \Sp_2(\mu_1')$, where $\Sp_2(\mu_1) , \Sp_2(\mu_1')$ are Speh representations of $\GL_2(\D)$, then it is an irreducible subquotient of the representation $\tau = \nu^{1/2}\mu_1 \times \nu^{-1/2}\mu_1 \times \nu^{1/2}\mu_1' \times \nu^{-1/2}\mu_1'$. The representation $\tau$ has four irreducible subquotients $\Sp_2(\mu_1) \times \Sp_2(\mu_1')$, $\Sp_2(\mu_1) \times St_2(\mu_1')$, $St_2(\mu_1) \times \Sp_2(\mu_1')$, and $St_2(\mu_1) \times St_2(\mu_1')$. Now, we have two conditions for the distinction of the representation $\pi$.
	
	If $\mu_1 \simeq \mu_1' \nu$, then  by Proposition \ref{36} the representation $\pi$ has an $\Sp_4(\D)$-invariant linear form arising from the closed orbit.
	If $\mu_1 \not\simeq \mu_1' \nu$, then by Proposition \ref{36},   the representations $\Sp_2(\mu_1) \times St_2(\mu_1')$, $St_2(\mu_1) \times \Sp_2(\mu_1')$, and $St_2(\mu_1) \times St_2(\mu_1')$ has no symplectic period, and by Proposition \ref{32} the representation $\tau$ has a symplectic period arising from the closed orbit. Therefore,  $\pi$ has a symplectic period by Lemma \ref{9}.

	Assume $\Sp_2(\mu_1) = Z([\mu_1\nu^{-1/2} , \mu_1\nu^{1/2}])$ and $\Sp_2(\mu_1') = Z([\mu_1'\nu^{-1/2} , \mu_1'\nu^{1/2}])$, where $\mu_1 , \mu_1'$ are representations of $\GL_1(\D)$ with $\Dim(\mu_1 ), \Dim(\mu_1' ) > 1 $. If $\pi$ is irreducible, then it has a symplectic period. Otherwise, the following two sub-cases arise:
	
	(1) If $\mu_1 = \mu_1' \nu$,	then $\pi = Z([\mu_1'\nu^{1/2} , \mu_1'\nu^{3/2}]) \times Z([\mu_1'\nu^{-1/2} , \mu_1'\nu^{1/2}])$ is glued from two irreducible representations $\theta_1 = Z([\mu_1'\nu^{1/2} , \mu_1'\nu^{3/2}] , [\mu_1'\nu^{-1/2} , \mu_1'\nu^{1/2}])$ and $\theta_2 = Z([\mu_1'\nu^{-1/2} , \mu_1'\nu^{3/2}]$ $) \times \mu_1'\nu^{1/2}.$ Proposition \ref{32} implies that the representation $\theta_2$ does not have a symplectic period. Hence, $\theta_1$ has a symplectic period by Lemma \ref{9}.
	
	(2) If $\mu_1 = \mu_1' \nu^2$,	then $\pi = Z([\mu_1'\nu^{3/2} , \mu_1'\nu^{5/2}]) \times Z([\mu_1'\nu^{-1/2} , \mu_1'\nu^{1/2}])$ is glued from two irreducible representations $\theta_1 = Z([\mu_1'\nu^{3/2} , \mu_1'\nu^{5/2}] , [\mu_1'\nu^{-1/2} , \mu_1'\nu^{1/2}])$ and $\theta_2 = Z([\mu_1'\nu^{-1/2} , \mu_1'\nu^{5/2}]$ $)$. By using the algorithm of Moeglin and Waldspurger \cite{badulescu2007zelevinsky}, we find that the representation $\theta_1$ is isomorphic to the representation $\tau = L([\mu_1'\nu^{5/2}] , [\mu_1'\nu^{1/2} ,  \mu_1'\nu^{3/2}] , [\mu_1'\nu^{-1/2}])$ which is a unique irreducible quotient of $\mu_1'\nu^{5/2} \times L([\mu_1'\nu^{1/2} , \mu_1'\nu^{3/2}]) \times \mu_1'\nu^{-1/2}$. Since the representation  $L([\mu_1'\nu^{1/2} , \mu_1'\nu^{3/2}],[\mu_1'\nu^{-1/2}])$ is a quotient of  $L([\mu_1'\nu^{1/2} , \mu_1'\nu^{3/2}]) \times \mu_1'\nu^{-1/2}$,  $\tau$ is the unique irreducible quotient of $\mu_1'\nu^{5/2} \times L([\mu_1'\nu^{1/2} , \mu_1'\nu^{3/2}],[\mu_1'\nu^{-1/2}])$. By Proposition \ref{32}, the representation $\mu_1'\nu^{5/2} \times L([\mu_1'\nu^{1/2} , \mu_1'\nu^{3/2}],[\mu_1'\nu^{-1/2}])$ does not have a symplectic period. So, $\tau \simeq \theta_1$ also does not have a symplectic period. Thus,  $\theta_2$ has a symplectic period by Lemma \ref{9}.\\
	\textbf{Case 7:} If  $\pi = \nu^{1/2}\mu \times \chi_1 \times \chi_1' \times \nu^{-1/2}\mu$, where $\chi_1 , \chi_1'$ are characters of $\GL_1(\D)$ and $\mu$ is an irreducible representation of $\GL_1(\D)$ with $\Dim(\mu) >1$, then we have two sub-cases:
	
	(1) If $\chi_1$ and $\chi_1'$ are not  linked, then $\pi$ has two irreducible subquotients $\theta_1 = \chi_1 \times \chi_1' \times \Sp_2(\mu)$ and $\theta_2 = \chi_1 \times \chi_1' \times St_2 (\mu)$. By Proposition \ref{32}, $\theta_1$ is $\Sp_4(\D)$-distinguished and 
	$\theta_2$ is not $\Sp_4(\D)$-distinguished.
	
	(2) If $\chi_1$ and $\chi_1'$ are linked, then $\pi$ has 4 irreducible subquotients $\chi_2 \times \Sp_2(\mu) $, $\chi_2 St_2 \times \Sp_2(\mu)$, $\chi_2 \times St_2 (\mu)$, and $\chi_2 St_2 \times St_2 (\mu)$.
	By  Proposition \ref{36}, the irreducible representations $\chi_2 St_2 \times \Sp_2(\mu)$, $\chi_2 \times St_2 (\mu)$, and $\chi_2 St_2 \times \times St_2 (\mu)$ are not $\Sp_4(\D)$-distinguished and the representation $\pi$ is $\Sp_4(\D)$-distinguished by  Proposition \ref{32}. Hence, by Lemma \ref{9} we deduce that the representation $\chi_2 \times \Sp_2(\mu)$ is $\Sp_4(\D)$-distinguished.\\ 
	\textbf{Case 8:} If $\pi = \sigma_1 \times \chi_1 \times \chi_2 St_2$, where $\sigma_1 \times \chi_1$ is an irreducible principal series with $\Dim(\sigma_1 \otimes \chi_1) = 1$, and $\chi_2St_2$ is a quotient of $\chi_1\nu^{-1} \times \chi_1\nu$, then by Proposition \ref{32} the  representation $\pi = \sigma_1 \times \chi_1 \times \chi_2 St_2$ has a symplectic period. If $\pi$ is irreducible, then it has a symplectic period. Otherwise,
	without loss of generality, it is glued from two irreducible subquotients $\theta_1 = \chi_1 \times L([\chi_1\nu_1^{-1/2},\chi_1\nu_1^{3/2}])$ and $\theta_2 = \chi_1 \times L([\chi_1\nu_1^{3/2}],[\chi_1\nu_1^{-1/2},\chi_1\nu_1^{1/2}])$.
	Proposition \ref{32} implies that $\theta_1$ does not have a symplectic period and $\theta_2$ does.\\ 
	\textbf{Case 9:} If $\pi = \chi_2 \times \nu^{-1}\chi_2St_2$, where $\chi_2$,  $\chi_2 St_2$ are subrepresentation and quotient of the representation $\chi_1\nu^{-1} \times \chi_1\nu$; $\chi_1$ is a character of $\GL_1(\D)$, then $\tilde{\pi} = \tilde{\chi_2} \times \nu \tilde{\chi_2} St_2$, which does not have a symplectic period  by  Proposition \ref{36}. Therefore, by Lemma \ref{10} we conclude that the irreducible representation $\pi$ has no symplectic period.\\ 
	\textbf{Case 10:} If $\pi = \chi_2St_2 \times \nu\chi_2St_2$, where $\chi_2 St_2$ is a quotient of $\chi_1\nu^{-1} \times \chi_1\nu$;  $\chi_1$ is a character of $\GL_1(\D)$, then by Proposition \ref{36} $\tilde{\pi} = \tilde{\chi_2} St_2 \times \nu^{-1} \tilde{\chi_2} St_2$ has a symplectic period arising from  the closed orbit . Hence, the irreducible representation $\pi$ has a symplectic period by Lemma \ref{10}.\\
	\textbf{Case 11:} Let $\pi = \pi_1 \times \nu^{-1}\pi_1$, where  $\pi_1$ is either generalized Steinberg $St_2(\mu)$ of $\GL_2(\D)$ or an irreducible principal series $\sigma_1 \times \sigma_2$  of $\GL_2(\D)$ with $\Dim(\sigma_1)$, $\Dim(\sigma_2) > 1$.
	
	If $\pi_1$ is generalized Steinberg $St_2(\mu)$, then  $\pi = L([\nu^{-1/2}\mu,\nu^{1/2}\mu]) \times L([\nu^{-3/2}\mu , \nu^{-1/2}\mu])$ is glued from  two irreducible representations $\theta_1 = L([\nu^{-3/2}\mu , \nu^{1/2}\mu])$ $\times \nu^{-1/2}\mu$ and  $\theta_2 =  L([\nu^{-1/2}\mu,\nu^{1/2}\mu] ,$ $[\nu^{-3/2}\mu , \nu^{-1/2}\mu])$. By Proposition \ref{32}, we deduce that the  representation $\theta_1$ has no symplectic period, and Proposition \ref{36} implies the presence of a symplectic  period in $\pi$  arising from the closed orbit. Thus, $\theta_2$ has a symplectic period by Lemma \ref{10}.
	
	If $\pi_1$ is an irreducible principal series $\sigma_1 \times \sigma_2$ with  $\Dim(\sigma_1 )$,  $\Dim(\sigma_2) > 1$, then $\pi = \sigma_1 \times \sigma_2 \times \nu^{-1} \sigma_1 \times \nu^{-1} \sigma_2$. As a result, two sub-cases arise:
	
	(1) If $\sigma_2 \times \nu^{-1} \sigma_1$ is irreducible, then we have two conditions.
	
	If $\sigma_2 \not= \sigma_1 \nu^2$, then $\pi \simeq \sigma_1 \times \nu^{-1} \sigma_1 \times \sigma_2 \times \nu^{-1} \sigma_2 $ has a unique irreducible quotient $\theta = Z([\nu^{-1}\sigma_1 , \sigma_1]) \times Z([\nu^{-1}\sigma_2 , \sigma_2])$. Using the similar argument as in above Case $6$, we deduce that $\theta$ carries a symplectic period.
	
	If $\sigma_2 = \sigma_1 \nu^2$, then $\pi = \sigma_2 \nu^{-2} \times \sigma_2 \times \sigma_2 \nu^{-3} \times \sigma_2 \nu^{-1} \simeq \sigma_2\nu^{-2} \times \sigma_2 \nu^{-3} \times \sigma_2 \times \sigma_2 \nu^{-1}$. This representation has $\tau = Z([\sigma_2 \nu^{-3} , \sigma_2\nu^{-2}]) \times Z([\sigma_2 \nu^{-1} , \sigma_2 ])$ as a quotient. Since the cuspidal support of $\pi$ is multiplicity free, it has a unique irreducible quotient, and so any irreducible $\Sp_4(\D)$-distinguished quotient of $\pi$ is a quotient of $\tau$. Thus, they have already been accounted for in the above Case $6$.
	
	(2) If $\sigma_2 \times \nu^{-1} \sigma_1$ is reducible, then this condition holds if and only if   either $\sigma_1 = \sigma_2 \nu^2$ or  $\sigma_1 = \sigma_2$. 
	If $\sigma_1 = \sigma_2 \nu^2$, then $\pi \simeq \sigma_2  \times \sigma_2 \nu^{-1} \times \sigma_2 \nu^2 \times  \sigma_2 \nu$. In this case, no new $\Sp_4(\D)$-distinguished subquotients is obtained.
	If $\sigma_1 = \sigma_2$, then $\pi \simeq \sigma_1 \times \sigma_1 \times \sigma_1 \nu^{-1} \times \sigma_1 \nu^{-1}$ has a unique irreducible quotient  $\theta = Z([ \sigma_1 \nu^{-1} , \sigma_1 ]) \times Z([\sigma_1 \nu^{-1} , \sigma_1 ])$. Using the similar argument as in above Case $6$, we get that the representation $\theta$ has a symplectic period.

	\subsection{Acknowledgments} We thank Dipendra Prasad for carefully reading certain proofs and valuable suggestions regarding the modulus characters. We also thank Mayank Singh for carefully reading the article. The first author acknowledges the research support from the ``Council of Scientific and Industrial Research (CSIR), Bharat (India)" with File No. 09/143(1005)/2019-EMR-I. The second author is partially supported by the SERB, MTR/2021/000655.

	\bibliography{bibliography}
	\bibliographystyle{alpha}

\end{document}